\documentclass[twoside, 10pt]{article}
\usepackage{amssymb, amsmath, mathrsfs, amsthm}
\usepackage{graphicx}
\usepackage{color}
\usepackage{float, caption, subcaption}
\usepackage[title]{appendix}

\DeclareGraphicsRule{*}{eps}{*}{}
\input{epsf}

\newcommand{\bd}{\begin{description}}
\newcommand{\ed}{\end{description}}
\newcommand{\bi}{\begin{itemize}}
\newcommand{\ei}{\end{itemize}}
\newcommand{\be}{\begin{enumerate}}
\newcommand{\ee}{\end{enumerate}}
\newcommand{\beq}{\begin{equation}}
\newcommand{\eeq}{\end{equation}}
\newcommand{\beqs}{\begin{eqnarray*}}
\newcommand{\eeqs}{\end{eqnarray*}}

\definecolor{DarkGreen}{rgb}{0.2, 0.6, 0.3}

\newcommand{\labelz}[1]{\label{#1}}


\newtheorem{theorem}{Theorem}
\newtheorem{conjecture}{Conjecture}

\newtheorem{lemma}{Lemma}

\newtheorem{case}{Case}

\newtheorem{claim}{Claim}
\newtheorem{fact}{Fact}
\newtheorem{proposition}{Proposition}

\setcounter{case}{0} \setcounter{claim}{0}

\begin{document}
\title{\textbf{Gallai-Ramsey numbers for fans} \footnote{Supported by the National Science Foundation of China (Nos. 11601254, 11551001, 11161037, 61763041, 11661068, and 11461054) and the Science Found of Qinghai Province (Nos.  2016-ZJ-948Q, and 2014-ZJ-907) and the  Qinghai Key Laboratory of Internet of Things Project (2017-ZJ-Y21).}
}

\author{
Yaping Mao\footnote{School of Mathematics and Statistics, Qinghai Normal University, Xining, Qinghai 810008, China. {\tt maoyaping@ymail.com}},
Zhao Wang\footnote{College of Science, China Jiliang University, Hangzhou 310018, China. {\tt wangzhao@mail.bnu.edu.cn}},
Colton Magnant\footnote{Department of Mathematics, Clayton State University, Morrow, GA, 30260, USA. {\tt dr.colton.magnant@gmail.com}},
Ingo Schiermeyer\footnote{Technische Universit{\"a}t Bergakademie Freiberg, Institut f{\"u}r Diskrete Mathematik und Algebra, 09596 Freiberg, Germany. {\tt Ingo.Schiermeyer@tu-freiberg.de}}.
}

\maketitle

\begin{abstract}
Given a graph $G$ and a positive integer $k$, define the \emph{Gallai-Ramsey number} to be the minimum number of vertices $n$ such that any
$k$-edge coloring of $K_n$ contains either a rainbow (all different colored) triangle
or a monochromatic copy of $G$. In this paper, we obtain general upper and lower bounds
on the Gallai-Ramsey numbers for fans $F_{m} = K_{1} + mK_{2}$ and prove the sharp result for $m = 2$ and for $m = 3$ with $k$ even.
\end{abstract}

\section{Introduction}

In this work, we consider only edge-colorings of graphs. A coloring of a graph is called \emph{rainbow} if no two edges have the same color.

Edge colorings of complete graphs that contain no rainbow triangle have very interesting and somewhat surprising structure. In 1967, Gallai \cite{MR0221974} first examined this structure under the guise of transitive orientations of graphs and it can also be traced back to \cite{MR1464337}. For this reason, colored complete graphs containing no rainbow triangle are called \emph{Gallai colorings}. Gallai's result was restated in \cite{MR2063371} in the terminology of graphs. For the following statement, a trivial partition is a partition into only one part.

\begin{theorem}[\cite{MR1464337, MR0221974, MR2063371}]\labelz{Thm:G-Part}
In any coloring of a complete graph containing no rainbow
triangle, there exists a nontrivial partition of the vertices (called a Gallai partition) such
that there are at most two colors on the edges between the parts and only one color on
the edges between each pair of parts.
\end{theorem}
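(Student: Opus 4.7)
The plan is to prove Theorem~\ref{Thm:G-Part} by induction on the number of vertices $n$. The base cases $n\le 2$ are immediate, since the singleton partition is nontrivial and has at most one color on cross edges. For $n\ge 3$, fix a Gallai coloring of $K_n$ and aim to produce the required partition $\mathcal{P}$.

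The first idea is a \emph{monochromatic reduction}. For each color $c$ used in the coloring, let $H_c$ be the spanning subgraph of all edges whose color is not $c$. If some $H_c$ is disconnected, take the connected components of $H_c$ as the parts of $\mathcal{P}$. This partition is automatically nontrivial, and every edge between two distinct components must be colored $c$, since an edge of any other color would belong to $H_c$ and would merge its endpoints into a single component. Hence $\mathcal{P}$ already satisfies the conclusion with only the single color $c$ appearing on cross edges.

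The harder case is when $H_c$ is connected for every color $c$ that appears. Here I would perform a \emph{bichromatic reduction}. Pick two appearing colors $c_1,c_2$ and let $F$ be the spanning subgraph of edges whose color lies outside $\{c_1,c_2\}$. If $F$ has a proper connected component on a vertex set $C\subsetneq V(K_n)$ (in particular, if $F$ has an isolated vertex), the no-rainbow-triangle hypothesis will force every edge between $C$ and $V(K_n)\setminus C$ to use a color in $\{c_1,c_2\}$: otherwise, a vertex $v\in C$ joined to two $F$-connected vertices $u,w\notin C$ by edges of distinct colors outside $\{c_1,c_2\}$ would yield a rainbow triangle via the $F$-path from $u$ to $w$ shortened to an edge. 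Iterating this construction (or applying induction to $K_n[V\setminus C]$ with $C$ absorbed as one part) produces the desired two-color partition, and the single-color-per-pair property follows by applying the rainbow-free hypothesis to any triangle with one vertex in each of three parts: three distinct cross colors would immediately yield a rainbow triangle.

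The main obstacle is the bichromatic step: one must guarantee that $F$ admits a proper component (so that $C$ is a nontrivial part) and rule out the pathological case where $F$ is connected and spans all of $V(K_n)$. Overcoming this likely requires a more careful choice of the pair $c_1,c_2$ — leveraging the fact that when the monochromatic reduction fails at least three colors must appear — or defining the anchor $C$ as a maximal set with respect to a suitable monochromatic-path equivalence rather than directly as a component of $F$. Once the correct $C$ is located, the remaining verifications (nontriviality of $\mathcal{P}$, the two-color restriction on cross edges, and the single-color-per-pair property) reduce to routine rainbow-triangle arguments combined with the inductive hypothesis.
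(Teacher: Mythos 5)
First, note that the paper does not prove this statement at all: it is Gallai's structure theorem, imported from \cite{MR0221974} (see also \cite{MR1464337, MR2063371}), so there is no in-paper proof to compare yours against. Evaluated on its own, your proposal correctly handles the easy half but leaves the entire substance of the theorem unproved. Your monochromatic and bichromatic reductions together amount to the observation that the theorem is \emph{equivalent} to the following claim: in any Gallai coloring on at least two vertices there exist two colors $c_1,c_2$ such that the spanning subgraph $F$ of edges colored outside $\{c_1,c_2\}$ is disconnected (if such a pair exists, the components of $F$ give the partition; conversely, any Gallai partition forces $F$ to be disconnected for the two cross colors, since $F$ has no edges between parts). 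The case you defer --- $F$ connected and spanning for \emph{every} choice of pair, which can only threaten when at least three colors appear --- is therefore not a residual technicality but the whole theorem, and your closing sentence (``likely requires a more careful choice of the pair \dots or a suitable monochromatic-path equivalence'') names the difficulty without supplying an argument. The known proofs get past exactly this point only via a genuinely nontrivial step (Gallai's analysis of transitive orientations, or an inductive substitution/modular-decomposition argument as in Gy\'arf\'as and Simonyi), so this is a real gap, not a routine verification.

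A secondary, fixable flaw: your justification of the one-color-per-pair property invokes the wrong rainbow triangle. A triangle with one vertex in each of three parts rules out three distinct colors on a \emph{transversal} triangle, but it does not prevent a single pair of components $C, C'$ of $F$ from seeing both $c_1$ and $c_2$ between them. The correct routine argument uses triangles with two vertices in one component: if $ab$ is an $F$-edge inside $C$ (so its color lies outside $\{c_1,c_2\}$) and $v\notin C$, then $av$ and $bv$ must receive the same color to avoid a rainbow triangle, and propagating this along $F$-paths inside $C$ and inside $C'$ shows each pair of components is joined monochromatically. This part is salvageable; the connected case described in the previous paragraph is not, as written.
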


The induced subgraph of a Gallai colored complete graph constructed by selecting a single vertex from each part of a Gallai partition is called the \emph{reduced graph}. By Theorem~\ref{Thm:G-Part}, the reduced graph is a $2$-colored complete graph.

Given two graphs $G$ and $H$, let $R(G, H)$ denote the $2$-color Ramsey number for finding a monochromatic $G$ or $H$, that is, the minimum number of vertices $n$ needed so that every red-blue coloring of $K_{n}$ contains either a red copy of $G$ or a blue copy of $H$. Although the reduced graph of a Gallai partition uses only two colors, the original Gallai-colored complete graph could certainly use more colors. With this in mind, we consider the following generalization of the Ramsey numbers. Given two graphs $G$ and $H$, the \emph{general $k$-colored Gallai-Ramsey number} $gr_k(G:H)$ is defined to be the minimum integer $m$ such that every $k$-coloring of the complete graph on $m$ vertices contains either a rainbow copy of $G$ or a monochromatic copy of $H$. With the additional restriction of forbidding the rainbow copy of $G$, it is clear that $gr_k(G:H)\leq R_k(H)$ for any $G$.

The Gallai-Ramsey numbers have been studied for a few choices of the rainbow graph $G$ and a variety of choices of the monochromatic graph $H$. In light of Theorem~\ref{Thm:G-Part}, many (perhaps most) of the results have involved a rainbow triangle. Recent breakthroughs include the Gallai-Ramsey numbers for the $K_{4}$, the $K_{5}$, and all odd cycles, as seen in the following results.

\begin{theorem}[\cite{LMSSS17}]
For $k\ge 1$,
$$
gr_{k}(K_{3} : K_{4}) = \begin{cases} 17^{k/2} + 1 & \text{ if } k \text{ is even,}\\
3\cdot 17^{(k - 1)/2} + 1 & \text{ if } k \text{ is odd.}
\end{cases}
$$
\end{theorem}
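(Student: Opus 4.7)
I would prove matching upper and lower bounds by induction on $k$ in jumps of two, mirroring the factor $17 = R(K_4,K_4)-1$ in the formula. The base cases are $k=1$ (any one color on $K_4$ is itself a monochromatic $K_4$) and $k=2$ (the classical $R(K_4,K_4)=18$, together with the observation that any $2$-coloring is automatically rainbow-triangle-free).

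\textbf{Construction for the lower bound.} At $k=2$ I would use the Paley coloring of $K_{17}$, the (essentially unique) $2$-coloring of $K_{17}$ with no monochromatic $K_4$; at $k=1$ I would use a monochromatic $K_3$. For the inductive step, take an optimal $k$-colored construction on $N = gr_k(K_3:K_4)-1$ vertices and blow up each vertex of the Paley $K_{17}$ by a disjoint copy of this coloring, using two fresh colors for the cross-copy edges to realize the Paley pattern. A triangle spanning at least two copies uses at most two of the two fresh colors (so is not rainbow), and a triangle inside one copy is not rainbow by induction. A monochromatic $K_4$ either projects to a monochromatic $K_4$ in the Paley $K_{17}$ (impossible) or lies inside a single copy (impossible by induction).

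\textbf{Upper bound inductive step.} Given a $(k+2)$-coloring of $K_n$ on $n = 17(gr_k(K_3:K_4)-1)+1$ vertices with no rainbow triangle, I would apply Theorem~\ref{Thm:G-Part} to obtain a Gallai partition whose reduced graph is $2$-colored in colors $a$ and $b$. If the reduced graph has at least $18$ parts, then $R(K_4,K_4)=18$ yields a monochromatic $K_4$ in the reduced graph which lifts to the original coloring, and we are done. Otherwise the partition has at most $17$ parts, and averaging produces a part $V^{\ast}$ with $|V^{\ast}|\ge gr_k(K_3:K_4)$ on which I would like to apply the inductive hypothesis.

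\textbf{Main obstacle.} The technical crux is forcing the induced coloring on $V^{\ast}$ to use only $k$ of the $k+2$ colors, so that induction applies. The key observation is that if two parts $V_i,V_j$ are joined by color $a$ in the reduced graph and both contain an internal $a$-edge, then picking two vertices from each produces a monochromatic $a$-colored $K_4$; hence the parts containing an internal $a$-edge form an independent set in the reduced $a$-subgraph, which is a $b$-clique of size at most $3$ because the reduced $b$-subgraph is $K_4$-free, and symmetrically for $b$. Combined with the fact that at least one of $a, b$ spans a connected reduced subgraph and the tight structure of $K_4$-free $2$-colorings of $K_{t}$ for $t \le 17$, a careful case analysis on which parts are polluted by $a$ or $b$ should either locate a large part free of $a$ and $b$ internally (where only $k$ colors remain and induction gives the monochromatic $K_4$) or directly exhibit a monochromatic $K_4$ using edges across several parts. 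This case analysis, and the bookkeeping needed to guarantee that exactly two colors drop out at each inductive step, is where I expect the bulk of the work to lie.
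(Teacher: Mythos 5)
The paper does not prove this theorem; it only quotes it from \cite{LMSSS17}, so there is no in-paper proof to compare against and your proposal must be judged on its own terms. Your lower bound is correct and complete in outline: blowing up the $K_4$-free $2$-coloring of $K_{17}$ by optimal $k$-colorings with two fresh colors on the cross edges yields $17(gr_k(K_3:K_4)-1)$ vertices, the two base cases give the even/odd formulas, and your verification is sound (a monochromatic $K_4$ in a new color must meet four distinct parts because internal edges avoid the new colors, hence projects into the Paley coloring; one in an old color lies in a single part). This is the same style of construction the present paper uses for fans, e.g.\ in Lemma~\ref{Thm:Fn-Down}.

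The upper bound, however, has a genuine gap beyond the bookkeeping you defer. Your inductive step presumes that \emph{two} colors can be eliminated inside the large part $V^{\ast}$ at every stage, but when the Gallai partition (taken with a minimum number of parts) has $t=2$, only one color, say $a$, appears between the parts; color $b$ does not exist as a reduced-graph color, your ``polluted parts'' argument says nothing about it, and the part of size $\geq gr_k(K_3:K_4)$ can legitimately carry $k+1$ of the $k+2$ colors. Only one color can be dropped there, and the arithmetic of the theorem (the factor $3$ in the odd case, anchored at $gr_1=4$) reflects exactly this asymmetric step; a clean two-color-drop induction cannot be made to close on its own. The standard fix is to prove a stronger statement that tolerates single-color drops --- precisely the $gr'_{k'}$ device this paper introduces for $F_2$ in Lemma~\ref{Lem:F2k'Up}, where the $t=2$ case is handled by showing the between-parts color is reduced to a matching inside each part and inducting on the number of \emph{useful} colors. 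Even for $t\geq 4$, your bound of at most $3+3$ polluted parts does not prevent the unique part of size $\geq gr_k(K_3:K_4)$ from being polluted; you must additionally show that the $a$- and $b$-edges inside such a part are confined to a structure (essentially a matching or a triangle-free remnant) that can be destroyed by deleting a bounded number of vertices before induction applies. These two deferred points are where the entire difficulty of the theorem lives, so as written the upper-bound argument does not go through.
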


\begin{theorem}[\cite{K5}]
Let $R = R(K_{5}, K_{5}) - 1$. For any integer $k \geq 2$,
$$
gr_{k}(K_{3} : K_{5}) = \begin{cases}
R^{k/2} + 1 & \text{ if $k$ is even,}\\
4 \cdot R^{(k - 1)/2} + 1 & \text{ if $k$ is odd}
\end{cases}
$$
unless $R = 42$, in which case we have
$$
\begin{cases}
gr_{k}(K_{3} : K_{5}) = 43 & \text{ if $k=2$},\\
42^{k/2} + 1 \leq gr_{k}(K_{3} : K_{5}) \leq 43^{k/2} + 1 & \text{ if $k \geq 4$ is even,}\\
169 \cdot 42^{(k-3)/2} + 1 \leq gr_{k}(K_{3} : K_{5}) \leq 4 \cdot 43^{(k-1)/2} + 1
 & \text{ if $k \geq 3$ is odd.}
\end{cases}
$$
\end{theorem}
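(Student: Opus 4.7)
The plan is to prove matching upper and lower bounds by induction on $k$, with Theorem~\ref{Thm:G-Part} as the main structural tool and the base case supplied by the definition $R = R(K_{5}, K_{5}) - 1$.

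For the lower bound I would use the standard blow-up construction. Fix an extremal red/blue coloring $\chi$ of $K_{R}$ that avoids a monochromatic $K_{5}$, which exists by the choice of $R$. For even $k$, iterate: starting from a single vertex, repeatedly replace every vertex of the current coloring by a fresh copy of $(K_{R}, \chi)$ using two brand-new colors. After $k/2$ iterations this yields a Gallai $k$-coloring of $K_{R^{k/2}}$ with no monochromatic $K_{5}$. For odd $k$, use a single-color $K_{4}$ as the seed (which trivially avoids $K_{5}$) and perform $(k-1)/2$ blow-ups, producing $K_{4 \cdot R^{(k-1)/2}}$ with the same property; this gives the claimed lower bounds.

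For the upper bound I would induct on $k$, the base case $k = 2$ being the classical Ramsey number $R(K_{5}, K_{5}) = R + 1$. In the inductive step, take a Gallai $k$-coloring of $K_{N}$ with $N$ equal to the claimed value and apply Theorem~\ref{Thm:G-Part} to obtain a partition $V_{1}, \dots, V_{t}$ whose reduced graph $H$ uses only two colors, say red and blue. If $t \geq R + 1$, then $H$ itself contains a monochromatic $K_{5}$ and we are done by choosing one vertex from each of the five parts. Otherwise $t \leq R$, so the largest part satisfies $|V_{1}| \geq \lceil N/R \rceil$, which is $R^{(k-2)/2} + 1$ in the even case and $4 \cdot R^{(k-3)/2} + 1$ in the odd case. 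If neither red nor blue appears inside $V_{1}$, the inductive hypothesis applied to $V_{1}$ on only $k - 2$ colors immediately produces a monochromatic $K_{5}$.

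The main obstacle is the mixed case, where red or blue reappears inside $V_{1}$. Here the strategy is to bootstrap a small monochromatic configuration inside $V_{1}$ using structure in $H$: a red edge inside $V_{1}$ together with a red triangle in $H$ on three other parts all joined to $V_{1}$ by red gives a red $K_{5}$, and analogously for blue and for red $K_{3}$/blue $K_{3}$ combinations inside two distinct parts. Proving that such an extending configuration is forced whenever the direct induction on $V_{1}$ fails requires a careful Ramsey-type analysis of the two-coloring of $H$ on $t \leq R$ vertices, essentially showing that any near-extremal two-coloring of $K_{R}$ contains enough monochromatic $K_{4}$'s to absorb interior edges from the largest part. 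This is where the exceptional case $R = 42$ enters: the cleanest form of this structural statement can be verified only when $R$ strictly exceeds the lower bound on $R(K_{5},K_{5})$, and when $R = 42$ the exact value of $R(K_{5},K_{5})$ is open, so one must settle for using $R + 1 = 43$ as the blow-up factor in the upper bound, producing the factor-$43^{k/2}$ gap recorded in the theorem.
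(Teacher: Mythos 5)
First, a framing point: this theorem is not proved anywhere in the present paper --- it is quoted from \cite{K5} as background in the introduction, so there is no in-paper proof to compare your proposal against. Judged on its own terms, your outline has the right general shape (blow-up constructions for the lower bound, induction through Theorem~\ref{Thm:G-Part} for the upper bound), but it has concrete gaps. On the lower bound, your construction yields $4\cdot R^{(k-1)/2}+1$ for odd $k$, which matches the main case but does not recover the stated bound $169\cdot 42^{(k-3)/2}+1$ in the exceptional case $R=42$, since $4\cdot 42=168<169$. That stronger bound needs a different seed --- a Gallai $3$-coloring of $K_{169}$ with no monochromatic $K_{5}$, blown up $(k-3)/2$ times --- and a monochromatic $K_{4}$ seed is strictly weaker there.

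On the upper bound, the step you flag as ``the main obstacle'' is essentially the entire content of the theorem, and your sketch does not close it. Passing to the largest part $V_{1}$ and invoking induction works only when red and blue are absent from $V_{1}$; when they are not, the bootstrap you describe (a red edge inside $V_{1}$ plus a red triangle in the reduced graph on parts all joined to $V_{1}$ in red) is not forced to exist --- a near-extremal two-coloring of the reduced graph need not contain such a configuration attached to the particular part $V_{1}$. Arguments of this type (compare the proofs of Lemma~\ref{Lem:F2k'Up} and Lemma~\ref{Lem:F2EvenUp} in this paper for fans) do not isolate the largest part: they bound the number of large parts, control the monochromatic edges inside \emph{every} part simultaneously via statements like Claim~\ref{Clm:F2}, and then sum $\sum_{i}|H_{i}|$ against the inductive bound. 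Finally, your account of the exceptional case is speculative: nothing in the sketch actually produces the $43^{k/2}+1$ upper bound when $R=42$, and the claim that the clean statement ``can be verified only when $R$ strictly exceeds the lower bound on $R(K_{5},K_{5})$'' is asserted rather than derived. As written this is a plan for a proof, not a proof.
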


\begin{theorem}[\cite{OddCycles, SongOddCycles}]
For integers $\ell \geq 3$ and $k \geq 1$, we have
$$
gr_{k} (K_{3} : C_{2\ell + 1}) = \ell \cdot 2^{k} + 1.
$$
\end{theorem}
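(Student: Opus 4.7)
The strategy is induction on $k$, with the lower bound handled by an explicit recursive construction and the upper bound by combining Theorem~\ref{Thm:G-Part} with two-color Ramsey information about odd cycles.

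For the lower bound, the plan is to build a Gallai $k$-coloring of $K_{\ell \cdot 2^k}$ with no monochromatic $C_{2\ell+1}$ recursively. The base coloring for $k=1$ is a single color on $K_{2\ell}$, which has too few vertices to contain $C_{2\ell+1}$. Given a construction on $\ell \cdot 2^{k-1}$ vertices using $k-1$ colors, take two vertex-disjoint copies and put a new $k$-th color on every edge between them. No rainbow triangle is created because any triangle either lies inside a single copy (where the coloring is Gallai by induction) or uses exactly two edges of the new color. No monochromatic $C_{2\ell+1}$ is created because the first $k-1$ colors are confined to a single copy (clean by induction) and the new color forms a complete bipartite graph, which has no odd cycle.

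For the upper bound, the base case $k=1$ is $R(C_{2\ell+1}) = 2\ell+1$, and the base case $k=2$ is the classical result $R(C_{2\ell+1}, C_{2\ell+1}) = 4\ell+1$ of Bondy and Erd\H{o}s. For the inductive step, assume a Gallai $k$-coloring of $K_n$ with $n = \ell \cdot 2^k + 1$ has no monochromatic $C_{2\ell+1}$ and apply Theorem~\ref{Thm:G-Part} to obtain a nontrivial partition $V_1, \ldots, V_t$ whose reduced graph $R$ is $2$-colored, say in red and blue. If $t \geq 4\ell + 1$, the Bondy--Erd\H{o}s bound produces a red or blue $C_{2\ell+1}$ in $R$, which lifts directly to a monochromatic $C_{2\ell+1}$ in $K_n$. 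Thus $t \leq 4\ell$, and an averaging argument gives a part $V_i$ with $|V_i| \geq \lceil n/t \rceil$; the aim is then to either apply induction with fewer colors inside $V_i$, or to use a long monochromatic path in $R$ together with a few parts that have size at least $2$ to close up a cycle of the exact desired length $2\ell+1$.

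The main obstacle is the case analysis needed to close monochromatic paths in the reduced graph into cycles of length exactly $2\ell+1$. Lifting a red path $P$ of length $p$ in $R$ to $K_n$ gives a red path using one vertex per part, but to extend to a cycle of length $2\ell+1$ one must absorb additional vertices from parts incident to $P$, and each such absorption changes the parity of the cycle length; one needs a careful bookkeeping of the sizes and colors of edges at the two endpoints of the path, as well as of the ``chord'' edges inside parts, to guarantee that the right parity is achievable. The complementary case, where no part is large enough for an inductive step on $k-1$ colors, must then be handled by a structural argument showing that the two reduced-graph colors cannot simultaneously avoid both a long path and a sufficiently large balanced bipartition; balancing these two cases so that they together cover the pigeonhole range $t \leq 4\ell$ is where the proof requires the most care.
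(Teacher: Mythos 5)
This statement is quoted in the paper from the references \cite{OddCycles, SongOddCycles}; the paper itself contains no proof of it, so there is nothing internal to compare against. Judged on its own, your proposal is correct and complete for the lower bound: the recursive doubling construction starting from a monochromatic $K_{2\ell}$, with each new color forming a complete bipartite (hence odd-cycle-free) graph, is exactly the standard extremal coloring and does give $\ell \cdot 2^{k}$ vertices with no rainbow triangle and no monochromatic $C_{2\ell+1}$.

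The upper bound, however, has a genuine gap. Two specific problems: (i) your averaging step does not deliver what the induction needs. With $t \leq 4\ell$ parts, the largest part has order only about $n/t \approx 2^{k-2}$, which is far below the threshold $\ell \cdot 2^{k-1} + 1$ required to invoke the inductive hypothesis with $k-1$ colors; so "apply induction inside a large part" is not available from the pigeonhole bound alone. (ii) Even when a part is large enough, the induced coloring on that part may still use all $k$ colors -- Theorem~\ref{Thm:G-Part} restricts only the colors \emph{between} parts -- so to drop to $k-1$ colors you must first prove that a reduced-graph color is essentially absent (or reducible to a bounded defect) inside the part, typically by showing that any substantial red structure inside a part combines with the red edges leaving it to close a red $C_{2\ell+1}$ of exactly the right length. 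This is precisely the parity/absorption bookkeeping you flag as "the main obstacle" and then leave unresolved; in the actual proofs of this theorem that analysis is the bulk of the work (a lengthy case analysis on the number and sizes of large parts, analogous in spirit to the appendix proof of Lemma~\ref{Lem:F3} in this paper). As written, your argument establishes the lower bound and the correct framework for the upper bound, but not the upper bound itself.
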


We refer the interested reader to the survey \cite{MR2606615} for a catalog of results on this subject with a dynamically updated version available at \cite{FMO14}.

In keeping with the trend of studying monochromatic subgraphs in Gallai colorings, we consider the fan graphs in this work. The \emph{fan} graph with $n$ triangles is denoted by $F_n$, where $F_{n} = K_{1} +n \overline{K_{2}}$. Note that $F_{1} = K_{3}$ and $F_{2}$ is a graph obtained from two triangles by sharing one vertex, often called a ``bowtie''. The main results of this work, the precise result for $F_{2}$, nearly sharp bounds for $F_{3}$, and general bounds for $F_{n}$, are contained in the following three theorems. First our sharp result for $F_{2}$.

\begin{theorem}\labelz{Thm:F2}
$$
gr_k(K_3;F_2)=\begin{cases}
9, &\mbox {\rm if}~k=2;\\[0.2cm]
\frac{83}{2}\cdot 5^{\frac{k-4}{2}}+\frac{1}{2}, &\mbox {\rm if}~k~is~even,~k\geq 4;\\[0.2cm]
4\cdot 5^{\frac{k-1}{2}}+1, &\mbox {\rm if}~k~is~odd.
\end{cases}
$$
\end{theorem}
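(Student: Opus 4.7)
My plan is to prove the theorem by establishing matching lower and upper bounds in each of the three cases ($k=2$; odd $k$; even $k\ge 4$). The base cases will use the classical fact $R(F_2,F_2)=9$, and the recursion will be driven by a factor of approximately $5$ per two added colors, coming from Gallai two-colorings of a small complete graph (typically $K_5$) that contain no monochromatic $F_2$.

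\textbf{Lower bound.} I would build Gallai colorings of $K_{N_k-1}$, where $N_k$ is the claimed value, by an inductive blow-up. The base pieces I expect to use are: a $1$-coloring of $K_4$ (trivially $F_2$-free, since $F_2$ has five vertices); the extremal $2$-coloring of $K_8$ witnessing $R(F_2,F_2)=9$; and a specifically designed $4$-coloring of $K_{41}$ to anchor the even chain. Given a Gallai $k$-coloring $\chi$ of $K_n$ without monochromatic $F_2$, together with a Gallai $2$-coloring of a small complete graph in two fresh colors without monochromatic $F_2$ (the prototype being two edge-disjoint $5$-cycles covering $K_5$, each of which is triangle-free), I would blow up each vertex $v$ of $\chi$ by this gadget and color inter-blob edges by $\chi(v,v')$. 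The result is a Gallai $(k+2)$-coloring, since any triangle either lies inside one blob (using at most two of the new colors) or has two edges of a $\chi$-color. The verification that no monochromatic $F_2$ is created should reduce to a short case analysis based on how the five vertices of a putative monochromatic $F_2$ distribute among blobs: entirely inside one blob contradicts the gadget, and otherwise the pattern projects to a monochromatic $F_2$ already present in $\chi$. For the odd chain this gives $N_{k+2}-1 = 5(N_k-1)$ exactly; for the even chain I would adjust the gadget slightly to produce the observed $N_{k+2}-1 = 5(N_k-1)+2$.

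\textbf{Upper bound.} I would proceed by induction on $k$, using $k=2$ handled by $R(F_2,F_2)=9$ as the base case. For the inductive step, given a Gallai $k$-coloring of $K_n$ with $n = gr_k(K_3;F_2)$, I would apply Theorem~\ref{Thm:G-Part} to obtain a Gallai partition $V_1,\dots,V_t$ whose reduced graph $H$ uses at most two colors, say red and blue. If $t\ge 9$, then $H$ itself contains a monochromatic $F_2$ by $R(F_2,F_2)=9$, and this lifts to the host by picking one representative from each relevant part. Otherwise $t\le 8$ and pigeonhole produces a large part $V_1$. From here I would aim for a dichotomy: either the red/blue edges between $V_1$ and the remaining parts supply enough two-colored structure that two vertices of $V_1$ can act as the two triangle-centres of a monochromatic $F_2$ extending outward, or those crossing edges are so restricted that the induced coloring on $V_1$ effectively uses only $k-2$ colors, allowing the inductive hypothesis to apply inside $V_1$.

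\textbf{Main obstacle.} The main difficulty I anticipate is tightening the pigeonhole so that the resulting recursion delivers the factor $5$ matching the lower bound, rather than the naive factor $8$ coming from $|V_1|\ge n/t\ge n/8$. This will require a careful structural analysis of $2$-edge-colored complete graphs on at most $8$ vertices with no monochromatic $F_2$ in either color: these are highly constrained near-extremal configurations closely related to the unique sharp example for $R(F_2,F_2)=9$, and their rigidity should allow one to merge parts or discount parts in the reduced graph so that the effective multiplicative factor drops to $5$. Executing the parity split cleanly, and in particular constructing and analyzing the asymmetric $k=4$ base that produces the constant $83/2$ in the even-$k$ formula, is where I expect the bulk of the technical effort to concentrate.
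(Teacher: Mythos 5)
Your lower-bound construction, as written, fails: you substitute in the wrong direction. You propose to blow up each vertex $v$ of an existing $F_2$-free coloring $\chi$ into a copy of the $K_5$ gadget and to color inter-blob edges by $\chi(v,v')$. But then any monochromatic triangle $\bar v\bar x\bar y$ of $\chi$ (and $\chi$ is full of them --- your own base case is a monochromatic $K_4$) produces a monochromatic $F_2$: pick $v\in B_{\bar v}$, two vertices $x,z\in B_{\bar x}$ and two vertices $y,w\in B_{\bar y}$; the triangles $vxy$ and $vzw$ share only $v$ and all six edges carry the old color. Your claimed projection argument breaks precisely because distinct outer vertices of the putative $F_2$ may project to the same vertex of $\chi$, so the pattern need not project to an $F_2$ in $\chi$. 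The correct construction is the reverse substitution: take five disjoint copies of $\chi$ and join them according to the two-colored $K_5$ in the fresh colors; then old-color subgraphs stay confined to single copies and new-color subgraphs are triangle-free. Also, the even chain's increment $N_{k+2}-1=5(N_k-1)+2$ does not come from ``adjusting the gadget'': it requires replacing certain $K_8$ blocks deep inside the recursion by specially designed $K_9$ and $K_{10}$ blocks, placed in \emph{different} branches of the recursion to avoid creating a monochromatic $F_2$; this is a genuinely delicate piece of the argument that your sketch does not supply.

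On the upper bound, your outline points in the right direction but omits the two ideas that actually make the numbers work. First, the factor $5$ does not come from merging parts of the reduced graph; it comes from counting only the parts of order at least $2$: any monochromatic triangle in the reduced graph touching two such parts yields a monochromatic $F_2$, so the number $r$ of parts of order $\geq 2$ satisfies $r\leq 5$, and the extremal case is the unique triangle-free $2$-coloring of $K_5$. Second, in the odd case the recursion $5\bigl(n_{k-2}-1\bigr)=n_k-1$ has \emph{zero} additive slack, so you cannot afford even the couple of vertices you would delete to kill the red/blue edges lingering inside the parts; the paper resolves this by observing that those lingering edges form a matching, introducing the notion of ``wasted'' colors, and running the induction on the number of useful colors via a quantity $gr'_{k'}$. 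Without some such device your dichotomy (``effectively uses only $k-2$ colors'') does not close the odd-case induction, and the $k=4$ base of the even case additionally needs a structural lemma about $2$-colored $K_9$ and $K_{10}$ with one or two exceptional edges.
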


Next our general bounds (and sharp result for any even number of colors) for $F_{3}$.

\begin{theorem}\labelz{Thm:F3}
For $k\geq 2$,
$$
\begin{cases}
gr_k(K_3;F_3) = 14\cdot 5^{\frac{k-2}{2}}-1, &\mbox {\rm if}~k~is~even;\\[0.2cm]
gr_k(K_3;F_3) = 33\cdot 5^{\frac{k-3}{2}}, &\mbox {\rm if}~k=3,5;\\[0.2cm]
33\cdot 5^{\frac{k-3}{2}}\leq gr_k(K_3;F_3)\leq 33\cdot
5^{\frac{k-3}{2}}+\frac{3}{4}\cdot 5^{\frac{k-5}{2}}-\frac{3}{4},
&\mbox {\rm if}~k~is~odd,~k\geq 7.
\end{cases}
$$
\end{theorem}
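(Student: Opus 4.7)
The plan is to prove the lower and upper bounds separately: the lower bound by recursive construction and the upper bound by induction on $k$ using the Gallai partition (Theorem~\ref{Thm:G-Part}). Much of the strategy parallels the proof of Theorem~\ref{Thm:F2} for $F_2$, with the constant $5$ reappearing as the multiplier in the recursion.

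\emph{Lower bound.} I would argue recursively, with base cases at $k = 2$ and $k = 3$. When $k = 2$ only two colors are available, so no rainbow triangle can exist and $gr_2(K_3;F_3) = R(F_3,F_3) = 13$, witnessed by any $2$-coloring of $K_{12}$ with no monochromatic $F_3$. For $k = 3$ one needs an explicit $3$-coloring of $K_{32}$ with no rainbow triangle and no monochromatic $F_3$; the natural attempt uses a known $2$-color extremal construction as a building block together with a small third-color piece. For the inductive step from $k$ to $k+2$, I would take the extremal coloring on $N_k$ vertices, produce five copies of it as blocks, and color the edges between the five blocks using two new colors according to the $2$-coloring of $K_5$ in which each of the two colors forms a $C_5$. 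No monochromatic $F_3$ in a new color is created: any triangle in a new color through a vertex $v$ in block $V_i$ would require an edge between two blocks that are both red-adjacent to $V_i$, but those two blocks are non-adjacent in the red $C_5$. The additive slack of $8$ (even case) or $4$ (odd case) vertices beyond $5 N_k$ would be absorbed by a padding of extra vertices joined to the blocks using only the new colors in a pattern avoiding $F_3$.

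\emph{Upper bound.} For the upper bound I would induct on $k$ with base case $k = 2$. Suppose $n$ equals the claimed upper bound and consider a $k$-coloring of $K_n$ with no rainbow triangle and no monochromatic $F_3$. Apply Theorem~\ref{Thm:G-Part} to obtain a Gallai partition $V_1,\dots,V_p$ with red and blue between parts. Since the reduced graph is $2$-colored on $p$ vertices, $p \le R(F_3,F_3) - 1 = 12$; otherwise a monochromatic $F_3$ in the reduced graph lifts to one in $K_n$ by choosing a single vertex from each part involved. The induced coloring on each $V_i$ remains Gallai and $F_3$-free, and because red and blue interact with the edges leaving $V_i$ they cannot appear freely inside $V_i$ without producing an $F_3$; one therefore aims to show that $|V_i| \le gr_{k-2}(K_3;F_3) - 1$ by induction.

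The main obstacle is tightening this crude estimate via a careful case analysis on the reduced graph, since a naive sum over $p \le 12$ parts is far too generous. The refinement classifies the parts by whether red, blue, or both appear on their outgoing edges and exploits $F_3$-avoidance to bound the sizes of red- or blue-dense parts. For instance, if three distinct red-adjacent parts each contain a red edge, then any common red-neighbour completes an $F_3$ in red; this sort of argument forces the number of large parts to be essentially at most $5$, yielding the recurrence $N_k \le 5 N_{k-2} + O(1)$ that matches the lower bound. Carrying out this case analysis while tracking the precise additive constants for both parities of $k$ is the technical core of the proof, and the remaining gap for odd $k \ge 7$ likely reflects a stubborn subcase in which the slack cannot be reduced below $\frac{3}{4}\cdot 5^{(k-5)/2}-\frac{3}{4}$.
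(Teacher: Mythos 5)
Your overall architecture (five-block recursion for the lower bound, Gallai partition plus induction on $k$ for the upper bound) matches the paper's, and you correctly computed the additive slack of $8$ (even $k$) and $4$ (odd $k$) beyond $5N_{k-2}$. But your mechanism for absorbing that slack is a genuine gap: you propose to add extra vertices ``joined to the blocks using only the new colors in a pattern avoiding $F_3$.'' This cannot work. To avoid a rainbow triangle, each extra vertex must send a single new color to each block, so the extra vertices form additional parts of the top-level Gallai partition; the reduced graph is then a $2$-coloring of $K_{5+s}$ with $s\geq 1$, which contains a monochromatic triangle. Any such triangle uses at least two of the five original blocks, each of order at least $12$, so it yields three disjoint monochromatic edges between two blocks together with a common neighbour in the third part --- a monochromatic $F_3$. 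The paper instead absorbs the slack \emph{inside} the blocks: it replaces certain monochromatic $K_6$'s (in color $1$) within selected blocks by carefully colored $K_8$'s containing a $C_5$ in color $2$ and a triangle in one of the two new colors, gaining $2$ vertices per replacement ($4$ replacements for even $k$, $2$ for odd $k$), and it must check which copies may safely be replaced --- replacing the wrong one creates a monochromatic $F_3$ (the paper's Figure~\ref{Fig:WrongG4i}). This internal-enlargement gadget is exactly what pushes the constants up from the naive $12\cdot 5^{(k-2)/2}$ to $14\cdot 5^{(k-2)/2}-2$, so without it you do not reach the claimed lower bound. Your base case $G_3$ on $32$ vertices is also left unspecified, and it requires the same gadget ($K_8$ with a $C_5$ in color $2$ and a triangle in color $3$, joined to four monochromatic $K_6$'s in the $K_5$ blow-up pattern).

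On the upper bound, your sketch states the right skeleton but the inductive claim $|V_i|\leq gr_{k-2}(K_3;F_3)-1$ is too strong as written: a part can contain some red or blue edges (e.g.\ a single red edge, or a red subgraph of $C_5$ or $2K_3$), and the correct statement is that a bounded number of vertices can be deleted from each part to destroy all red and blue edges inside it. Controlling exactly how many deletions are needed --- via statements like the paper's Fact~\ref{Fact:3K2}, Claim~\ref{Clm:C5} (the red subgraph of a part is contained in $C_4$, $C_5$, or $2K_3$), Claim~\ref{Claim:F1}, and Claim~\ref{Claim:Kill8} (a total of at most $4$ vertices kills all blue edges across all five parts when $r=5$) --- is precisely where the tight additive constants come from, and the special arguments for $k=3,4,5$ (including the $4$-color Claim~\ref{Claim:8}) occupy most of the appendix. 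As submitted, your proposal identifies the shape of the argument but does not supply the two ingredients that make the theorem true with these particular constants.
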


In particular, we conjecture the following, which claims that the lower bound in Theorem~\ref{Thm:F3} is the sharp result.

\begin{conjecture}
For $k\geq 2$,
$$
gr_k(K_3;F_3) = \begin{cases}
14\cdot 5^{\frac{k-2}{2}}-1, & \text{ if $k$ is even;}\\
33\cdot 5^{\frac{k-3}{2}}, & \text{ if $k$ is odd.}
\end{cases}
$$
\end{conjecture}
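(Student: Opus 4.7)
The conjecture asserts that the lower bound in Theorem~\ref{Thm:F3} is sharp for every $k\ge 2$. Since the even cases and the odd cases $k=3,5$ are already handled there, only the odd range $k\ge 7$ remains, where the current upper and lower bounds differ only by the additive slack $\tfrac{3}{4}\cdot 5^{(k-5)/2}-\tfrac{3}{4}$. My plan is to close this gap by induction on $k$ in steps of two, using $k=3$ and $k=5$ as base cases, aiming for the clean recursion $gr_{k+2}(K_3;F_3)\le 5\,gr_k(K_3;F_3)$.

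Fix odd $k\ge 7$ and set $n=33\cdot 5^{(k-3)/2}$. Suppose for contradiction that some Gallai $k$-coloring of $K_n$ avoids a monochromatic $F_3$. Apply Theorem~\ref{Thm:G-Part} to obtain a Gallai partition $\{V_1,\dots,V_t\}$ with $2$-colored reduced graph $R$ in two distinguished colors, say red and blue, chosen with $t$ maximum. The strategy, as in \cite{LMSSS17,K5,OddCycles}, is to exploit this partition to locate a large induced subgraph on which only $k-2$ colors appear, and then apply the inductive hypothesis. Concretely, by induction no part $V_i$ can itself contain $gr_{k-2}(K_3;F_3) = 33\cdot 5^{(k-5)/2} = n/5$ vertices, so $|V_i|\le n/5-1$ and $t\ge 6$. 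I would then combine two engines: a pigeonhole/averaging argument on a vertex $v$ in the heaviest part, since any color $c$ appearing on at least $gr_{k-1}(K_3;F_2)$ edges at $v$ yields a monochromatic $F_2$ in that $c$-neighborhood by Theorem~\ref{Thm:F2}, and hence an $F_3$ through $v$; and a weighted two-color analysis of $R$ that forces a red or blue $F_3$ spanning several parts (with possibly multiple vertices taken from a heavy part) whenever the weight distribution $\sum |V_i|=n$ is not in a narrow extremal regime.

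The hard part, I expect, is handling Gallai partitions with several parts of moderate, roughly balanced size, which is exactly the regime where the current upper-bound proof of Theorem~\ref{Thm:F3} gives up the additive slack. Removing it seems to require a \emph{stability} version of the two-color Ramsey equality $R(F_3,F_3)=13$: any $2$-coloring of a large complete graph avoiding monochromatic $F_3$ should be essentially a blow-up of a rigid extremal coloring on $13$ vertices. With such a stability theorem in hand, the weighted reduced graph can carry total weight at most $33\cdot 5^{(k-3)/2}-1$ without forcing a monochromatic $F_3$, which is exactly what is needed. If this stability statement proves too delicate to establish directly, a natural fallback is the iterative one-color-at-a-time peeling used in the odd-cycle arguments of \cite{OddCycles,SongOddCycles}, invoking Theorem~\ref{Thm:F2} at each step to build the $F_3$ at a chosen central vertex; even short of full sharpness this should substantially narrow the remaining gap for odd~$k$.
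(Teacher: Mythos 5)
The statement you are addressing is not a theorem of the paper but a conjecture that the paper explicitly leaves open: Theorem~\ref{Thm:F3} establishes it only for even $k$ and for $k=3,5$, while for odd $k\geq 7$ the paper's upper bound exceeds the conjectured value by $\frac{3}{4}\cdot 5^{\frac{k-5}{2}}-\frac{3}{4}$. Your proposal is a research plan rather than a proof, and as written it does not close that gap. The central unjustified step is the assertion that, by induction, no part $V_i$ of the Gallai partition can contain $gr_{k-2}(K_3:F_3)$ vertices, which would give the clean recursion $gr_{k+2}\leq 5\,gr_k$. A part of a Gallai partition may use all $k$ colors internally, including the two colors of the reduced graph; the entire labor of the paper's Lemma~\ref{Lem:F3} (via Facts~\ref{Fact:Deg2},~\ref{Fact:2disjoint},~\ref{Fact:3K2} and Claims~\ref{Clm:C5},~\ref{Claim:F1}) is to show that red and blue can be \emph{nearly} but not entirely eliminated inside the parts, and the residual edges force the bound $|G|\leq 5[g(k-2)-1]+8$ whose additive $+8$ is exactly what compounds into the slack term for odd $k$. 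Assuming the recursion without slack assumes away the whole difficulty.

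Two further specific problems. First, your pigeonhole ``engine'' is unsound: if color $c$ appears on many edges at $v$, a monochromatic $F_2$ inside the $c$-neighborhood of $v$ need not have color $c$, and even when it does it supplies only two disjoint $c$-edges among the neighbors of $v$, whereas an $F_3$ centered at $v$ requires a $c$-colored $3K_2$ there; so Theorem~\ref{Thm:F2} does not produce the desired $F_3$ through $v$. Second, the stability version of $R(F_3,F_3)=13$ that your argument ultimately rests on is conjectural, and the paper's own lower-bound constructions in Lemma~\ref{Lem:F3Low} --- where copies of $G_1$ are replaced by sporadic $K_8$ gadgets containing a $C_5$ in color $2$ and a triangle in a fresh color --- show that the extremal colorings are not blow-ups of a single rigid $13$-vertex pattern, so any such stability theorem would have to accommodate precisely these irregular gadgets. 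In short, no proof of the conjecture is present here, and none is present in the paper either.
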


Finally our general bound for all fans.

\begin{theorem}\labelz{Thm:Fn}
For $k\geq 2$,
$$
\begin{cases}
4n\cdot 5^{\frac{k-2}{2}}+1\leq gr_k(K_3;F_n)\leq 10n\cdot 5^{\frac{k-2}{2}}-\frac{5}{2}n+1, &\mbox {\rm if}~k~is~even;\\[0.2cm]
2n\cdot 5^{\frac{k-1}{2}}+1\leq gr_k(K_3;F_n)\leq \frac{9}{2}n\cdot 5^{\frac{k-1}{2}}-\frac{5}{2}n+1, &\mbox {\rm if}~k~is~odd.
\end{cases}
$$
\end{theorem}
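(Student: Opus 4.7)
The plan is to prove the lower and upper bounds separately by induction on $k$ in steps of two, using $k \in \{2, 3\}$ as base cases. Both the even and odd bounds in the theorem satisfy the common recursion $U_k = 5\, U_{k-2} + (10n - 4)$, so this is the recursion to target on both sides.

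For the \textbf{lower bound}, the idea is an iterative blow-up construction. The $k=2$ base is the standard extremal two-coloring of $K_{4n}$ witnessing $R(F_n, F_n) \geq 4n+1$, and the $k=3$ base is a suitable 3-color Gallai extension on $K_{10n}$. For the inductive step, given a Gallai $(k-2)$-coloring of $K_M$ with no monochromatic $F_n$, I would take five disjoint copies and color the cross-copy edges using two new colors according to a fixed 2-coloring of $K_5$ that itself avoids a monochromatic $F_n$. Such a 2-coloring of $K_5$ exists automatically when $2n+1 > 5$ and must be verified separately when $n = 2$. The resulting coloring on $5M$ vertices is Gallai because any triangle uses at most two of the new colors together with one inherited color, so no rainbow triangle appears; and no monochromatic $F_n$ emerges, since a fan confined to one copy is ruled out by induction while a fan spanning multiple copies would project to a monochromatic $F_n$ in the reduced $K_5$.

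For the \textbf{upper bound}, the target is the recursion $gr_k(K_3; F_n) \leq 5 \cdot gr_{k-2}(K_3; F_n) + 10n - 4$ for $k \geq 4$. The base cases are $gr_2(K_3; F_n) \leq 4n+1$, which follows from $R(F_n, F_n) = 4n + 1$, and $gr_3(K_3; F_n) \leq 20n + 1$, which I would handle by a direct Gallai partition argument. For the inductive step, take a Gallai $k$-coloring of $K_N$ with $N$ at the claimed upper bound and apply Theorem~\ref{Thm:G-Part} to obtain a Gallai partition $V_1, \ldots, V_t$ with two colors, say red and blue, on the reduced graph. If $t \geq 4n+1 = R(F_n, F_n)$, the reduced graph already contains a monochromatic $F_n$, and taking one vertex per relevant part lifts this fan to $K_N$. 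So we may assume $t \leq 4n$. The crux is then to show that at most five parts can be ``large'' and that each such large part internally uses at most $k-2$ colors (avoiding both red and blue internally); otherwise a red or blue internal edge inside such a part, combined with red or blue external edges to other parts, directly builds a monochromatic $F_n$ centered appropriately. Inductively each large part has size at most $gr_{k-2}(K_3; F_n) - 1$, while the remaining parts contribute only an additive $O(n)$, giving $N \leq 5 \cdot gr_{k-2}(K_3; F_n) + 10n - 4$.

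The \textbf{main obstacle} will be the detailed structural case analysis in the $t \leq 4n$ regime: one must carefully enumerate how the 2-coloring of the reduced graph can avoid a monochromatic $F_n$ while still leaving flexibility for individual parts to internally use red or blue, and in each configuration either directly produce a monochromatic $F_n$ in $K_N$ or force the five largest parts to avoid both boundary colors internally. A secondary obstacle is the $k=3$ base case itself, which does not follow from $R(F_n,F_n)$ alone and will likely require its own multi-case Gallai partition argument in the spirit of the proof of Theorem~\ref{Thm:F3}.
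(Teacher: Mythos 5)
Your overall strategy (iterated blow-ups for the lower bound, Gallai partition plus induction in steps of two for the upper bound) is the same as the paper's, but there is a genuine error in your lower-bound verification. You propose to join the five copies according to ``a fixed $2$-coloring of $K_5$ that itself avoids a monochromatic $F_n$,'' and you note this is automatic once $2n+1>5$. That condition is vacuous for $n\geq 3$ (any $2$-coloring of $K_5$, including the monochromatic one, contains no $F_n$ on $5$ vertices), and with it the construction fails: if the reduced $K_5$ has a monochromatic (say red) triangle on parts $H_1,H_2,H_3$, each of order at least $2n$, then any vertex of $H_1$ together with $n$ disjoint red edges between $H_2$ and $H_3$ is a red $F_n$. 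Your justification that ``a fan spanning multiple copies would project to a monochromatic $F_n$ in the reduced $K_5$'' is the wrong projection: a cross-copy monochromatic fan projects to a monochromatic \emph{triangle} in the reduced graph (center part plus one red edge between two parts both red-adjacent to it), not to a fan. The condition you actually need is that the reduced $2$-coloring of $K_5$ contains no monochromatic triangle, i.e., the unique decomposition of $K_5$ into two complementary $5$-cycles, which is exactly what the paper uses.

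A second, smaller problem is in your upper-bound sketch: the claim that each large part must ``avoid both red and blue internally; otherwise a red or blue internal edge \dots directly builds a monochromatic $F_n$'' is false. A part $H$ with all red edges to another part can contain up to $n-1$ pairwise disjoint internal red edges without creating a red $F_n$ (a center outside $H$ needs $n$ disjoint red edges inside $H$). The paper's mechanism is therefore to bound the internal red (and blue) matchings by $n-1$, delete at most $O(n)$ vertices per large part to destroy those edges, and only then apply induction with $k-2$ colors; the deleted vertices and the small parts supply the additive term. You flag the case analysis as the main obstacle, but the specific structural claim you state as the ``crux'' would fail as written. Finally, note that the paper only has $4n+1\leq R(F_n,F_n)\leq 6n$; your base cases should use the upper bound $6n$ rather than the unproven equality $R(F_n,F_n)=4n+1$ (the weaker bound still suffices for the claimed constants).
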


In our proofs, we make heavy use of the following known results for the $2$-color Ramsey numbers of fans.

\begin{proposition}[\cite{MR1738281, MR1418313, MR2479390, MR1670625}]\labelz{Prop:Fn}~
\bd
\item{\rm{(1)} } $R(F_{2}, F_{2}) = 9$;
\item{\rm{(2)} } $R(F_{3}, F_{3})=13$;
\item{\rm{(3)} } $4n+1\leq R(F_n,F_n)\leq 6n$.
\ed
\end{proposition}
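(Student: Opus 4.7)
The proposition collects three published Ramsey facts about fans. I sketch how I would prove each; all three reduce to the same type of extremal/matching analysis, and the upper bounds are where the real work lies.

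\textbf{Lower bound in (3).} The plan is to exhibit an explicit $2$-edge-coloring of $K_{4n}$ with no monochromatic $F_n$. Partition the vertex set into two blocks $A$ and $B$ of size $2n$ each. Color every edge inside $A$ or inside $B$ red, and every edge between $A$ and $B$ blue. Since $|V(F_n)| = 2n+1$ and each red component has only $2n$ vertices, the red graph contains no $F_n$. The blue graph is $K_{2n,2n}$, which is bipartite and hence triangle-free, so it contains no $F_n$ either. This gives $R(F_n,F_n)\ge 4n+1$, which in particular supplies the lower bounds $R(F_2,F_2)\ge 9$ and $R(F_3,F_3)\ge 13$ for parts (1) and (2).

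\textbf{Upper bound $R(F_n,F_n)\le 6n$.} Given any red/blue coloring of $K_{6n}$, averaging the degree sum forces some vertex $v$ to have monochromatic (say red) degree at least $\lceil (6n-1)/2\rceil = 3n$. Fix $U \subseteq N_R(v)$ with $|U|=3n$. If the red subgraph on $U$ admits a matching of size $n$, then together with $v$ this yields a red $F_n$ and we are done. Otherwise, the red matching number inside $U$ is at most $n-1$, so the endpoints of a maximum red matching form a vertex cover of the red edges in $U$ of size at most $2(n-1)$, and the remaining $U\setminus S$ is a blue clique $B$ with $|B|\ge n+2$. To promote $B$ to a blue $F_n$, choose a candidate center $w$ (either inside $B$ or among the $5n-2$ outside vertices) and look for $n$ disjoint blue edges inside its blue neighborhood. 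The clique $B$ already contributes a blue matching of size $\lfloor (n+1)/2\rfloor$ around any $w\in B$, and a counting argument on the remaining vertices --- which all have bounded red degree by the initial choice of $v$ --- is designed to recover the missing blue matching edges. Carrying this final step out with the total vertex count exactly $6n$ is the main technical hurdle.

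\textbf{Exact values in (1) and (2).} For the upper bound $R(F_2,F_2)\le 9$, I would start from $R(K_3,K_3)=6$: every $2$-coloring of $K_9$ contains a monochromatic (say red) triangle $T$, and the $6$ outside vertices send $18$ edges to $T$, so some $u\in V(T)$ receives at least three red edges to outside vertices $x,y,z$. Any red edge among $\{x,y,z\}$ creates a second red triangle through $u$, hence a red $F_2$; otherwise $xyz$ is a blue triangle, and a short case analysis on the blue edges from $V(T)$ to $\{x,y,z\}$ finishes the argument. For $R(F_3,F_3)\le 13$ I would run the analogous but longer case analysis: first extract a monochromatic $F_2$ (using e.g.\ the value of $R(K_3,F_2)$ as input), then analyze the remaining vertices by degree bounds to locate the extra matching edge that promotes $F_2$ to $F_3$. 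The hardest sub-case is when the monochromatic $F_2$ is ``saturated'' at its center, which has to be handled by switching colors and exploiting the $2$-color Ramsey input again.

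\textbf{Main obstacle.} The binding difficulty across all three parts is the matching-versus-clique tradeoff in the upper bound: a vertex-cover bound on the red edges inside a large red neighborhood quickly produces a blue clique, but turning that clique into a blue fan of the required size requires a careful extremal count. This is exactly what separates the clean lower bound $4n+1$ from the slightly slack general upper bound $6n$, and it also drives the additional case analysis needed to pin down the exact values $9$ and $13$ in the special cases $n=2$ and $n=3$.
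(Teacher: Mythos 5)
The paper does not prove this proposition at all: it is quoted from the literature (Gupta--Gupta--Sudan, Li--Rousseau, Lin--Li, and Radziszowski's survey), so there is no in-paper argument to compare against. Judged on its own, your lower-bound construction is correct and complete: two red cliques $K_{2n}$ joined by blue edges give a red graph whose components have only $2n < |V(F_n)| = 2n+1$ vertices and a blue $K_{2n,2n}$ that is triangle-free, so $R(F_n,F_n) \geq 4n+1$, which also yields the lower bounds $9$ and $13$ in parts (1) and (2).

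The upper bounds, however, contain genuine gaps. In the sketch of $R(F_2,F_2)\le 9$, the step ``the $6$ outside vertices send $18$ edges to $T$, so some $u\in V(T)$ receives at least three red edges'' is a non sequitur: all $18$ of those edges could be blue, so no red degree condition follows from the count alone, and the argument collapses at its first move. In the sketch of $R(F_n,F_n)\le 6n$, after the (correct) extraction of a blue clique $B$ with $|B|\ge n+2$ from $N_R(v)$, you are still far from a blue $F_n$, which needs $2n+1$ vertices; the clique only supplies a blue matching of size about $(n+1)/2$ around a candidate center, and the ``counting argument on the remaining vertices'' that is supposed to supply the other $\lceil n/2\rceil$ disjoint blue edges is never given. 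Worse, that argument is predicated on the remaining vertices having ``bounded red degree by the initial choice of $v$,'' but choosing $v$ of maximum red degree imposes no upper bound on the red degrees of other vertices, so the premise of the intended count is unjustified. Since you explicitly flag this step as the main technical hurdle and do not carry it out, the proposal establishes only the lower bounds; the upper bounds $9$, $13$, and $6n$ remain unproven and in the paper's context should simply be cited.
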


Given a coloring $G$ of $K_{n}$, let $k' = k'(G)$ be the number of colors inducing a subgraph with maximum degree at least $2$. Call each of these $k'$ colors \emph{useful} and call any remaining colors \emph{wasted}. Define $gr'_{k'}(K_{3} : H)$ to be the minimum integer $n$ so that every coloring of $K_{n}$ in which at most $k'$ colors are useful must contain either a rainbow triangle or a monochromatic copy of $H$. We will use the easy fact that for any positive integer $k$ and graph $H$, 
$$
gr_{k}(K_{3} : H) \leq gr'_{k}(K_{3} : H).
$$

\section{The case $n=2$}

In order to help prove Theorem~\ref{Thm:F2}, we first show the following.

\begin{theorem}\labelz{Thm:F2k'}
For all $k' \geq 0$, we have
$$
gr'_{k'}(K_{3} : F_{2}) = \begin{cases}
2 \cdot 5^{\frac{k'}{2}} + 1 & \text{ if $k'$ is even, or}\\
4 \cdot 5^{\frac{k' - 1}{2}} + 1 & \text{ if $k'$ is odd.}
\end{cases}
$$
\end{theorem}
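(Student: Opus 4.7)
The plan is to prove both bounds by induction on $k'$, with base cases $k' = 0$ (where every color is a matching, so the three edges of any triangle carry three distinct colors and yield a rainbow triangle) and $k' = 1$ (where a monochromatic $K_4$ witnesses the lower bound and a short Gallai-partition case analysis of $K_5$ handles the upper bound). Write $f(k') = 2 \cdot 5^{k'/2}$ if $k'$ is even and $f(k') = 4 \cdot 5^{(k'-1)/2}$ if $k'$ is odd, so the target is $gr'_{k'}(K_3:F_2) = f(k')+1$.

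For the lower bound inductive step, I would build a Gallai-coloring of $K_{f(k')}$ avoiding both a rainbow triangle and a monochromatic $F_2$ using exactly $k'$ useful colors. Take the two-coloring of $K_5$ whose color classes are each a $C_5$ — this exists because $R(K_3,K_3) = 6$ — and blow up each vertex into a copy of the inductive construction on $f(k'-2)$ vertices. The resulting graph has $5 f(k'-2) = f(k')$ vertices, adds exactly two new useful between colors (each forming a blown-up $C_5$, which has maximum degree at least $2 f(k'-2) \ge 2$), and still avoids rainbow triangles (only two between colors are used) and monochromatic $F_2$ in a between color (since blown-up $C_5$ is triangle-free), while a monochromatic $F_2$ in an inside color would lie inside a single block and is excluded by induction.

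For the upper bound, suppose for contradiction that a Gallai-coloring of $K_n$ with at most $k'$ useful colors and $n \ge f(k')+1$ contains no monochromatic $F_2$. Apply Theorem~\ref{Thm:G-Part} to obtain a Gallai partition $V_1, \ldots, V_t$ using at most two between colors $c_1, c_2$. The argument rests on three principles: (i) if $t \ge R(F_2,F_2) = 9$, a monochromatic $F_2$ in the reduced graph lifts back immediately; (ii) if a between color $c_i$ is wasted, then, since all $|V_j|\cdot|V_l|$ edges between two parts $V_j,V_l$ joined by $c_i$ must form a subgraph of maximum degree $\le 1$, both $V_j$ and $V_l$ are singletons — hence any non-singleton part is joined to the rest only through useful between colors; and (iii) the key \emph{star lemma}: if some vertex $u$ outside $V_1$ is joined to $V_1$ entirely in color $c$, then the $c$-edges inside $V_1$ form a star, for otherwise two disjoint $c$-edges in $V_1$ would combine with $u$ to form a monochromatic $F_2$ in color $c$ centered at $u$. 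Deleting the $c_1$- and $c_2$-star centers from $V_1$ produces a subset $V_1'$ on which both $c_1$ and $c_2$ are wasted, so at most $k'-2$ colors are useful there, and the inductive hypothesis gives $|V_1| \le f(k'-2) + 2$.

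The main obstacle is sharpening the naive bound $n \le t \cdot |V_1| \le 8(f(k'-2)+2)$ to the required $n \le 5 f(k'-2) = f(k')$. This requires invoking $R(K_3,K_3)=6$: either the reduced graph contains no monochromatic $K_3$, in which case $t \le 5$, or it does contain a monochromatic $K_3$ — say on $V_i, V_j, V_k$ in $c_1$ — and then the $F_2$-blow-up analysis forces at least two of $|V_i|, |V_j|, |V_k|$ to equal $1$, which severely restricts the possible reduced-graph shapes on $t \le 8$ vertices. Combining these structural restrictions with separate handling of the subcases where only one between color touches $V_1$'s boundary (so the star lemma eliminates only one useful color) yields the recursion $gr'_{k'} - 1 \le 5(gr'_{k'-2}-1)$, which together with the base cases gives the claimed formula. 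The bookkeeping of useful colors across recursions, and the boundary subcases in which $t$ is small and $V_1$ already carries almost all of the vertices, constitute the most delicate part of the proof.
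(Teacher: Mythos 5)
Your lower bound is essentially the paper's construction (iterated blow-ups of the triangle-free $2$-coloring of $K_5$, seeded with a monochromatic $K_4$ for odd $k'$ and a single wasted edge for even $k'$), and the overall architecture of your upper bound --- Gallai partition, $t \le R(F_2,F_2)-1 = 8$, reduction of the number of useful colors inside parts, induction in steps of two --- is also the paper's. But there is a genuine quantitative gap in the upper bound: your ``star lemma'' is too weak to produce the zero-slack recursion you need. Writing $f(k')$ for the claimed value minus $1$, the target is $n \le f(k') = 5f(k'-2)$ exactly. Your lemma (one outside vertex joined to $V_1$ in color $c$ forces the $c$-edges of $V_1$ to contain no $2K_2$) only yields $|V_i| \le f(k'-2)+2$ after deleting two ``centers,'' so even after you invoke $R(K_3,K_3)=6$ to force $t\le 5$ in the critical configuration, you get $n \le 5f(k'-2)+10$, which overshoots by $10$. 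The same problem already appears at $t=2$ with $k'$ odd: your bound gives $2(f(k'-1)+1) = f(k')+2 > f(k')$. No amount of reduced-graph bookkeeping closes an additive gap when the recursion has no slack, so ``combining these structural restrictions'' does not, as described, yield $gr'_{k'}-1 \le 5(gr'_{k'-2}-1)$.

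The missing ingredient is the second, stronger half of the paper's Claim~\ref{Clm:F2}: if at least \emph{two} vertices outside $V_1$ send color $c$ to all of $V_1$, then $c$ appears on \emph{at most one edge} of $V_1$ --- not merely a star. (Two adjacent $c$-edges $xa,xb$ inside $V_1$ together with two distinct outside vertices $u,v$ give the two triangles $xau$ and $xbv$ sharing $x$, i.e., a monochromatic $F_2$; combined with your no-$2K_2$ observation this leaves a single edge.) A single edge has maximum degree $1$, so $c$ is already \emph{wasted} inside $V_1$ with no deletions, and in the $t=5$ configuration with all parts of order at least $2$ every part loses both between-colors for free, giving $|V_i|\le f(k'-2)$ and $n \le 5f(k'-2)$ on the nose; likewise $t=2$ with both parts nontrivial gives $2f(k'-1) \le f(k')$. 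You would also need to treat the cases where some parts are singletons (where only the weaker one-vertex lemma applies but fewer large parts contribute) and the small base case $k'=2$, which the paper handles by a separate ad hoc analysis of the wasted matching in $K_{11}$; your proposed base cases $k'=0,1$ alone do not obviously cover it through the general inductive step.
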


Theorem~\ref{Thm:F2k'} follows immediately from Lemmas~\ref{Lem:F2Low} and~\ref{Lem:F2k'Up} and implies the upper bound for the odd case in Theorem~\ref{Thm:F2}. The upper bound for the even case in Theorem~\ref{Thm:F2} follows from Lemma~\ref{Lem:F2EvenUp}.
The lower bounds in Theorems~\ref{Thm:F2k'} and~\ref{Thm:F2} are proven in the following lemma.

\begin{lemma}\labelz{Lem:F2Low} For any $i \geq 1$, there exists a Gallai coloring of the complete graph on:
\bi
\item $4 \cdot 5^{i}$ vertices using $2i + 1$ colors which contains no monochromatic copy of $F_{2}$.
\item $2 \cdot 5^{i}$ vertices with $2i$ useful colors which contains no monochromatic copy of $F_{2}$.
\item $\frac{83}{2}\cdot 5^{\frac{2i-4}{2}} - \frac{1}{2}$ vertices using $2i$ colors which contains no monochromatic copy of $F_{2}$.
\ei
\end{lemma}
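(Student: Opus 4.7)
The plan is to build all three constructions via iterated pentagon blow-ups from small bases, using the following blow-up lemma: if $H$ is a Gallai coloring of $K_n$ with no monochromatic triangle and $H'$ is a Gallai coloring of $K_{n'}$ with no monochromatic $F_2$ on disjoint color palettes, then the blow-up $H[H']$, obtained by replacing each vertex of $H$ by a copy of $H'$ with between-block edges colored by $H$ and within-block edges by $H'$, is a Gallai coloring of $K_{nn'}$ with no monochromatic $F_2$. One checks this by case analysis: a monochromatic bowtie would either be confined to one block (ruled out by $H'$) or span multiple blocks, in which case its two disjoint same-color edges force a monochromatic triangle in $H$; rainbow triangles use at most the two outer colors (when the triangle spans three blocks) or one outer plus one inner color (when it spans two), never three distinct. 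Iterating this with $H$ the pentagon 2-coloring of $K_5$ (no monochromatic triangle) starting from a single vertex produces a family $E_i$ of Gallai colorings on $5^i$ vertices with $2i$ useful colors, no monochromatic triangle, and no monochromatic $F_2$.

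For the second bullet I take $E_i[K_2]$: each vertex of $E_i$ is doubled, with the new within-pair edges colored by a single new color. This new color is a perfect matching of maximum degree $1$, hence wasted, while the $2i$ colors of $E_i$ remain useful. The result is a Gallai coloring on $2\cdot 5^i$ vertices with exactly $2i$ useful colors and no monochromatic $F_2$.

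For the first bullet I instead take $E_i[K_4]$: each vertex of $E_i$ becomes a $K_4$ in a single new color of maximum degree $3$, hence useful. Since a $K_4$ has only four vertices and so contains no $F_2$, and $E_i$ has no monochromatic triangle, the blow-up lemma gives a Gallai coloring on $4\cdot 5^i$ vertices with $2i+1$ colors (all useful) and no monochromatic $F_2$.

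For the third bullet I proceed by induction on $i\geq 2$. The base case $x_2=41$ calls for a specific Gallai coloring on $41$ vertices using only four colors: take the pentagon blow-up of the $8$-vertex $2$-coloring extremal for $R(F_2,F_2)=9$ (two red $K_4$'s and a blue $K_{4,4}$, in inner colors $\rho,\sigma$), giving $40$ vertices in four colors $\alpha,\beta,\rho,\sigma$, and adjoin a single extra vertex $u$ whose $40$ new edges are colored using only these same four colors, with a non-uniform pattern that splits $u$'s edges inside each block according to the internal $K_4\cup K_4$ bipartition. For $i\geq 3$, apply a pentagon blow-up to the $(i-1)$-case construction (producing $5x_{i-1}$ vertices and $2i$ colors) and adjoin two further peripheral vertices in an analogous fashion, yielding the recursion $x_i=5x_{i-1}+2$, which combined with $x_2=41$ gives exactly $x_i=(83\cdot 5^{i-2}-1)/2$. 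The main obstacle is the design of the $41$-vertex base: the extra vertex $u$ must be attached without introducing a fifth color, yet the twin requirements of rainbow-avoidance (forcing $|\{d_i,d_j,c(ij)\}|\leq 2$ for every pair of pentagon blocks) and $F_2$-avoidance (forbidding $u$ from having $\rho$- or $\sigma$-edges covering an entire $K_4$-half of any block, since a $K_4$ in one color contains a matching of size two, and forcing $u$'s $\alpha$- and $\beta$-neighborhoods to be appropriately independent in the pentagon and pentagram respectively, each of size at most two, for a total under five) interact intricately. Resolving them requires coloring different halves of a single block with different $d$-values and then verifying the resulting finite set of sub-cases.
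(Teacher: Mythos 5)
Your constructions for the first two bullets are correct and are essentially the paper's: iterating the pentagon blow-up and substituting $K_{4}$ (respectively $K_{2}$ in a wasted color) into the atoms is the same coloring the paper builds from the inside out, and your blow-up lemma is a valid packaging of the verification. The problem is the third bullet, where you have not actually produced the required coloring, and the route you propose for the base case cannot be completed. Write $B_{1},\dots,B_{5}$ for the five $K_{8}$-blocks of the $40$-vertex pentagon blow-up, with $\alpha,\beta$ the outer pentagon/pentagram colors and $\rho,\sigma$ the inner colors, and let $d(v)$ denote the color you assign to the edge $uv$. For any two blocks $B_{i},B_{j}$ joined in color $\gamma$, the no-rainbow-triangle condition on triangles through $u$ forces every vertex of $(B_{i}\cup B_{j})\setminus d^{-1}(\gamma)$ to receive one common $d$-value. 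No block can be entirely monochromatic under $d$: a block all in $\rho$ or all in $\sigma$ puts two disjoint $\rho$- (or $\sigma$-) edges into the relevant monochromatic neighborhood, and a block all in $\alpha$ (or $\beta$) propagates through the pentagon and pentagram constraints until two blocks joined by $\beta$ (or $\alpha$) are both swallowed by $d^{-1}(\beta)$, giving two disjoint edges of that color inside $N(u)$ and hence an $F_{2}$ centered at $u$. Once every block sees at least two $d$-values, the displayed constraint applied to every pentagon edge forces each block's $d$-palette to be exactly $\{\alpha,\text{c}\}$ with $c$ constant around the (connected) pentagon, and the pentagram forces it to be $\{\beta,c'\}$ with $c'$ constant; together these give palette $\{\alpha,\beta\}$ on all five blocks. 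But then $d^{-1}(\alpha)$ meets all five blocks, and since $C_{5}$ restricted to any four vertices contains two disjoint edges, $u$'s $\alpha$-neighborhood spans two disjoint $\alpha$-edges, an $F_{2}$ centered at $u$. So no admissible coloring of $u$'s $40$ edges by the four existing colors exists, and the "finite set of sub-cases" you defer to is empty of solutions.

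The paper avoids this entirely: rather than attaching a new dominating vertex, it enlarges one of the five $K_{8}$-blocks to a $9$-vertex block $A_{4}'$ (four $K_{2}$'s in colors $1,2,3,4$ plus a $K_{1}$, arranged as a blow-up of the triangle-free $2$-colored $K_{5}$), so the extra vertex's edges live inside a single part of the Gallai partition and the two "reused" colors occur only on single edges buried deep in the structure; the inductive $+2$ is achieved the same way, by swapping two copies of $A_{4}'$ (in different blocks of the new pentagon level) for $10$-vertex blocks $A_{2i+1},A_{2i+2}$ that carry the two new colors on one edge each. Your recursion $x_{i}=5x_{i-1}+2$ with $x_{2}=41$ does give the claimed value $\frac{83}{2}\cdot 5^{i-2}-\frac{1}{2}$, but both the base case and your "adjoin two further peripheral vertices" induction step need to be replaced by a mechanism of this block-enlargement type.
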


\begin{proof}
For the first item in the statement, define $G_{0}$ to be a
monochromatic copy of $K_{4}$, say with color $1$. Suppose that we
have constructed $G_{i}$, a coloring of a complete graph on $4 \cdot
5^{i}$ vertices using colors from $[2i + 1]$ with no rainbow
traingle and no monochromatic copy of $F_{2}$. We construct $G_{i +
1}$ by making $5$ copies of $G_{i}$ and inserting all edges between
these copies to form a blow-up of the unique $2$-coloring of $K_{5}$
with no monochromatic triangle using colors $2i + 2$ and $2i + 3$.
This is a Gallai coloring of the complete graph on $4 \cdot 5^{i +
1}$ vertices using colors from $[2i + 3]$ with no monochromatic copy
of $F_{2}$.

For the second item in the statement, define $G_{0}$ to be a
monochromatic copy of $K_{2}$ with a wasted color, say color $0$.
Using this $G_{0}$ in place of $G_{0}$ in the previous construction,
after $i$ iterations, we obtain a Gallai coloring of a complete
graph $G_{i}$ of order $2\cdot 5^{i}$ with $2i$ useful colors which
contains no monochromatic copy of $F_{2}$. This construction makes
heavy use of the wasted color, color $0$.

For the third item in the statement, we use the following inductive
construction. Let $G_{1}$ be a copy of $K_{4}$ colored entirely by
color $1$ and let $G_{2}$ be a copy of $K_{8}$ consisting of two
copies of $G_{1}$ joined by all edges of color $2$. Now suppose we
have constructed a Gallai colored complete graph $G_{2i - 2}$ using
$2i - 2$ colors which contains no monochromatic copy of $F_{2}$. For
$k = 2i$, we construct the graph $G_{2i}$ by making five copies of
$G_{2i - 2}$ and inserting edges of colors $2i$ and $2i - 1$ between
the copies to form a blow-up of the unique $2$-coloring of $K_{5}$
with no monochromatic triangle. This coloring contains no rainbow
triangle and no monochromatic copy of $F_{2}$. In this way, we
construct $G'_{4}$, a colored complete graph on $40$ vertices.

For $k = 2i \geq 6$, we then extend this construction as follows.
Let $A'_4$ be a complete graph of order $9$
consisting of four copies of $K_{2}$ with colors $1, 2, 3, 4$, and
$K_1$, and inserting edges of colors $1$ and $2$ between these
copies to form a blow-up of the unique $2$-colored $K_{5}$
containing no monochromatic triangle.

For $j \geq 5$, let $A_{j}$ be a complete graph of order $10$
consisting of five copies of $K_{2}$ with colors $1, 2, 3, 4$, and
$j$, and inserting edges of colors $1$ and $2$ between these copies
to form a blow-up of the unique $2$-colored $K_{5}$ containing no
monochromatic triangle. See Figure~\ref{Fig:Aj} for a diagram of
this construction.

Within $G'_{4}$ as defined above, there are $5$
copies of $G_{2}$. We replace one of these copies by $A'_{4}$ to create a new graph $G_{4}$ with 
$|G_{4}| = 41$. Within $G_{6}$ as defined above (using $G_{4}$ in the construction), there are $5$
copies of $G_{4}$. We replace two of the copies of $A'_{4}$ by $A_{5}$ and
$A_{6}$ respectively. Note that these copies of $A_{4}'$ to be replaced must be chosen from different copies of
$G_{4}$ from the construction of $G_{6}$ to avoid creating a
monochromatic copy of $F_{2}$. This replacement adds $2$ vertices to
$G_{6}$ resulting in $|G_{6}| = 5 \times 41 + 2$. In the induction
step, it is easy to see that the same replacement can always be made
to replace two further copies of $A'_{4}$ by $A_{2i + 1}$ and $A_{2i
+ 2}$ so
$$
|G_{2i + 2}| = 5 \left( 41 \cdot 5^{\frac{2i - 4}{2}} + \frac{1}{2}\cdot 5^{\frac{2i - 4}{2}} - \frac{1}{2} \right) + 2 = 41 \cdot 5^{\frac{2i-2}{2}} + \frac{1}{2}\cdot 5^{\frac{2i - 2}{2}} - \frac{1}{2},
$$
as claimed.
\end{proof}

\begin{figure}[H]
\begin{center}
\includegraphics{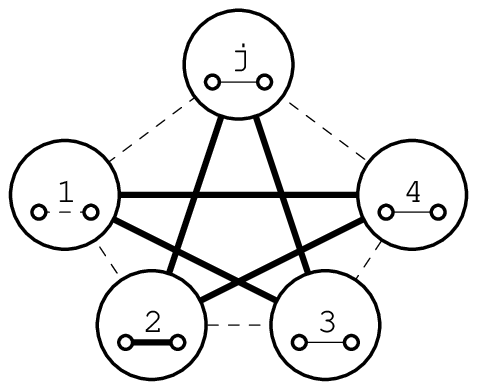}
\caption{The coloring $A_{j}$ of $K_{10}$\labelz{Fig:Aj}}
\end{center}
\end{figure}

\begin{lemma}\labelz{Lem:F2k'Up}
For all $k \geq 3$ and $0 \leq k' \leq k$, we have
$$
gr'_{k'}(K_{3} : F_{2}) \leq \begin{cases}
2 \cdot 5^{\frac{k}{2}} + 1 & \text{ if $k$ is even, or}\\
4 \cdot 5^{\frac{k - 1}{2}} + 1 & \text{ if $k$ is odd.}
\end{cases}
$$
\end{lemma}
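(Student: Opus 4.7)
The plan is to prove this by induction on $k$ in steps of two, matching the parity-dependence of the claimed bound, and using $R(F_2,F_2)=9$ from Proposition~\ref{Prop:Fn}(1) as the ultimate base. For $k\in\{1,2\}$ the stated bounds already exceed $9$, so those cases reduce to the classical Ramsey number; the hypothesis $k\geq 3$ in the lemma then places us genuinely in the inductive step. Fix such a $k$ and consider a Gallai coloring of $K_n$ with at most $k$ useful colors and $n$ equal to the stated bound; assuming for contradiction there is no monochromatic $F_2$, Theorem~\ref{Thm:G-Part} produces a Gallai partition $V_1,\ldots,V_t$ whose reduced graph is $2$-colored in some colors $c_1,c_2$.

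The first structural step is to show that $t\leq 5$ and that, in the critical case $t=5$, the reduced graph is the unique $2$-coloring of $K_5$ whose color classes each form a $C_5$. When $t\geq 9$, the reduced graph itself hosts a monochromatic $F_2$ by Proposition~\ref{Prop:Fn}(1), which lifts to one in $K_n$. For $6\leq t\leq 8$, the value $R(3,3)=6$ yields a monochromatic triangle on three parts $V_a,V_b,V_c$ in the reduced graph, and a case analysis on the sizes of these parts and their adjacencies to the remaining $t-3$ parts produces a monochromatic $F_2$: the key observation is that since $n$ is large relative to $t$, at least two of $|V_a|,|V_b|,|V_c|$ are $\geq 2$, after which a bowtie centered in the third part and with disjoint edges across the other two is directly exhibited. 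The same expansion argument rules out any monochromatic triangle in the reduced graph when $t=5$, forcing the $C_5\cup C_5$ pattern.

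In the principal $t=5$ case, I would argue that neither $c_1$ nor $c_2$ is useful inside any part $V_i$. Indeed, $V_i$ has exactly two $c_1$-neighbor parts $V_j,V_l$ in the reduced $C_5$, so a $c_1$-colored $P_3$ on vertices $a,b,c\in V_i$ extends to a $c_1$-colored $F_2$ centered at $b$ by picking any $x\in V_j$ and any $y\in V_l$: all four edges from $b$ to $\{a,c,x,y\}$ are $c_1$ (two from the $P_3$, two from between-edges), while the pair $ax$ and $cy$ are disjoint $c_1$-edges supplying the two triangle arms of the bowtie. Consequently every $V_i$ uses at most $k-2$ useful colors, so the inductive hypothesis applied to each part gives $|V_i|\leq n_{k-2}-1$, where $n_{k-2}$ denotes the stated bound at $k-2$. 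Summing over the five parts yields $n\leq 5(n_{k-2}-1)=n_k-1$, contradicting $n=n_k$ after a direct arithmetic check in both parities.

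The main obstacle lies in the remaining small cases $t\in\{2,3,4\}$, where the factor of five does not emerge from the partition count. Here I would analyze each possible $2$-coloring of the reduced $K_t$, use the same expansion argument to eliminate configurations that already force a monochromatic $F_2$, and handle the surviving configurations by applying induction to the largest part $V_1$ (observing that at least one of $c_1,c_2$ must fail to be useful inside $V_1$ lest $F_2$ be produced) together with a direct $F_2$-construction tailored to the local reduced structure. This small-$t$ bookkeeping, and in particular the verification that the arithmetic of the bound still closes when only one of the two between-colors can be ``saved'' from $V_1$, is where the bulk of the technical work will sit.
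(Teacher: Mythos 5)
Your overall strategy (Gallai partition with $t$ minimized, bound $t$ via $R(F_2,F_2)=9$, make both between-colors wasted inside the parts, and recurse) is the same as the paper's, and your $t=5$ argument that a monochromatic $P_3$ inside a part extends to a bowtie using the two neighbouring parts of the reduced $C_5$ is correct and clean. But two of your reductions do not go through as stated. First, your claim that for $6\leq t\leq 8$ (and likewise for a monochromatic triangle when $t=5$) ``at least two of $|V_a|,|V_b|,|V_c|$ are $\geq 2$'' because $n$ is large is false: the monochromatic triangle produced by $R(3,3)=6$ sits on three \emph{particular} parts, and nothing forces those to be the large ones (e.g.\ one huge part plus seven singletons realizes $t=8$ with the triangle living entirely on singletons). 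So you cannot kill $t\in\{6,7,8\}$ outright, nor force the $C_5\cup C_5$ pattern at $t=5$. The correct conclusion, which the paper draws, is only that any monochromatic triangle in the reduced graph contains at most one part of order $\geq 2$, hence few parts are large; one then argues that red and blue are wasted (possibly after deleting one vertex) inside the large parts and closes by summing $|H_i|\leq n(k-2)-1$ plus the singletons.

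Second, your base cases are not what you say they are. For $k=1$ the claimed bound is $4\cdot 5^0+1=5<9$, so it does not ``exceed $9$'' and does not follow from $R(F_2,F_2)$; one must show directly that $K_5$ minus a wasted matching contains a monochromatic $F_2$. For $k'=2$ the bound is $11>9$, but the graph is not a $2$-coloring: the wasted colors form a matching of up to five edges, and contracting them can leave only $11-p\leq 8<9$ vertices, so the Ramsey number alone does not apply. The paper spends a substantial case analysis ($p=3,4,5$ wasted edges) precisely here, and your inductive step at $k=4$, $t=5$ needs this $k'=2$ case. Finally, note that ``induction in steps of two'' is incompatible with the $t=2$ subcase, where only one colour lies between the two parts and the recursion lands at $k-1$, not $k-2$ (and the arithmetic $2(n(k-1)-1)<n(k)$ is tight); you need strong induction on all smaller values. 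Together with the deferred $t\in\{2,3,4\}$ bookkeeping you acknowledge, these are genuine gaps rather than routine details.
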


\begin{proof}
For $k' \leq k$, let
$$
n = n(k') = \begin{cases}
2 \cdot 5^{\frac{k'}{2}} + 1 & \text{ if $k'$ is even, or}\\
4 \cdot 5^{\frac{k' - 1}{2}} + 1 & \text{ if $k'$ is odd.}
\end{cases}
$$
Consider a $k$-coloring $G$ of $K_{n}$ in which at most $k'$ colors are useful and suppose for a contradiction that $G$ contains no rainbow triangle and no monochromatic copy of $F_{2}$. By Theorem~\ref{Thm:G-Part}, there is a Gallai partition of $G$, say with $t$ parts and choose such a partition with $t$ minimum. Let $H_{1}, H_{2}, \dots, H_{t}$ be the parts of this partition. Note that to avoid a rainbow triangle, at most one wasted color can appear incident to each vertex, meaning that the wasted colors all together induce a matching.

\begin{claim}\labelz{Clm:F2}
If one vertex has all red edges to a part $A$, then there is no red copy of $2K_{2}$ as a subgraph of $A$. If at least two vertices have red edges to a part $A$, then red appears on at most one edge in $A$.
\end{claim}

\begin{proof}
First suppose $u$ has all red edges to a part $A$ and there is a red copy of $2K_{2}$ within $A$. Then $u$ is the center of a red copy of $F_{2}$ using the two disjoint red edges within $A$. Otherwise let $u$ and $v$ be two vertices with red edges to a part $A$ and suppose a vertex $w \in A$ has two incident red edges within $A$, say to $x$ and $y$. Then $w$ is the center of a red copy of $F_{2}$ with triangles $wux$ and $wvy$, a contradiction.
\end{proof}

If $k' = 0$, then any coloring of $K_{n}$ with $n \geq 3$ contains a rainbow triangle. If $k' = 1$, then the monochromatic $K_{5}$ minus a (wasted) matching must contain a monochromatic copy of $F_{2}$.

Next suppose $k' = 2$ so $n = 11$, say with red and blue being the useful colors. Let $p$ be the number of edges in wasted colors. If we contract the edges in wasted colors, we obtain a $2$-colored graph of order $11 - p$ so if $p \leq 2$, this graph immediately contains a monochromatic copy of $F_{2}$ by Proposition~\ref{Prop:Fn}. Thus, we must have $3 \leq p \leq 5$. With $k' = 2$, there is a Gallai partition of $V(G)$ (not necessarily the same one as considered before) in which each wasted edge is a part of order $2$ and all other vertices are parts of order $1$. It is this partition that we will consider for the remainder of the analysis of the case $k' = 2$.

If $p = 5$, then the reduced graph is a copy of $K_{6}$ where $5$ of the vertices represent parts of order $2$. In this reduced graph, there is a monochromatic triangle, say in red. Since at least two of the parts used in this triangle must have order $2$, this produces a red copy of $F_{2}$.

Next suppose $p = 3$ and let $H_{1}, H_{2}$, and $H_{3}$ be the parts of order $2$ formed by these wasted edges. To avoid creating a monochromatic copy of $F_{2}$, not all edges between these parts can be the same color so suppose $H_{2}$ is joined by red edges to $H_{1} \cup H_{3}$ and the edges between $H_{1}$ and $H_{3}$ are blue. Let $R = V(G) \setminus (\cup_{i = 1}^{3} V(H)_{i})$ and observe that no vertex in $R$ has blue edges to both $H_{1}$ and $H_{3}$ and no vertex in $R$ has blue edges to both of either $\{H_{1}, H_{2}\}$ or $\{H_{2}, H_{3}\}$. Hence, every vertex in $R$ has blue edges to $H_{2}$, so by Claim~\ref{Clm:F2}, there is at most one blue edge within $R$. This means that $R$ induces a red $K_{5}$ minus an edge, which contains a red copy of $F_{2}$.

Finally suppose $p = 4$. Following the proof of the case $p = 3$ above by defining the parts $H_{1}, H_{2}$, and $H_{3}$ and the set $R$, we again see that $R$ contains at most one blue edge along with one wasted edge. This means that $R$ induces a red copy of $K_{5}$ minus the two disjoint edges, which again contains a red copy of $F_{2}$, completing the proof in the case $k' = 2$.

Thus, suppose for induction that $k' \geq 3$ and the result holds for smaller values of $k'$. By the minimality of $t$, if $t \leq 3$, then $t = 2$ so first suppose $t = 2$ say with red edges between the two parts. If both parts have order at least $2$, then by Claim~\ref{Clm:F2}, red is wasted within each part. Then by induction on $k'$, we have
$$
|G| = |H_{1}| + |H_{2}| \leq 2(n(k' - 1) - 1) < n,
$$
a contradiction. If one part has order $1$, then by Claim~\ref{Clm:F2}, the other part $A$ contains no red copy of $2K_{2}$. By removing a single vertex from $A$, we can remove all but at most one red edge from $A$, leaving red wasted within $A$. Then apply induction within $A$ (minus that one vertex) to arrive at a contradiction. We may therefore assume $t \geq 4$ and there are two colors, say red and blue, that are both connected in the reduced graph.

Certainly $t \leq 8$ by Proposition~\ref{Prop:Fn} so $4 \leq t \leq 8$. If $t \geq 6$, then there exists a monochromatic triangle in the reduced graph of $G$. To avoid a copy of $F_{2}$, at least two of the parts represented in this monochromatic triangle must have order $1$. Thus, if $t \geq 6$, then at most $4$ parts have order at least $2$, and red and blue are both wasted (possibly after the removal of one vertex) so we may apply induction on $k'$ within these parts to arrive at a contradiction.

Finally suppose $t = 5$. If all parts have order at least $2$, then red and blue are both wasted within all the parts. If a part has order $1$, then by removing at most one vertex from all other parts, red and blue are both wasted within the remaining parts. In either case, we have
$$
|G| = \sum_{i = 1}^{t} |H_{i}| \leq 5(n(k' - 2) - 1) + 5 < n(k'),
$$
a contradiction, completing the proof of Lemma~\ref{Lem:F2k'Up}.
\end{proof}

\begin{lemma}\labelz{Lem:F2EvenUp}
For all $k \geq 2$, we have
$$
gr_{k}(K_{3} : F_{2}) \leq n_{k} = \begin{cases}
9 & \text{ if $k = 2$, or}\\
\frac{83}{2}\cdot 5^{\frac{k-4}{2}}+\frac{1}{2} & \text{ if $k \geq 4$ is even, or}\\
4 \cdot 5^{\frac{k - 1}{2}} + 1 &
\text{ if $k$ is odd, ~$k\geq 5$.}
\end{cases}
$$
\end{lemma}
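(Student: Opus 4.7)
The cases $k=2$ and odd $k\ge 5$ follow immediately from known results: the case $k=2$ from $R(F_2,F_2)=9$ in Proposition~\ref{Prop:Fn}, and the odd cases from Theorem~\ref{Thm:F2k'} together with the general inequality $gr_k(K_3:F_2)\le gr'_k(K_3:F_2)$. The remaining task is the even case $k\ge 4$, which I would handle by induction in steps of two, with the base case $k=4$ (where the target bound $42$ is strictly sharper than the $51$ obtainable from Theorem~\ref{Thm:F2k'}) verified by the same Gallai partition framework that drives the inductive step.

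For a $k$-coloring of $K_{n_k}$ with no rainbow triangle I would apply Theorem~\ref{Thm:G-Part} to obtain a Gallai partition $H_1,\ldots,H_t$ with $t$ minimum; then $t\le 8$ by $R(F_2,F_2)=9$, and minimality excludes $t=3$. When $t=2$, Claim~\ref{Clm:F2} forces the reduction color (say red) to be wasted inside each part (after discarding at most one vertex when a part is a singleton), so each part uses at most $k-1$ useful colors and Theorem~\ref{Thm:F2k'} bounds each $|H_i|$ by $4\cdot 5^{(k-2)/2}$; summing yields roughly $40\cdot 5^{(k-4)/2}$, comfortably below $n_k$. When $t\ge 6$, every monochromatic triangle in the reduced graph forces at least two of its three parts to be singletons by Claim~\ref{Clm:F2}, so at most four parts have order greater than one and the induction hypothesis on $k-2$ applies inside each large part after trimming one vertex to make both reduction colors wasted.

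The delicate cases are $t\in\{4,5\}$. For $t=5$ the reduced graph must be the unique triangle-free $2$-coloring of $K_5$ (a red pentagon with its blue complement), and Claim~\ref{Clm:F2} applied through the pentagon adjacencies forces each part of order at least two to contain at most one red and at most one blue edge; removing at most two vertices per part then leaves only the remaining $k-2$ colors in the interior, and induction on $k-2$ closes the case with arithmetic to spare. For $t=4$, any monochromatic triangle in the reduced $K_4$ again forces two singletons via Claim~\ref{Clm:F2}, and the only triangle-free $2$-colorings of $K_4$ (the $C_4$-plus-perfect-matching decomposition, or the $P_4$-plus-$P_4$ decomposition) are dispatched directly.

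The main obstacle I anticipate is the $t=5$ case of the base $k=4$: the crude estimate $5(n_2-1)+5=55$ and even the refined bound $|H_i|\le 10$ (totalling $50$) both exceed the target $42$, so the useful-colors machinery of Theorem~\ref{Thm:F2k'} alone cannot close this case. I expect to need a sharper argument, most likely a second-level Gallai partition inside each $H_i$ using the two non-reduction colors, combined with careful cross-part counting via Claim~\ref{Clm:F2} applied simultaneously to both pentagon colors, to save the final handful of vertices and reach the exact bound $42$.
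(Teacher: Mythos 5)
Your framework (Gallai partition with $t$ minimum, $t\le 8$, the split into $t=2$, $t\ge 6$, and $t\in\{4,5\}$, induction in steps of two) matches the paper's, and your handling of $k=2$, odd $k$, and $t=2$ is fine. But there are two genuine gaps. First, your $t=5$ inductive step is quantitatively too lossy even for $k\ge 6$: the margin is $n_k-5(n_{k-2}-1)=3$, so you may delete at most $2$ vertices in total from the five parts, whereas ``at most one red and one blue edge per part, remove two vertices per part'' deletes up to $10$. The paper closes this by upgrading Claim~\ref{Clm:F2} to a global statement: through the pentagon adjacencies, a red edge inside one part together with a red edge inside a different part already yields a red $F_2$ (the endpoint of one edge sees both endpoints of the other in red), so there is a \emph{total} of at most one red and one blue edge across all five parts, and only $2$ deletions are needed.

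Second, and more seriously, you explicitly leave the base case $k=4$ unresolved. The paper's missing ingredient is not a second-level Gallai partition but a self-contained Ramsey-type claim (Claim~\ref{Clm:F2K8}): any Gallai coloring of $K_9$ with at most one edge in a third color and all other edges in two colors contains a monochromatic $F_2$, and likewise for $K_{10}$ with at most one edge in each of two extra colors. Proved by direct case analysis on the structure around the exceptional edge, this claim caps each large part at $8$, $9$, $10$, or $11$ vertices depending on how many exceptional edges Claim~\ref{Clm:F2} permits inside it, and these caps are what bring the totals in the cases $r=1,\dots,5$ down to $18$, $26$, $33$, $40$, $41$, all below $42$. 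Without this claim (or an equivalent), none of your $k=4$ cases with $t\ge 4$ closes, so the proof as proposed is incomplete at precisely the point you flagged.
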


\begin{proof}
The case where $k = 2$ follows from the $2$-color Ramsey number and
the case where $k$ is odd follows from Lemma~\ref{Lem:F2k'Up} so
suppose $k \geq 4$ is even. Let $G$ be a Gallai coloring of the
complete graph of order
$$
n = n_{k} = \begin{cases}
9 & \text{ if $k = 2$, or}\\
\frac{83}{2}\cdot 5^{\frac{k-4}{2}}+\frac{1}{2} & \text{ if $k \geq 4$ is even, or}\\
4 \cdot 5^{\frac{k - 1}{2}} + 1 &
\text{ if $k$ is odd, ~$k\geq 5$.}
\end{cases}
$$
which contains no monochromatic copy of $F_{2}$.
Consider a Gallai partition, say with $t$ parts where the partition is
chosen so that $t$ is minimum. Let red and blue be the colors used
between parts of the partition. Then certainly $2 \leq t \leq 8$
since $R(F_{2}, F_{2}) = 9$. Suppose the parts of this partition are
$H_{i}$ for $1 \leq i \leq t$ and that $|H_{i}| \geq |H_{i + 1}|$
for all $i$.

First suppose $k = 4$, so $n = 42$. If $t \leq 3$, then by the
minimality of $t$, we may assume $t = 2$, say with all red edges in
between the two parts. If $|H_{2}| \geq 2$, then by
Claim~\ref{Clm:F2}, there is at most one red edge within either
$H_{1}$ or $H_{2}$, meaning that the removal of at most one vertex
leaves both parts with no red edges inside. This implies that
$$
|G| = |H_{1}| + |H_{2}| \leq 2(n_{k - 1} - 1) + 1 = 2n_{k - 1} - 1 < n,
$$
a contradiction. On the other hand, if $|H_{2}| = 1$, then there is no red copy of $2K_{2}$ within $H_{1}$ so the removal of at most $2$ vertices within $H_{1}$ destroys all red edges within $H_{1}$. This implies that
$$
|G| = |H_{1}| + 1 \leq [(n_{k - 1} - 1) + 2] + 1 < n,
$$
again a contradiction. We may therefore assume that $t \geq 4$. Since $R(F_{2}, F_{2}) = 9$, we have $4 \leq t \leq 8$. Let $r$ be the number of ``large'' parts with order at least $2$. To avoid creating a monochromatic copy of $F_{2}$, there can be no monochromatic triangle within the reduced graph corresponding to these large parts so this immediately means that $r \leq 5$.


Next we show a claim that will be helpful in the remainder of the proof.

\begin{claim}\labelz{Clm:F2K8}
If $G$ is a Gallai coloring of $K_{10}$ containing at most one edge in each of two colors and all remaining edges in two other colors, then $G$ contains a monochromatic copy of $F_{2}$. If $G$ is a Gallai coloring of $K_{9}$ containing at most one edge in one color and all remaining edges in two other colors, then $G$ contains a monochromatic copy of $F_{2}$.
\end{claim}

\begin{proof}
For the first statement, contract one of the two edges with a color that appears on only that edge to arrive in the situation of the second statement. For the second statement, let $G$ be a Gallai coloring of $K_{9}$ in which there is at most one edge in one color (say green) and all remaining edges are either red or blue. Since $R(F_{2}, F_{2}) = 9$, there must exist such a green edge, say $uv$. Let $A$ be the set of vertices with red edges to $u$ and $v$ and let $B$ be the set of vertices with blue edges to $u$ and $v$, and suppose, without loss of generality, that $|A| \geq |B|$. By Claim~\ref{Clm:F2}, there is at most one red edge in $A$ and at most one blue edge in $B$.

If $|A| \geq 5$, then $A$ contains a blue copy of $K_{5}$ minus at most one edge, which contains a blue copy of $F_{2}$, for a contradiction. This means we may assume that $|A| = 4$ and $|B| = 3$, say with $A = \{a_{1}, a_{2}, a_{3}, a_{4}\}$ and $B = \{b_{1}, b_{2}, b_{3}\}$. Since $B$ contains at most one blue edge, suppose $b_{3}$ has red edges to $\{b_{1}, b_{2}\}$.

First suppose $A$ contains no red edge. Then no vertex in $B$ has two blue edges to $A$ to avoid creating a blue copy of $F_{2}$. In particular, $b_{3}$ has at least $3$ red edges to $A$. Then $b_{1}$ and $b_{2}$ each have at least two red neighbors in common with $b_{3}$, creating a red copy of $F_{2}$ centered at $b_{3}$. We may therefore assume that $A$ contains a red edge, say $a_{1}a_{2}$, so all other edges within $A$ are blue.

Next suppose $B$ contains no blue edge. Then no vertex in $B$ can have red edges to both $a_{1}$ and $a_{2}$, meaning that one of $a_{1}$ or $a_{2}$ (suppose $a_{1}$) has two blue edges to $B$, say to $b_{1}$ and $b_{2}$. To avoid creating a blue copy of $F_{2}$, this means that $b_{1}$ and $b_{2}$ must both have all red edges to $\{a_{3}, a_{4}\}$. Then to avoid a red copy of $F_{2}$, the vertex $b_{3}$ must have blue edges to $\{a_{3}, a_{4}\}$, meaning that $b_{3}$ must have red edges to both $a_{1}$ and $a_{2}$, making a red copy of $F_{2}$ for a contradiction. This means that $B$ must contain a blue edge, say $b_{1}b_{2}$.

If $b_{3}$ has a red edge to either $a_{1}$ or $a_{2}$ (say $a_{1}$), then to avoid a red copy of $F_{2}$, $a_{1}$ has blue edges to both $b_{1}$ and $b_{2}$, creating a blue copy of $F_{2}$ centered at $a_{1}$. Thus, $b_{3}$ must have blue edges to both $a_{1}$ and $a_{2}$. To avoid a blue copy of $F_{2}$, this also implies that $b_{3}$ has red edges to $\{a_{3}, a_{4}\}$.

If $b_{1}$ has red edges to both $a_{3}$ and $a_{4}$, then to avoid a red copy of $F_{2}$ centered at $b_{3}$, we see that $b_{2}$ must have blue edges to $\{a_{3}, a_{4}\}$. This forms a blue copy of $F_{2}$ centered at $b_{2}$, for a contradiction. This means that $b_{1}$, and similarly $b_{2}$, must have exactly one red edge to $\{a_{3}, a_{4}\}$ and to avoid a red copy of $F_{2}$, these edges must go to the same vertex, say $a_{3}$. Both $b_{1}$ and $b_{2}$ must then have blue edges to $a_{4}$, forming a blue copy of $F_{2}$ centered at $a_{4}$, a contradiction completing the proof of Claim~\ref{Clm:F2K8}
\end{proof}

We consider cases based on the value of $r$. Since $n > 9$, we certainly have $r \geq 1$.

\setcounter{case}{0}
\begin{case}
$r = 5$.
\end{case}

In this case, we must also have $t = 5$ since otherwise there must be a monochromatic triangle in the reduced graph which contains at least two vertices corresponding to parts of order at least $2$, making a monochromatic copy of $F_{2}$ in $G$. Then the reduced graph must be the unique $2$-coloring of $K_{5}$ with no monochromatic triangle. By Claim~\ref{Clm:F2}, there is at most one red edge and at most one blue edge within each part $H_{i}$. If there is a red (or blue) edge in a part $H_{i}$, then to avoid a red (respectively blue) copy of $F_{2}$, there can be no red (respectively blue) edge in any other part. Thus, at most one part can have one edge in each of red and blue and all other parts have at most one such edge. By Claim~\ref{Clm:F2K8}, we have
$$
|G| = \sum_{i = 1}^{t} |H_{i}| \leq 9 + 4\cdot 8 = 41 < n,
$$
a contradiction.

\begin{case}
$r = 4$.
\end{case}

To avoid a monochromatic copy of $F_{2}$ among the large parts, each of these parts must have red edges to some other large part and blue edges to some other large part. By Claim~\ref{Clm:F2}, this means that each part has at most one red and at most one blue edge. By Claim~\ref{Clm:F2K8}, this means that each part has order at most $9$ so with at most $4$ other parts (of order $1$ each), we have
$$
|G| = \sum_{i = 1}^{t} |H_{i}| \leq 4 \cdot 9 + 4 = 40 < n,
$$
a contradiction.

\begin{case}
$r = 3$.
\end{case}

To avoid a monochromatic copy of $F_{2}$, the subgraph of the reduced graph induced on the vertices corresponding to the three large parts must not be a monochromatic triangle. Without loss of generality, suppose all edges from $H_{2}$ to $H_{3}$ are blue while all edges from $H_{1}$ to $H_{2} \cup H_{3}$ are red. By Claim~\ref{Clm:F2}, each of $H_{2}$ and $H_{3}$ contains at most one red and at most one blue edge so by Claim~\ref{Clm:F2K8}, we have $|H_{2}|, |H_{3}| \leq 9$.

By the minimality of $t$, there is at least one part of order $1$ with blue edges to $H_{1}$, so by Claim~\ref{Clm:F2}, $H_{1}$ contains at most one red edge and no blue copy of $2K_{2}$. By removing at most one vertex from $H_{1}$, we can obtain a subgraph with at most one blue edge, meaning that $|H_{1}| \leq 10$. With at most $5$ other parts (of order $1$ each), this means that
$$
|G| = \sum_{i = 1}^{t} |H_{i}| \leq 2 \cdot 9 + 10 + 5 = 33 < n,
$$
a contradiction.

\begin{case}
$r = 2$.
\end{case}

Suppose the edges between $H_{1}$ and $H_{2}$ are red, so by Claim~\ref{Clm:F2}, each of $H_{1}$ and $H_{2}$ contains at most one red edge. By the minimality of $t$, there exists at least one part of order $1$ with blue edges to $H_{1}$ (and similarly to $H_{2}$). By removing at most one vertex from $H_{1}$, we can obtain a subgraph with at most one blue edge, meaning that $|H_{1}| \leq 10$ and similarly $|H_{2}| \leq 10$. With at most $6$ other parts (of order $1$ each), this means that
$$
|G| = \sum_{i = 1}^{t} |H_{i}| \leq 2\cdot 10 + 6 = 26 < n,
$$
a contradiction.

\begin{case}
$r = 1$.
\end{case}

By the minimality of $t$, there is at least one part of order $1$ with red edges to $H_{1}$ and at least one part of order $1$ with blue edges to $H_{1}$. By Claim~\ref{Clm:F2}, there is no red or blue copy of $2K_{2}$ within $H_{1}$ so by removing at most $2$ vertices from $H_{1}$, we can obtain a subgraph with at most one red and at most one blue edge. By Claim~\ref{Clm:F2K8}, this means that $|H_{1}| \leq 9 + 2 = 11$. With at most $7$ other parts (of order $1$ each), this means that
$$
|G| = \sum_{i = 1}^{t} |H_{i}| \leq 11 + 7 = 18 < n,
$$
a contradiction.

This completes the proof of the situation where $k = 4$. We may therefore assume that $k \geq 6$ for the remainder of the proof. 

\medskip

First suppose $t \leq 3$, so we may assume $t = 2$ by the minimality
of $t$, say with parts $H_{1}$ and $H_{2}$ with all red edges in
between them. Assume, for a moment, that $|H_{i}| \geq 2$ for each
$i \in \{1, 2\}$. Then within $H_{1}$ and $H_{2}$, there can be a
total of at most one red edge to avoid creating a red copy of
$F_{2}$. Then by removing a single vertex from $G$, this red edge
can be destroyed, leaving behind two parts each with no red edges.
This means that
$$
|G| = |H_{1}| + |H_{2}| \leq 2 (n_{k - 1} - 1) + 1 = 2\left( 4 \cdot 5^{\frac{k - 2}{2}} \right) < n_{k},
$$
a contradiction.
On the other hand, if $|H_{1}| = 1$, then $H_{2}$ contains no red
copy of $2K_{2}$ so by deleting at most two vertices from $H_{2}$,
we can destroy all red edges from within $H_{2}$. This means that
$$
|G| = |H_{1}| + |H_{2}| \leq 1 + (n_{k - 1} - 1) + 2 = 4 \cdot 5^{\frac{k - 2}{2}} + 3 < n_{k},
$$
again a contradiction. This means we may assume $4 \leq t \leq 8$.

Let $r$ be the number of parts of the Gallai partition with order at
least $2$ and so $|H_{r}| \geq 2$ while $|H_{r + 1}| = 1$. Certainly
any monochromatic triangle among the parts of order at least $2$
would create a monochromatic copy of $F_{2}$ so this means that $r
\leq 5$. At the other extreme, if $r = 0$, then $G$ is simply a
$2$-coloring so this is the case $k = 2$. We consider cases based on
the value of $r$.

\setcounter{case}{0}
\begin{case}
$r = 5$.
\end{case}

In this case, we must also have $t = 5$ since otherwise there must be a monochromatic triangle in the reduced graph which contains at least two vertices corresponding to parts of order at least $2$, making a monochromatic copy of $F_{2}$ in $G$. Then the subgraph of the reduced graph induced on the parts of order at least $2$ must be the unique $2$-coloring of $K_{5}$ containing no monochromatic triangle.
Inside the parts $H_{1}, H_{2}, \dots, H_{5}$, by Claim~\ref{Clm:F2}, there is a total of at most one red edge and at most one blue edge. Then by removing a total of at most two vertices, all red and blue edges can be removed from within the parts. By induction on $k$, this means that
\beqs
|G| & = & \sum_{i = 1}^{5} |H_{i}| \\
~ & \leq & 5(n_{k - 2} - 1) + 2 \\
~ & = & 5\left( \frac{83}{2} \cdot 5^{\frac{k - 6}{2}} - \frac{1}{2} \right) + 2 \\
~ & = & \frac{83}{2} \cdot 5^{\frac{k - 4}{2}} - \frac{1}{2}\\
~ & < & n,
\eeqs
a contradiction.

\begin{case}
$r = 4$.
\end{case}

Within the reduced graph restricted to the parts of order at least
$2$, each vertex must have at least one incident red edge and at
least one incident blue edge. By claim~\ref{Clm:F2}, inside the
parts $H_{1}, H_{2}, H_{3}, H_{4}$, there can be a total of at most
two red edges and at most two blue edges and the two cannot be
adjacent. This means that by removing at most $4$ vertices, all red
and blue edges can be removed from within $H_{i}$ for $i \leq 4$.
With $t \leq 8$, this means \beqs
|G| & = & \sum_{i = 1}^{t} |H_{i}| \\
~ & \leq & 4(n_{k - 2} - 1) + 4 + 4 \\
~ & = & 4\left( \frac{83}{2} \cdot 5^{\frac{k - 6}{2}} - \frac{1}{2} \right) + 8 \\
~ & < & n,
\eeqs
a contradiction.


\begin{case}
$r = 3$.
\end{case}

Certainly all edges between the three parts of order at least $2$
cannot have the same color to avoid a monochromatic copy of $F_{2}$.
Without loss of generality (since we may safely disregard the
relative orders among these large sets), suppose all edges from
$H_{1}$ to $H_{2} \cup H_{3}$ are red while all edges between
$H_{2}$ and $H_{3}$ are blue. Then by Claim~\ref{Clm:F2}, there is
at most one red edge in either $H_{1}$ or in $H_{2} \cup H_{3}$ so
the removal of at most one vertex can destroy all red edges from
within the parts. Similarly, there is also at most one blue edge
within either $H_{2}$ or $H_{3}$ so the removal of at most one
vertex can destroy all blue edges from within the parts $H_{2}$ and
$H_{3}$.

By the minimality of $t$, there is a part (clearly of order $1$ by
the assumed structure) with blue edges to $H_{1}$. By
Claim~\ref{Clm:F2}, this means that $H_{1}$ contains no blue copy of
$2K_{2}$, so the removal of at most two vertices can destroy all
blue edges from within $H_{1}$.

Together, the removal of at most $4$ vertices can destroy all red
and blue edges from within the parts. Since $t \leq 8$, there are at
most $5$ vertices in $G \setminus (H_{1} \cup H_{2} \cup H_{3})$,
meaning that \beqs
|G| & = & \sum_{i = 1}^{t} |H_{i}|\\
~ & \leq & [3(n_{k - 2} - 1) + 4] + 5\\
~ & = & 3 \left(\frac{83}{2} \cdot 5^{\frac{k - 6}{2}} - \frac{1}{2} \right) + 9\\
~ & < & n,
\eeqs
a contradiction.


\begin{case}
$r = 2$.
\end{case}

Suppose red is the color of all edges between $H_{1}$ and $H_{2}$.
By Claim~\ref{Clm:F2}, there can be at most one red edge within
either $H_{1}$ or $H_{2}$ so the removal of a single vertex can
destroy all red edges from within these parts. By the minimality of
$t$, there is at least one part (single vertex) with all blue edges
to $H_{1}$ and similarly at least one part with all blue edges to
$H_{2}$. By Claim~\ref{Clm:F2}, there can be no blue copy of
$2K_{2}$ within $H_{i}$ for any $i \leq 2$ so that means the removal
of at most $2$ vertices from within each part $H_{i}$ can destroy
all blue edges within these parts. Finally, since $t \leq 8$, there
can be at most $6$ additional vertices in $G \setminus (H_{1} \cup
H_{2})$ so this means that
$$
|G| \leq |H_{1}| + |H_{2}| + 6 \leq [2(n_{k - 2} - 1) + 5] + 6 < n,
$$
a contradiction.

\begin{case}
$r = 1$.
\end{case}

Much like the arguments in the previous case, by the minimality of
$t$, there is at least one part (single vertex) with all red edges
to $H_{1}$ and at least one part with all blue edges to $H_{1}$. By
Claim~\ref{Clm:F2}, this means that the removal of a total of at
most $4$ vertices can destroy all red and blue edges from within
$H_{1}$. This yields
$$
|G| \leq |H_{1}| + 7 \leq [(n_{k - 2} - 1) + 4] + 7 < n,
$$
a contradiction, completing the proof of Lemma~\ref{Lem:F2EvenUp}.
\end{proof}

\section{The case $n=3$}

We first prove a lower bound in the following lemma.

\begin{lemma}\labelz{Lem:F3Low}
$$
gr_k(K_3,F_3)\geq \begin{cases}
14\times 5^{\frac{k-2}{2}}-1, &\mbox {\rm if}~k~is~even;\\[0.2cm]
33\times 5^{\frac{k-3}{2}}, &\mbox {\rm if}~k~is~odd.
\end{cases}
$$
\end{lemma}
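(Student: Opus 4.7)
My plan is to exhibit, for each $k\geq 2$, an explicit Gallai $k$-colouring of the complete graph on $n_k-1$ vertices (where $n_k$ denotes the claimed lower bound) containing no monochromatic copy of $F_3$. The construction will be recursive, incrementing the number of colours by $2$ at each step via a pentagon-style blow-up, with the two base cases $k=2$ and $k=3$.

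For $k=2$ we want a colouring of $K_{12}$. By Proposition~\ref{Prop:Fn}(2) we have $R(F_3,F_3)=13$, so such a colouring exists; concretely I would take the $2$-colouring that partitions the $12$ vertices into two classes of size $6$, red inside each class and blue between the classes. The red graph is $2K_6$, too small to contain $F_3$; the blue graph is $K_{6,6}$, and at any potential centre $v$ the six monochromatic neighbours lie on the opposite side, forming an independent set in blue, so no three disjoint blue edges can be found. For $k=3$ we want a colouring of $K_{32}$; I would build this by extending the $K_{12}$ colouring above with an additional structure glued in via the third colour, verifying by inspection that no monochromatic $F_3$ or rainbow triangle is introduced.

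For the inductive step $k\geq 4$, given a Gallai $(k-2)$-colouring $G_{k-2}$ of $K_{n_{k-2}}$ without monochromatic $F_3$, I would take five disjoint copies $B_1,\dots,B_5$ of $G_{k-2}$ and join them by the two new colours $k-1$ and $k$ using the unique $2$-colouring of $K_5$ with no monochromatic triangle (the pentagon/complement pattern). Then I would adjoin an augmentation $X$ of $8$ vertices (even $k$) or $4$ vertices (odd $k\geq 5$), organised into one or more additional parts, and attach $X$ to the $B_i$'s in a controlled way. A direct count gives $|G_k|=5\,n_{k-2}+8$ (respectively $5\,n_{k-2}+4$), which matches the recurrences $n_k=14\cdot 5^{(k-2)/2}-2$ and $n_k=33\cdot 5^{(k-3)/2}-1$. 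Verification is colour by colour: an old colour $c\in[k-2]$ appears only within individual parts, so any monochromatic $F_3$ in colour $c$ would lie inside a single part and is excluded inside each $B_i$ by induction and inside $X$ by the explicit structure; for a new colour $c\in\{k-1,k\}$ the pentagon pattern ensures that any vertex $v\in B_i$ has its $c$-neighbourhood equal to the union of exactly two other parts $B_j,B_{j'}$ whose mutual edges are in the other new colour, so no three disjoint $c$-edges exist among the neighbours and no $F_3$ in colour $c$ can be centred at $v$.

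The main technical obstacle will be the design of $X$ and its attachment to the pentagon. The naive attempt of attaching a single extra part by the two new colours fails: the five pentagon parts are too large, so any monochromatic triangle in the enlarged reduced graph involving two main parts and the extra part immediately produces $F_3$ centred in the extra part; an easy counting argument on the pentagon shows that no assignment of new colours to the $X$-to-$B_i$ edges avoids this, so a subtler attachment (possibly involving old colours in the between-parts colouring, multiple small aug-parts, or replacing one of the $B_i$'s by a modified structure in the style of the $A_j$ construction used in Lemma~\ref{Lem:F2Low}) must be devised. This ad hoc combinatorial step is what accounts for the extra ``$+8$'' and ``$+4$'' vertices in the recurrence and is where the real case analysis lives; everything else reduces to bookkeeping on the reduced graph.
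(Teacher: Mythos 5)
You have correctly identified the shell of the argument: the recurrence $n_k = 5n_{k-2}+8$ (even $k$) and $n_k = 5n_{k-2}+4$ (odd $k$), realized by a pentagon blow-up of five copies of the previous construction, with the verification reducing to the observation that neither new colour contains a monochromatic triangle in the reduced graph. But the entire content of the lemma lives in the step you explicitly defer: how to place the extra $8$ (resp.\ $4$) vertices without creating a monochromatic $F_3$. As you yourself observe, attaching them as additional parts of the top-level partition fails, and you offer only a list of possibilities for what "must be devised." A construction that is not devised is not a proof, so this is a genuine gap rather than a stylistic difference.

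For comparison, the paper's resolution is the third option on your list, made precise: the extra vertices are absorbed \emph{inside} four (even case) resp.\ two (odd case) of the five pentagon copies, by replacing an internal monochromatic $K_6$ (in colour $1$) deep within such a copy by a $K_8$ consisting of a $C_5$ in colour $2$, a triangle in one of the two \emph{new} colours $j$, and all remaining edges in colour $1$; this adds exactly $2$ vertices per modified copy. Two further subtleties then arise that your verification sketch does not cover: (i) the choice of \emph{which} internal $K_6$ to replace matters, since replacing one whose outgoing edges already carry the colour of the new $C_5$/triangle creates an $F_3$ (the paper's Figure~\ref{Fig:WrongG4i}); and (ii) the new colours now appear both between the pentagon parts and inside some parts, so the pentagon edges must be assigned so that the two copies carrying an internal triangle of colour $j$ are joined to each other by the \emph{other} new colour. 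Your colour-by-colour check, which assumes old colours live only inside parts and new colours only between them, breaks on exactly these two points. The base case $k=3$ ($K_{32}$) has the same issue: it is not obtained by "extending" the $K_{12}$ colouring, but is itself a pentagon blow-up of four monochromatic $K_6$'s and one specially coloured $K_8$, which you would also need to exhibit.
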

\begin{proof}
We prove this result by inductively constructing a $k$-colored copy
of $K_n$ where
$$
n=\begin{cases}
14\times 5^{\frac{k-2}{2}}-2, &\mbox {\rm if}~k~is~even;\\[0.2cm]
33\times 5^{\frac{k-3}{2}}-1, &\mbox {\rm if}~k~is~odd,
\end{cases}
$$
which contains no rainbow triangle and no monochromatic copy of
$F_3$. For the base of this induction, let $G_1$ be a $1$-colored
copy of $K_{6}$, which clearly contains no monochromatic copy of
$F_3$, say using color $1$. Let $G_2$ be a $2$-colored copy of
$K_{12}$, constructed by making $2$ copies of $G_1$ and inserting
all edges of colors $2$ in between the two copies. Let $G_{3}^{3}$
be a $3$-colored copy of $K_{8}$ consisting of a monochromatic copy
of $C_{5}$ in color $2$, a monochromatic triangle in color $3$, and
all remaining edges in color $1$. Then let $G_{3}$ be a $3$-colored
copy of $K_{32}$ constructed by taking the union of $G_{3}^{3}$ with
$4$ disjoint copies of $G_{1}$ and inserting all edges of colors $2$
and $3$ in between these copies to form a blow-up of the unique
$2$-colored $K_{5}$ containing no monochromatic triangle. Despite
the extra edges of colors $2$ and $3$ within the copy of
$G_{3}^{3}$, it can be easily verified that $G_{3}$ contains no
monochromatic copy of $F_{3}$ and certainly no rainbow triangle. See
Figure~\ref{Fig:G3} for a diagram of $G_{3}$ where solid edges are
color $2$, dashed edges are color $3$, and all edges not pictured
are color $1$.

\begin{figure}[H]
\begin{center}
\includegraphics{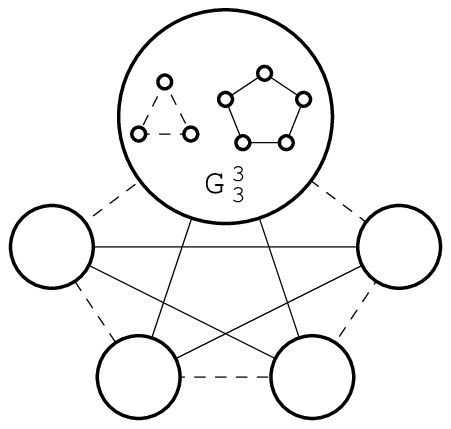}
\caption{The coloring $G_{3}$\labelz{Fig:G3}}
\end{center}
\end{figure}

Let $G_4^1$ be a $3$-colored copy of $K_{14}$ constructed by taking a copy of $G_1$ and a copy of $G_{3}^{3}$ and inserting all edges of color $2$ in between the two graphs. Let $G_{3}^{4}$ be a $3$-colored copy of $K_8$ constructed by replacing the edges of color $3$ in $G_{3}^{3}$ with edges of color $4$. Let $G_4^2$ be a $3$-colored copy of $K_{14}$ constructed by taking a copy of $G_1$ and copy of $G_{3}^{4}$ and inserting all edges of colors $2$ in between the two graphs. Finally let $G_4$ be a $4$-colored copy of $K_{68}$ constructed by making $2$ copies of $G_4^1$, $2$ copies of $G_4^2$, and one copy of $G_2$ and inserting edges of colors $3$ and $4$ between the five graphs to form the unique $2$-colored $K_5$ with no monochromatic triangle. It can be easily verified that $G_{4}$ is a $4$-coloring of $K_{68}$ containing no monochromatic copy of $F_{3}$ and no rainbow triangle. 

For $i \in \{4, 5\}$, let $G_{4}^{i}$ be a colored copy of $K_{34}$ constructed by taking a copy of $G_{3}$ and replacing one of its copies of $G_{1}$, one with edges of color $2$ to the copy of $G_{3}^{3}$, with a copy of $K_{8}$ containing a monochromatic copy of $C_{5}$ in color $i$, a monochromatic triangle in color $3$, and all remaining edges in color $1$. Note that if a copy of $G_{1}$ with edges of color $3$ to the copy of $G^{3}_{3}$ was replaced, then a monochromatic copy of $F_{3}$ would be created in the process (see Figure~\ref{Fig:WrongG4i} where all edges pictured have color $3$, the thicker ones producing the copy of $F_{3}$). The graph $G_5$, a copy of $K_{164}$ using colors $\{1, 2, 3, 4, 5\}$, is then constructed by taking one copy of $G_{4}^{4}$, one copy of $G_{4}^{5}$, and $3$ copies of $G_{3}$ and inserting edges of colors $4$ and $5$ between these $5$ graphs to form a blow-up of the unique $2$-colored $K_{5}$ with no monochromatic triangle. It is not difficult to verify that $G_{5}$ contains no monochromatic copy of $F_{3}$ and no rainbow triangle. 

\begin{figure}[H]
\begin{center}
\includegraphics{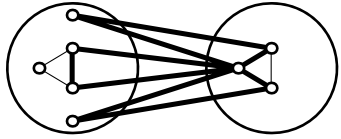}
\caption{A copy of $F_{3}$ appearing in color $3$
\labelz{Fig:WrongG4i}}
\end{center}
\end{figure}

In fact, the argument above using Figure~\ref{Fig:WrongG4i} yields
the following easy fact.

\begin{fact}\labelz{Fact:Deg2}
If $X$ and $Y$ are two parts of a Gallai partition with all red
edges in between, each with at least $3$ vertices, then if there is
a vertex in $X$ with at least two incident red edges inside $X$,
then $Y$ contains no red edges.
Similarly, $X$ cannot contain a vertex with $3$ incident red edges.
\end{fact}

Another similar argument yields yet another easy fact to be used
later.

\begin{fact}\labelz{Fact:2disjoint}
If $X$ and $Y$ are two parts of a Gallai partition with all red
edges in between and there are two disjoint red edges inside
$X$, then $Y$ contains no red edges.
\end{fact}


First suppose $k$ is even and suppose we have constructed a coloring $G_{2i-2}$ of a complete graph where $i$ is a positive integer and $2 \leq 2i-2<k$, using the $2i-2$ colors $\{1, 2, \dots, 2i-2\}$ and having order $n_{2i-2}=14\times 5^{i-2}-2$ such that $G_{2i-2}$ contains no rainbow triangle and no monochromatic copy of $F_3$. For $j \in \{2i - 1, 2i\}$, let $G_{2i - 1}^{j}$ be a graph constructed from $G_{2i-2}$ by changing one copy of $G_1$ used in the construction of $G_{2}$ (as part of the construction of $G_{2i-2}$) into a colored copy of $K_8$ containing a monochromatic $C_5$ in color $2$ and a triangle in color $j$ and all other edges in color $1$. 
We then construct $G_{2i}$ by taking $2$ copies of $G_{2i - 1}^{2i -
1}$, $2$ copies of $G_{2i - 1}^{2i}$, and a copy of $G_{2i-2}$, and
inserting all edges of colors $2i-1$ and $2i$ between the five
graphs to form a blow-up of the unique $2$-colored $K_{5}$ with no
monochromatic triangle in such a way that the copies of $G_{2i -
1}^{2i - 1}$ are connected by edges of color $2i$ and the copies of
$G_{2i - 1}^{2i}$ are connected by edges of color $2i - 1$. Then
$G_{2i}$ is a colored complete graph of order
$$
n_{2i} = 2\left(14 \times 5^{i - 2} \right) + 2\left(14 \times 5^{i
- 2} \right) + \left(14 \times 5^{i - 2} - 2\right) = 14 \times 5^{i
- 1} - 2
$$
containing no rainbow triangle and no monochromatic copy of $F_{3}$,
as desired.

Finally suppose $k$ is odd and again suppose we have constructed a
coloring $G_{2i - 1}$ of a complete graph where $i$ is a positive
integer and $3 \leq 2i-1<k$, using the $2i-1$ colors
$\{1,2,\ldots,2i-1\}$ and having order $n_{2i-1}=33\times 5^{i-2}-1$
such that $G_{2i-1}$ contains no rainbow triangle and no
monochromatic copy of $F_3$. For $j \in \{2i, 2i + 1\}$, let
$G_{2i}^{j}$ be a graph constructed from $G_{2i-1}$ by changing one
copy of $G_1$ used in the construction of $G_{2i-1}$ into a colored
copy of $K_8$ containing a monochromatic $C_5$ in color $2$ and a
triangle in color $j$ and all other edges in color $1$.
We then construct $G_{2i+1}$ by taking a copy of $G_{2i}^{2i + 1}$,
a copy of $G_{2i}^{2i}$, and $3$ copies of $G_{2i-1}$, and inserting
all edges of colors $2i$ and $2i+1$ between the five graphs to form
a blow-up of the unique $2$-colored $K_5$ with no monochromatic
triangle. Then $G_{2i + 1}$ is a colored complete graph of order
$$
n_{2i + 1} = \left(33 \times 5^{i - 2} + 1\right) + \left(33 \times
5^{i - 2} + 1\right) + 3\left(33 \times 5^{i - 2} - 1\right) =
33\times 5^{i - 1} - 1
$$
containing no rainbow triangle and no monochromatic copy of $F_{3}$,
as desired.
\end{proof}

\begin{lemma}\labelz{Lem:F3}
For $k\geq 3$,
$$
gr_k(K_3;F_3)\leq \begin{cases}
14\times 5^{\frac{k-2}{2}}-1, &\mbox {\rm if}~k~is~even;\\[0.2cm]
33\times 5^{\frac{k-3}{2}}, &\mbox {\rm if}~k=3,5;\\[0.2cm]
33\times 5^{\frac{k-3}{2}}+\frac{3}{4}\times
5^{\frac{k-5}{2}}-\frac{3}{4}, &\mbox {\rm if}~k~is~odd,~k\geq 7.
\end{cases}
$$
\end{lemma}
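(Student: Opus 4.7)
The plan is to prove the upper bound by induction on $k$, mirroring the structure of Lemma~\ref{Lem:F2EvenUp} but adapted to $F_3$. Base cases $k = 3, 4, 5$ would be handled directly via detailed Gallai partition analysis, starting from $R(F_3, F_3) = 13$. These bases are genuinely necessary because the recurrence does not close cleanly at the bottom: with the target bounds, $5 \cdot n_3 = 165 = n_5$ (rather than $5 n_3 + 3$), so the $k = 5$ bound must be established separately in order to initiate the induction at the right value from $k = 7$ onward.

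For the inductive step with $k \geq 6$, let $G$ be a Gallai $k$-coloring of $K_n$ where $n$ equals the claimed upper bound, and suppose $G$ contains no monochromatic copy of $F_3$. Take a Gallai partition with parts $H_1, \ldots, H_t$ ordered $|H_1| \geq \cdots \geq |H_t|$, with $t$ minimum and red and blue used between the parts. Since $R(F_3, F_3) = 13$, we have $t \leq 12$. The central structural observation, an analog of Claim~\ref{Clm:F2}, is that if an outside vertex $v$ is joined by all red edges to a part $A$, then $A$ contains no red $3K_2$ (else $v$ centers a red $F_3$); this, combined with Facts~\ref{Fact:Deg2} and~\ref{Fact:2disjoint}, lets one show that after deleting a bounded number of vertices from each part, all red and blue edges inside vanish, and induction with $k - 2$ colors applies inside each remaining part.

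I would then split into cases on $r$, the number of large parts ($|H_i| \geq 2$). The case $t = 2$ (forced within $t \leq 3$ by minimality of $t$) is handled by applying induction with $k - 1$ colors after removing a bounded number of vertices to destroy red inside the two parts. For $4 \leq t \leq 12$, any monochromatic triangle in the reduced graph among parts of order $\geq 3$ would immediately yield a monochromatic $F_3$, forcing $r \leq 5$. Each of $r \in \{1, 2, 3, 4, 5\}$ then yields a bound of the form $r \cdot n_{k-2} + c$. The hardest subcase will be $r = t = 5$, where the large parts form a blow-up of the unique $2$-coloring of $K_5$ with no monochromatic triangle: here we need the total vertex-deletion count used to eliminate inside-part red and blue edges to be at most $4$ for even $k$ or $3$ for odd $k \geq 7$.

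The main obstacle is precisely this tight bookkeeping in the $r = 5$ case. Facts~\ref{Fact:Deg2} and~\ref{Fact:2disjoint} naively give one deletion per part per color, totaling $10$ across five parts, which vastly exceeds the allowed slack. The resolution must be to show that in nearly every part the same vertex deletion kills both red and blue (or that the structural constraints prevent simultaneous red and blue edges inside a part altogether), squeezing the total within the budget. The slightly looser odd-$k$ bound $33\cdot 5^{(k-3)/2} + \frac{3}{4}\cdot 5^{(k-5)/2} - \frac{3}{4}$ for $k \geq 7$ presumably reflects exactly the place where this combinatorial accounting cannot quite close, and why the authors only conjecture sharpness in that regime.
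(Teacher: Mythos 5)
Your outline follows the same architecture as the paper's proof (induction on $k$, Gallai partition with $t$ minimal, case analysis on the number $r$ of large parts, with $r=t=5$ as the bottleneck), and your diagnosis that $5\cdot n_3 = 165 = n_5$ forces a separate treatment of $k=5$, and that the odd-$k$ slack term $\frac34\cdot 5^{(k-5)/2}-\frac34$ is exactly the residue of the $r=5$ accounting, is correct and matches the paper. However, there are genuine gaps in how you propose to execute it. First, for $F_3$ the ``large'' threshold must be order at least $3$, not $2$: a monochromatic triangle in the reduced graph on three parts of order $2$ yields only an $F_2$, so with your definition of $r$ the deduction $r\le 5$ fails, and the whole case split collapses (the paper's $k=3$ analysis needs a separate claim just to control the number of order-$2$ parts). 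Second, your budget arithmetic is off (the recurrence $5(n_{k-2}-1)+c<n_k$ tolerates $c\le 8$ for even $k$ and would need $c\le 4$ for the clean odd bound), and, more importantly, the mechanism you propose for meeting the budget in the $r=5$ case --- that one deletion kills both colors in a part --- is not what works. The paper instead proves (Claims~\ref{Claim:Only2K2}, \ref{Claim:4K2}, \ref{Claim:Kill8}) that for each single color the within-part edges across all five parts jointly contain no $4K_2$, and that a $2K_2$ of one color inside one part forbids that color inside every other part; this yields at most $4$ deletions per color \emph{globally}, hence $c=8$, which closes the even case and produces exactly the $a>\frac34$ slack in the odd case.

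Finally, the parts of the proof you defer to ``detailed Gallai partition analysis'' are in fact the bulk of the work: the $k=3$ base requires several bespoke structural lemmas (e.g., that the red subgraph inside a part joined in red to a part of order $\ge 3$ is a subgraph of $C_4$, $C_5$, or $2K_3$, and that a $3$-colored $K_9$ with two colors so restricted contains a monochromatic $F_3$), and the $k=5$ case needs an auxiliary statement about $4$-colored copies of $K_{33}$ in which one color is confined to a triangle (Claim~\ref{Claim:8}), followed by a second-level Gallai decomposition of the largest part. None of this is routine bookkeeping, so as written the proposal identifies the right skeleton but leaves the decisive combinatorial content unproved.
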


The proof of this lemma is similar (albeit more tiresome) to the proof of the upper bound presented in Lemma~\ref{Lem:F2EvenUp}. We therefore omit the proof and provide it in an appendix for the interested reader.


\section{For General $F_{n}$}

First an easy lemma.

\begin{lemma}\labelz{Lemma:MonoSmallParts}
If $G$ is a Gallai colored complete graph of order at least $4n-3$ in which all parts of a Gallai partition have order at most $n - 1$ and all edges in between the parts of $G$ have one color, say red, then $G$ contains a red copy of $F_{n}$.
\end{lemma}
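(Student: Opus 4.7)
The plan is to build the red copy of $F_n$ greedily: pick one vertex $v$ to serve as the apex, then find $n$ pairwise disjoint red edges in the red neighborhood of $v$.

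First I would pick any part of the Gallai partition, say $H_1$, and any vertex $v \in H_1$. Since every edge of $G$ between parts is red, the red neighborhood of $v$ contains all of $V(G) \setminus H_1$. Because $|H_1| \leq n-1$, this set has size at least $(4n-3) - (n-1) = 3n-2$.

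Next I would find $n$ pairwise disjoint red edges inside $V(G) \setminus H_1$. The graph induced by the between-part edges on $V(G)\setminus H_1$ is a complete multipartite graph (the parts are the $H_i$ with $i \geq 2$), and every edge of it is red. Its matching number equals $\min\bigl(\lfloor N/2\rfloor,\, N - a\bigr)$, where $N = |V(G)\setminus H_1|$ and $a$ is the size of the largest part among $H_2,\dots,H_t$. Since $N \geq 3n-2$ and $a \leq n-1$, we have $N - a \geq 2n-1 \geq n$ and $\lfloor N/2 \rfloor \geq \lfloor (3n-2)/2 \rfloor \geq n$ (for $n \geq 2$), so a red matching of size $n$ exists. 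Joining $v$ to this matching produces the desired red copy of $F_n$.

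The only step requiring any care is the matching bound for complete multipartite graphs, and this reduces to checking that neither obstruction (overall parity or a single dominating part) can occur under the hypotheses; both are immediate from $a \leq n-1$ and $N \geq 3n-2$. I would also remark briefly on the small-$n$ edge cases (for $n=1$, $F_1 = K_3$ and the statement is vacuous since $4n-3 = 1$), so the argument applies for all $n \geq 2$ where the conclusion is nontrivial.
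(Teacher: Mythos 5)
Your proof is correct and follows essentially the same strategy as the paper: fix an apex vertex in one part and extract a red matching of size $n$ from the remaining parts, using that each part has order at most $n-1$. The only cosmetic difference is how the matching is certified --- the paper greedily groups the remaining parts into two blocks each of size at least $n$ and matches across them, while you invoke the matching number $\min\bigl(\lfloor N/2\rfloor,\, N-a\bigr)$ of a complete multipartite graph; both verifications are immediate under the hypotheses.
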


\begin{proof}
Let $H_{1}, H_{2}, \dots, H_{t}$ be the parts of the assumed partition, so since $|G| \geq 4n - 3$, we see that $t \geq 5$. Since $|H_{i}| \leq n - 1$, there exists an integer $r$ and corresponding set of parts $H_{2}, H_{3}, \dots, H_{r}$ such that $n \leq |H_{2} \cup H_{3} \cup \dots \cup H_{r}| \leq 2n - 2$. This, in turn, implies that $|H_{r + 1} \cup H_{r + 2} \cup \dots \cup H_{t}| \geq n$. Then a single vertex from $H_{1}$ along with $n$ red edges from $H_{2} \cup H_{3} \cup \dots \cup H_{r}$ to $H_{r + 1} \cup H_{r + 2} \cup \dots \cup H_{t}$ produces a red copy of $F_{n}$.
\end{proof}

Theorem~\ref{Thm:Fn} is proven by the following two lemmas, one for the upper bound and one for the lower bound.

\begin{lemma}\labelz{Lem:Fn-Up}
For $k\geq 2$,
$$
gr_k(K_3:F_n)\leq \begin{cases}
10n\times 5^{\frac{k-2}{2}}-\frac{5}{2}n+1, &\mbox {\rm if}~k~is~even;\\[0.2cm]
\frac{9}{2}n\times 5^{\frac{k-1}{2}}-\frac{5}{2}n+1, &\mbox {\rm if}~k~is~odd.
\end{cases}
$$
\end{lemma}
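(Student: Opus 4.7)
The plan is to prove Lemma~\ref{Lem:Fn-Up} by induction on $k$, adapting the case analysis of Lemma~\ref{Lem:F2EvenUp} to $F_n$ by replacing ``matching number at most $1$'' observations with ``matching number at most $n-1$'' and invoking the corresponding vertex-cover bounds. The base case $k=2$ is immediate from Proposition~\ref{Prop:Fn}(3), since $R(F_n,F_n)\le 6n\le \tfrac{15n}{2}+1$. For the inductive step, let $G$ be a Gallai coloring of $K_N$ with $N$ equal to the claimed bound and suppose $G$ contains no monochromatic $F_n$; choose a Gallai partition with minimum number of parts $t$, say with red and blue used between parts $H_1,\dots,H_t$ sorted by decreasing size.

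The key structural fact (an $F_n$-analog of Claim~\ref{Clm:F2}) is: if $H_i$ is joined to a nonempty part by all red edges, then the red matching inside $H_i$ has size at most $n-1$, otherwise a vertex of the other part would be the center of a red $F_n$; consequently at most $2(n-1)$ vertices of $H_i$ suffice to cover all its internal red edges. A sharper form: if the red-neighbors of $H_i$ in the reduced graph are pairwise blue-connected, the combined red matching inside those neighbors is at most $n-1$. Lemma~\ref{Lemma:MonoSmallParts} provides a complementary bound: any monochromatic clique in the reduced graph consisting entirely of \emph{small} parts (those of order at most $n-1$) has vertex total at most $4n-4$.

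The case analysis on $t$ parallels Lemma~\ref{Lem:F2EvenUp}. When $t=2$ (which the minimality of $t$ forces whenever $t\le 3$), say with red between $H_1$ and $H_2$, deleting at most $2(n-1)$ vertices per part kills the between-color inside, and induction on $k$ applied in $k-1$ colors gives $|G|\le 2(gr_{k-1}(K_3:F_n)-1)+4(n-1)$, strictly below the claimed bound for $k\ge 3$. When $t\ge 4$, by Proposition~\ref{Prop:Fn} we have $t\le 6n-1$; call a part \emph{large} if $|H_i|\ge n$ and let $r$ be the number of large parts. Any monochromatic triangle of three large parts in the reduced graph contains a monochromatic $F_n$ (use a matching of size $n$ between two large parts, with a vertex of the third as center), so $r\le 5$. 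The extremal subcase is $r=t=5$ with the reduced graph equal to the unique 2-coloring of $K_5$ containing no monochromatic triangle; applying the sharper claim at each $H_i$ gives $\sum_{i=1}^5 m_i^R,\,\sum_{i=1}^5 m_i^B\le \tfrac{5(n-1)}{2}$, and deleting at most $2m_i^R+2m_i^B$ vertices from each $H_i$ leaves pieces using $k-2$ colors, so induction yields
\[
\sum_{i=1}^{5}|H_i|\;\le\;5\bigl(gr_{k-2}(K_3:F_n)-1\bigr)+10(n-1),
\]
which the constants $10n$ and $\tfrac{9n}{2}$ in the statement are calibrated to collapse into the claimed bound minus $11$ in both parities.

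The main obstacle lies in the residual subcases $r\le 4$: here Lemma~\ref{Lemma:MonoSmallParts} must be invoked repeatedly to keep the contribution of small parts linear in $n$ (rather than the weak $(n-1)(6n-1-r)$ estimate from $t\le 6n-1$ alone), and the minimality of $t$ must be used to ensure each large part sees both colors outward so that both its red and blue matchings are bounded by $n-1$ (and hence its size by roughly $gr_{k-2}(K_3:F_n)-1+4(n-1)$). The odd-$k$ formula follows from the same induction: the $t=2$ step reduces odd $k$ to even $k-1$, the extremal $t=5$ step reduces odd $k$ to odd $k-2$, and the constants are chosen so that both telescopes close.
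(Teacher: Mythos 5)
Your proposal follows essentially the same route as the paper's proof: induction on $k$ via a minimal Gallai partition, the $F_n$-analogue of Claim~\ref{Clm:F2} giving matching number at most $n-1$ and hence a $2(n-1)$-vertex cover of each within-part color, Lemma~\ref{Lemma:MonoSmallParts} to control the small parts, the bound $r\le 5$ on large parts, and the correctly computed extremal case $r=t=5$ with $\sum m_i^R\le \tfrac{5(n-1)}{2}$ yielding $5\bigl(gr_{k-2}(K_3:F_n)-1\bigr)+10(n-1)$, exactly as in the paper. The residual cases $r\le 4$ that you defer are where the paper spends most of its effort (e.g.\ a part seeing $H_1$ and $H_2$ in different colors when $r=2$, the $A/B/C$ trichotomy when $r=3$, and the two reduced-graph types when $r=4$), but the tools you name --- Lemma~\ref{Lemma:MonoSmallParts}, minimality of $t$, and the matching bounds --- are precisely the ones the paper uses to close them.
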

\begin{proof}
From Proposition~\ref{Prop:Fn}, we have $4n+1\leq R(F_n,F_n)\leq 6n$, and hence the result is true for $k=2$. We therefore suppose $k\geq 3$ and let $G$ be a coloring of $K_m$ where
$$
m = m(k, n) =\begin{cases}
10n\times 5^{\frac{k-2}{2}}-\frac{5}{2}n+1, &\mbox {\rm if}~k~is~even;\\[0.2cm]
\frac{9}{2}n\times 5^{\frac{k-1}{2}}-\frac{5}{2}n+1, &\mbox {\rm if}~k~is~odd.
\end{cases}
$$

Since $G$ is a $G$-coloring, it follows from Theorem
\ref{Thm:G-Part} that there is a Gallai partition of $V(G)$. Suppose
that the two colors appearing in the Gallai partition are red and
blue. Let $t$ be the number of parts in this partition and choose
such a partition where $t$ is minimized. Let $H_{1}, H_{2}, \dots,
H_{t}$ be the parts of this partition, say with $|H_{1}| \geq
|H_{2}| \geq \dots \geq |H_{t}|$. When the context is clear, we also
abuse notation and let $H_{i}$ denote the vertex of the reduced
graph corresponding to the part $H_{i}$.

If $2\leq t\leq 3$, then by the minimality of $t$, we may assume
$t=2$. Let $H_1$ and $H_2$ be the corresponding parts. Suppose all
edges from $H_1$ to $H_2$ are red. To avoid creating a red copy of
$F_{n}$, there are at most $n-1$ disjoint red edges in each $H_i$
with $i=1,2$. Delete all the vertices of these maximum red matchings
within $H_1$ and $H_2$ to create graphs $H_{1}'$ and $H_{2}'$,
leaving no red edge within either $H_{i}'$. This means that
$$
|G| = |H_{1}| + |H_{2}| \leq 2(m(k - 1, n) -1)+(2n-2)< m,
$$
a contradiction.

Let $r$ be the number of parts of the Gallai partition with order at least $n$ and call these parts ``large'' while other parts are called ``small''. Then $|H_r|\geq n$ and $|H_{r + 1}|\leq n-1$. To avoid a monochromatic copy of $F_{n}$, there can be no monochromatic triangle within the reduced graph restricted to these $r$ large parts, leading to the following immediate fact.

\begin{fact}\labelz{Fact:n1}
$r\leq 5$.
\end{fact}

The remainder of the proof is broken into cases based on the value of $r$.

\setcounter{case}{0}
\begin{case}\labelz{Case:n1}
$r=0$.
\end{case}

Let $A$ be the set of parts with blue edges to $H_1$, and $B$ be the set of parts with red edges to $H_1$. Note that by minimality of $t$, we have $A \neq \emptyset$ and $B \neq \emptyset$. To avoid a blue copy of $F_{n}$, there are at most $n - 1$ disjoint blue edges within $A$ and similarly at most $n - 1$ disjoint red edges within $B$. By removing at most $2n-2$ vertices from $A$ (and at most $2n - 2$ vertices from $B$), we remove all blue edges from $A$ (respectively all red edges from $B$). Denote the resulting subgraphs by $A'$ and $B'$. Then all the edges in between the parts of the Gallai partition of $G$ that are contained in $A'$ are red and all the edges in between the respective parts of $B'$ are blue. From Lemma~\ref{Lemma:MonoSmallParts}, we have $|A'|\leq 4n-4$ and $|B'|\leq 4n-4$. Then
$$
|G|\leq |A'|+|B'|+|H_1|+2(2n-2)\leq 8n-8+n-1+4n-4=13n-13< m,
$$
a contradiction.

\begin{case}
$r=1$.
\end{case}

Let $A$ be the set of parts with blue edges to $H_1$, and $B$ be the set of parts with red edges to $H_1$. By the same argument as in Case~\ref{Case:n1}, we may remove at most $2n - 2$ vertices from each of $A$ and $B$ to produce sets $A'$ and $B'$ with containing no blue or red edges respectively, where $|A'|\leq 4n-4$ and $|B'|\leq 4n-4$. Since $A \neq \emptyset$ and $B \neq \emptyset$ and to avoid a monochromatic copy of $F_{n}$, there are at most $n - 1$ disjoint red edges and at most $n - 1$ disjoint blue edges within $H_{1}$. By removing at most $4n - 4$ vertices from $H_{1}$, we eliminate all red and blue edges from $H_{1}$, leaving a new subgraph $H_{1}'$. This means that $|H_{1}'| \leq m(k - 2, n) - 1$ so
$$
|G| \leq [|A'| + |B'| + (4n - 4)] + [|H_{1}'| + (4n - 4)] \leq 16n - 17 + [m(k - 2, n) - 1] < m,
$$
a contradiction.

\begin{case}
$r=2$.
\end{case}

Suppose all edges from $H_1$ to $H_2$ are red. To avoid creating a monochromatic copy of $F_{n}$, there is no part outside $H_1$ and $H_2$ with red edges to all of $H_1\cup H_2$. Also since neither $H_{1}$ nor $H_{2}$ can contain more than $n - 1$ disjoint red edges or more than $n - 1$ disjoint blue edges, we have $|H_i|\leq m(k - 2, n) - 1 +(4n-4)$, for $i=1,2$. Now a claim about parts other than $H_{1}$ and $H_{2}$.

\begin{claim}\labelz{Claim:nDifferentColors}
There exists a part, say $H_3$, such that the edges between $H_1$ and $H_3$ have a different color from the edges between $H_2$ and $H_3$.
\end{claim}

\begin{proof}
Assume, to the contrary, that for each part $H_i \ (3\leq i\leq t)$, such that the edges between $H_1$ and $H_3$ and the edges between $H_2$ and $H_3$ receive same color (and therefore blue). Then we can regard $H_1\cup H_2$ as one part, and the union of other parts as another part, of a new Gallai partition with only $2$ parts, which contradicts the assumption that $t$ is minimum and $t\geq 4$.
\end{proof}

By Claim~\ref{Claim:nDifferentColors}, there exists a small part, say $H_3$, such that the edges between $H_1$ and $H_3$ receive different colors from the edges between $H_2$ and $H_3$. Let $A$ be the set of parts with blue edges to $H_3$, and $B$ be the set of parts with red edges to $H_3$. Without loss of generality, we assume that $A$ contains $H_1$ and $B$ contains $H_2$. There are at most $n - 1$ disjoint blue edges within $A$ and at most $n - 1$ disjoint red edges within $B$. Let $A'$ be the vertex set from $A$ obtained by deleting at most $2n-2$ vertices from $A \setminus H_{1}$ on this blue matching, and let $B'$ be the vertex set from $B$ by deleting at most $2n-2$ vertices from $B \setminus H_{2}$ on this red matching. All edges in between the parts within $A'$ are red and all edges in between the parts within $B'$ are blue. Then we have the following claim.

\begin{claim}\labelz{Claim:nAH1}
$|A'|-|H_1|\leq 2n-2$.
\end{claim}

\begin{proof}
Assume, to the contrary, that $|A'|-|H_1|\geq 2n - 1$. Then there are at least $3$ small parts in $A'-H_1$. Choose one of them, say $X$, and let $Y=A'-H_1-X$. Clearly $|X| \leq n - 1$, $|H_1|\geq n$, and $|Y|\geq n$. The edges between $H_1$ and $X$, the edges between $H_1$ and $Y$, the edges between $X$ and $Y$ are all red, and hence there is a blue $F_n$ centered at a vertex of $X$, a contradiction.
\end{proof}

From Claim~\ref{Claim:nAH1}, we have $|A'|-|H_1|\leq 2n-2$ and symmetrically $|B'|-|H_2|\leq 2n-2$, so
\beqs
|A|+|B| & \leq & |A'|+|B'|+(4n-4)\\
~ & \leq & |H_1|+|H_2|+8n-8\\
~ & \leq & 2m(k - 2, n) +2(4n-4)+8n-8\\
~ & \leq & 2m(k - 2, n) +16n-16.
\eeqs
Since $|H_3|\leq n-1$, it follows that
$$
|G|\leq 17n-17+2m(k - 2, n) - 1]<m,
$$
a contradiction.

\begin{case}\labelz{Case:nr5}
$r=5$.
\end{case}

In this case, $t=5$ since otherwise any monochromatic triangle in
the reduced graph restricted to $H_{1}, H_{2}, \dots, H_{6}$ would
yield a monochromatic copy of $F_{n}$. To avoid the same
construction, the reduced graph on the parts $H_1,H_2,H_3,H_4,H_5$
must be the unique $2$-coloring of $K_{5}$ with no monochromatic
triangle, say with $H_1H_2H_3H_4H_5H_1$ and $H_1H_3H_5H_2H_4H_1$
making two monochromatic cycles in red and blue respectively. In
order to avoid a red copy of $F_{n}$ with center vertex in $H_1$, it
must be the case that $H_2\cup H_5$ contains at most $n-1$ disjoint
red edges. Similarly $H_{1} \cup H_{3}$, $H_{2} \cup H_{4}$, $H_{3}
\cup H_{5}$, and $H_{4}\cup H_{1}$ each contain at most $n - 1$
disjoint red edges. Putting these together, there are at most
$\frac{5n - 5}{2}$ disjoint red edges within the parts $H_{1},
H_{2}, \dots, H_{5}$. Thus, by deleting at most $5n-5$ vertices, the
resulting graph can be devoid of red edges and symmetrically, by
deleting at most another $5n - 5$ vertices, the resulting graph can
also be devoid of blue edges. This means that
$$
|G|\leq 10n-10+5[m(k - 2, n) - 1]< m,
$$
a contradiction.

\begin{case}\labelz{Case:r4}
$r=4$.
\end{case}

To avoid monochromatic triangle in $K_4$, the four large parts must form one of two structures:
\begin{itemize}
\item Type $1$: There is a red cycle $H_1H_2H_3H_4H_1$ and a blue $2$-matching $\{H_1H_3,$ $H_2H_4\}$ in the reduced graph, or
\item Type $2$: There is a red path $H_2H_1H_4H_3$ and a blue path $H_1H_3H_2H_4$ in the reduced graph.
\end{itemize}

For Type $1$, we first have the following claim.

\begin{claim}\labelz{Claim:nNoSmall}
There is no small part outside $\{H_1,H_2,H_3,H_4\}$.
\end{claim}

\begin{proof}
Assume, to the contrary, that there exists a small part $H_5$ in $G$. This proof focuses on the reduced graph. Since $H_1H_3$ is blue, it follows that to avoid a blue triangle in the reduced graph and thereby a blue copy of $F_{n}$ in $G$, at least one of $H_1H_5$ and $H_3H_5$ must be red, say $H_1H_5$ is red. Since $H_1H_2$ and $H_1H_4$ are red, it follows that $H_2H_5$ and $H_{4}H_{5}$ must be blue, and hence $H_2H_4H_5H_2$ is a blue triangle, a contradiction.
\end{proof}

By Claim~\ref{Claim:nNoSmall}, there are only four parts in $G$ and they are large. Recall that $H_1H_2H_3H_4H_1$ is a red cycle and $\{H_1H_3,H_2H_4\}$ is a blue $2$-matching. We can then regard $H_1\cup H_3$ and $H_2\cup H_4$ as two parts of a Gallai partition of $G$ and the edges between these parts are all red, which contradicts the minimality of $t$.

For Type $2$, we first consider the case where $t\geq 5$. Outside $\{H_1,H_2,H_3,H_4\}$, there are small parts $H_5,H_6,\ldots,H_t$. For each such part $H_i$ with $5\leq i\leq t$, since $H_2H_3$ is blue, to avoid a blue triangle of the form $H_2H_4H_iH_2$, at least one of the edges $H_2H_i$ and $H_3H_i$ must be red.

First suppose one is red, say $H_2H_i$ is red and $H_3H_i$ is blue. Since $H_1H_2$ and $H_2H_i$ are red, it follows that $H_1H_i$ must be blue, and hence $H_1H_3H_iH_1$ is a blue triangle, a contradiction.

We may therefore assume that for all $H_i$ with $5\leq i\leq t$, we have that the edges $H_2H_i$ and $H_3H_i$ are red. To avoid a red triangle, the edges $H_1H_i$ and $H_4H_i$ are blue. By minimality of $t$, we have $t = 5$ since all parts $H_{i}$ for $i \geq 5$ have the same color on edges to $H_{j}$ for $j \leq 4$. Clearly, $H_1H_2H_{5}H_3H_4H_1$ is a red cycle and $H_1H_{5}H_4H_2H_3H_1$ is a blue cycle. We may then apply the same arguments as in Case~\ref{Case:nr5} to arrive at a contradiction.

We may therefore assume that $t=r=4$. Since the edges $H_1H_2$ and $H_1H_4$ are red, there are at most $n - 1$ independent red edges within $H_{2} \cup H_{4}$, so by deleting $2n-2$ vertices in $H_2\cup H_4$, there are no red edges remaining in $H_2$ and no red edges in $H_4$. Similarly, by deleting $2n-2$ vertices in $H_1\cup H_3$, there are no red edges remaining in $H_2$ and no red edges remaining in $H_4$. Symmetrically, if we delete $4n-4$ vertices in $H_1\cup H_2\cup H_3\cup H_4$, there are no blue edges in $H_i$ for $1 \leq i \leq 4$. This means that
$$
|G|\leq 4[m(k - 2, n) - 1]+8n-8<m,
$$
a contradiction.

\begin{case}
$r=3$.
\end{case}

The triangle in the reduced graph cannot be monochromatic so without loss of generality, suppose all edges from $H_{1}$ to $H_{2} \cup H_{3}$ are red, and $H_2H_3$ is blue. To avoid a red or blue triangle, any remaining parts are partitioned into the following sets.
\begin{itemize}
\item Let $A$ be the set of parts outside $H_1,H_2,H_3$ each with all blue edges to $H_1,H_3$ and all red edges to $H_2$,
\item Let $B$ be the set of parts outside $H_1,H_2,H_3$ each with all red edges to $H_2,H_3$ and all blue edges to $H_1$,
\item Let $C$ be the set of parts outside $H_1,H_2,H_3$ each with all blue edges to $H_1,H_2$ and all red edges to $H_3$.
\end{itemize}
Note that $|G|=|A|+|B|+|C|+|H_1|+|H_2|+|H_3|$ (see Figure~\ref{Fig:r3}).

\begin{figure}[H]
\begin{center}
 \includegraphics{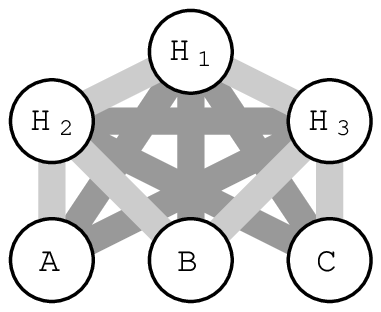}
 \caption{Structure of $G$ \labelz{Fig:r3}}
\end{center}
\end{figure}

We first consider the subcase $B\neq \emptyset$. Then we have the following claims.\\

\begin{claim}\labelz{Claim:nAC}
$|A|+|C|\leq 2n-2$.
\end{claim}

\begin{proof}
Assume, to the contrary, that $|A|+|C|\geq 2n-1$ and let $v \in B$. Note that each edge from $B$ to $A\cup C$ is either red or blue. If there is a vertex $v \in B$ with at least $n$ red edges to $A \cup C$, then these edges along with the red edges from $H_{3}$ to $B \cup C$ and the red edges from $H_{2}$ to $B \cup A$ form a red copy of $F_{n}$. This means that $v$ must have at least $n$ blue edges to $A \cup C$. Then these edges along with the blue edges from $H_{1}$ to $A \cup C$ form a blue copy of $F_{n}$ for a contradiction.
\end{proof}

\begin{claim}\labelz{Claim:nB}
$|B|\leq 2n-2$.
\end{claim}

\begin{proof}
Assume, for a contradiction, that $|B|\geq 2n-1$. Then each edge from $B$ to $A \cup C$ is red or blue. If $A \cup C \neq \emptyset$, let $v \in A \cup C$. By the same argument as in the proof of Claim~\ref{Claim:nAC}, there is a monochromatic copy of $F_{n}$, so we may assume $A \cup C = \emptyset$. By minimality of $t$, we have that $B$ must be a single (small) part of the partition, and so $|B| \leq n - 1$, a contradiction.
\end{proof}

By Claims~\ref{Claim:nAC} and~\ref{Claim:nB}, we have $|A|+|C|\leq 2n-2$ and $|B|\leq 2n-2$. Since all edges from $H_{1}$ to $H_{2} \cup H_{3}$ are red, there can be at most $n - 1$ disjoint red edges within $H_{2} \cup H_{3}$. It follows that by deleting at most $2n-2$ vertices in $H_2\cup H_3$, there will be no red edges remaining in $H_2\cup H_3$. Similarly, by deleting at most $2n-2$ vertices in each of $H_2$ and $H_3$, there will be no blue edges remaining in $H_2$ or $H_3$. Also, by deleting at most $4n-4$ vertices from $H_1$, there will be no red edges or blue edges remaining in $H_{1}$. Putting these together, by deleting at most $10n-10$ vertices from $H_1\cup H_2\cup H_3$, there are no red and blue edges remaining in any of $H_1$, $H_2$ or $H_3$. This means that
\beqs
|G| & = & |A|+|B|+|C|+|H_1|+|H_2|+|H_3|\\
~ & \leq & (4n-4)+(10n-10)+3[m(k - 2, n) - 1]\\
~ & < & m,
\eeqs
a contradiction.

Finally, we consider the subcase $B=\emptyset$. First two claims about $A$ and $C$.

\begin{claim}\labelz{Claim:nACnoB}
$|A\cup C|\leq 6n-6$.
\end{claim}

\begin{proof}
Since $H_{1}$ has all blue edges to $A \cup C$, there are at most $n - 1$ disjoint blue edges within $A \cup C$. Deleting $2n - 2$ vertices from $A \cup C$ produces a new subgraph, say $D$, with no blue edges. The Gallai partition of $G$ restricted to $D$ must therefore have all small parts and red edges in between the parts so by Lemma~\ref{Lemma:MonoSmallParts}, $|D| \leq 4n - 4$. This, in turn, means that $|A \cup C| \leq (4n - 4) + (2n - 2) = 6n - 6$.
\end{proof}

\begin{claim}\labelz{Claim:ACnotempty}
$A \neq \emptyset$ and $C \neq \emptyset$.
\end{claim}

\begin{proof}
First suppose that both $A = \emptyset$ and $C = \emptyset$. Then $G = H_{1} \cup H_{2} \cup H_{3}$. With only $3$ parts, this Gallai partition can be reduced down to $2$ parts, contradicting the assumptions of this case.

Then suppose, without loss of generality, that $C = \emptyset$ and $A \neq \emptyset$. With exactly $4$ parts in the partition, we may apply the same argument as the last part of Case~\ref{Case:r4}.
\end{proof}

We may delete at most $2n-2$ vertices in $H_1$ and at most $2n-2$ vertices in $H_2\cup H_3$ and leave behind no red edges within $H_{1}$ or within $H_{2} \cup H_{3}$. Also since Claim~\ref{Claim:ACnotempty} gives $A \neq \emptyset$ and $C \neq \emptyset$, by deleting at most $2n-2$ vertices in $H_1\cup H_3$ and at most $2n-2$ vertices in $H_1\cup H_3$, there must be no blue edges remaining in either $H_{1} \cup H_{2}$ or $H_{1} \cup H_{3}$. This comes to a total of $8n - 8$ removed vertices, meaning that
\beqs
|G| & = & |A|+|C|+|H_1|+|H_2|+|H_3|\\
~ & \leq & (6n-6)+(8n-8)+3[gr_{k-2}(K_3:F_n) - 1]\\
~ & < & m,
\eeqs
a contradiction, completing the proof of Lemma~\ref{Lem:Fn-Up}.
\end{proof}

Finally the lower bound lemma.

\begin{lemma}\labelz{Thm:Fn-Down}
For $k\geq 2$,
$$
gr_k(K_3;F_n)\geq \begin{cases}
4n\times 5^{\frac{k-2}{2}}+1, &\mbox {\rm if}~k~is~even,\\[0.2cm]
2n\times 5^{\frac{k-1}{2}}+1, &\mbox {\rm if}~k~is~odd.
\end{cases}
$$
\end{lemma}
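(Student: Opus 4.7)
The plan is to exhibit, for each $k \geq 1$, an explicit Gallai $k$-coloring of a complete graph on $n_k$ vertices that contains no monochromatic $F_n$, where $n_k = 4n \cdot 5^{(k-2)/2}$ when $k$ is even and $n_k = 2n \cdot 5^{(k-1)/2}$ when $k$ is odd. Since $5 n_k = n_{k+2}$ in either parity, this reduces to a base case plus the standard $K_5$-blow-up recursion.

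For the base cases I would take $G_1$ to be a monochromatic $K_{2n}$, which cannot contain $F_n$ simply because $|V(F_n)| = 2n+1$, and I would take $G_2$ to be any $2$-edge-coloring of $K_{4n}$ with no monochromatic $F_n$, whose existence is guaranteed by the lower bound $R(F_n, F_n) \geq 4n+1$ in Proposition~\ref{Prop:Fn}. For the inductive step, given a Gallai coloring $G_k$ on $n_k$ vertices using colors $\{1, \dots, k\}$ with no monochromatic $F_n$, I would form $G_{k+2}$ by taking five vertex-disjoint copies of $G_k$ and inserting edges of two fresh colors $k+1$ and $k+2$ between the five copies so that the reduced $K_5$ is the unique $2$-coloring with no monochromatic triangle, i.e.\ its two color classes are complementary $5$-cycles. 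The resulting graph has $5 n_k = n_{k+2}$ vertices. No rainbow triangle appears: a triangle lying inside a single copy is handled by the inductive hypothesis; a triangle with two vertices in one copy $A$ and one vertex in another copy $B$ has both $A$--$B$ edges sharing the single cross color between $A$ and $B$, so uses at most two colors; and a triangle spanning three distinct copies lives entirely in the $2$-colored reduced $K_5$.

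The crux is checking that no monochromatic $F_n$ is introduced. A fan in an old color $j \leq k$ is a connected subgraph whose edges all have color $j$, and such edges live only within a single copy of $G_k$, so the whole fan lies in one copy and induction forbids it. A fan in a new color, say color $k+1$, centered at a vertex $v$ of some copy $A$ would require $n$ disjoint edges of color $k+1$ among the color-$(k+1)$ neighbors of $v$; those neighbors are exactly the vertices of the two copies $B, E$ adjacent to $A$ in the color-$(k+1)$ cycle $C_5$. But $B$ and $E$ are non-adjacent in that $C_5$, so inter-copy edges between $B$ and $E$ carry color $k+2$, while intra-copy edges inside $B$ or $E$ use only colors $\leq k$; hence there are no color-$(k+1)$ edges at all among the color-$(k+1)$ neighbors of $v$, and no such fan can exist. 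The analogous argument rules out color-$(k+2)$ fans.

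The only substantive obstacle is the verification in the previous paragraph, namely that the non-adjacency of $B$ and $E$ in the color-$(k+1)$ pentagon, combined with the fact that intra-copy edges use no new color, really eliminates every potential monochromatic fan centered at a cross-color neighbor structure. Once this is in hand, the parity bookkeeping is immediate from the recurrence $n_{k+2} = 5 n_k$ and the two base cases, yielding the claimed lower bound on $gr_k(K_3; F_n)$.
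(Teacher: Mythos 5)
Your proposal is correct and takes essentially the same approach as the paper: both arguments build the extremal coloring by iterated blow-ups of the unique triangle-free $2$-coloring of $K_5$, starting from a monochromatic $K_{2n}$ that is simply too small to contain $F_n$, and both verify that a fan in a new color would need $n$ disjoint new-color edges among vertices of two non-adjacent copies, which cannot exist. The only cosmetic difference is in the even case: the paper produces $G_{2i}$ from $G_{2i-1}$ by joining two copies with a single fresh color, whereas you anchor the even parity at a Ramsey-extremal $2$-coloring of $K_{4n}$ (guaranteed by $R(F_n,F_n)\geq 4n+1$) and then run the same five-fold blow-up recursion in both parities; the resulting orders agree.
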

\begin{proof}
We prove this result by inductively constructing a coloring of $K_n$ where
$$
n=\begin{cases}
4n\times 5^{\frac{k-2}{2}}, &\mbox {\rm if}~k~is~even,\\[0.2cm]
2n\times 5^{\frac{k-1}{2}}, &\mbox {\rm if}~k~is~odd,
\end{cases}
$$
which contains no rainbow triangle and no monochromatic copy of $F_3$. Let $G_1$ be a $1$-colored complete graph on $2n$ vertices, most notably too small to contain a copy of $F_n$. Without loss of generality, suppose this coloring uses color $1$.

Suppose we have constructed a coloring of $G_{2i-1}$ where $i$ is a positive integer and $i\geq 2$, with $2i-1<k$, using the $2i-1$ colors $1,2,\ldots,2i-1$ and having order $n_{2i-1}=2n\times 5^{i-1}$ such that $G_{2i-1}$ contains no rainbow triangle and no monochromatic copy of $F_n$.

If $k=2i$, we construct $G_{2i}=G_k$ by making two copies of $G_{2i-1}$ and inserting all edges in between the copies in color $k$. This coloring clearly contains no rainbow triangle and no monochromatic copy of $F_n$ and has order
$$
n=2\cdot 2n\cdot 5^{\frac{k-2}{2}} = 4n\times 5^{\frac{k-2}{2}},
$$
as claimed.

Otherwise, suppose $k\geq 2i+1$. We construct $G_{2i+1}$ by making five copies of $G_{2i-1}$ and inserting edges of colors $2i$ and $2i+1$ between the copies to form a blow-up of the unique $2$-colored $K_5$ with no monochromatic triangle. This coloring clearly contains no rainbow triangle and there is no monochromatic triangle in either of the two new colors so there can be no monochromatic copy of $F_n$ in $G_{2i+1}$. With
$$
|G_{2i + 1}| = 5\cdot 2n\cdot 5^{i - 1} = 2n\times 5^{\frac{k-1}{2}},
$$
as claimed, completing the proof.
\end{proof}



\newpage

\begin{appendices}

\section{Proof of Lemma~\ref{Lem:F3}}

\begin{proof}
Define the function
$$
g(k) =\begin{cases}
14\times 5^{\frac{k-2}{2}}-1, &\mbox {\rm if}~k~is~even;\\[0.2cm]
33\times 5^{\frac{k-3}{2}}, &\mbox {\rm if}~k=3,5;\\[0.2cm]
33\times 5^{\frac{k-3}{2}}+a\times 5^{\frac{k-5}{2}}-a, &\mbox {\rm
if}~k~is~odd,~k\geq 7,
\end{cases}
$$
where $a>\frac{3}{4}$.

The goal of this lemma is to show that
$$
gr_{k}(K_{3} : F_{3}) \leq g(k).
$$
We prove this
upper bound by induction on $k$. The case $k=1$ is trivial and the
case $k=2$ is precisely $R(F_3,F_3)=13$. We therefore suppose $k\geq
3$ and let $G$ be a coloring of $K_n$ where $n = g(k)$.

Since $G$ is a Gallai coloring, it follows from
Theorem~\ref{Thm:G-Part} there is a Gallai partition of $V(G)$.
Suppose red and blue are the two colors appearing on edges between
parts in the Gallai partition. Let $t$ be the number of parts in the
partition and choose such a partition where $t$ is minimized. Since
$R(F_3,F_3)=13$, the reduced graph must have at most $12$ vertices
so $t \leq 12$. Let $r$ be the number of parts of the Gallai
partition with order at least $3$. Let $H_{i}$ be the parts of this
Gallai partition and, without loss of generality, suppose that
$|H_{i}| \geq |H_{i + 1}|$ for all $i$. This means that $|H_{r}|
\geq 3$ and $|H_{r + 1}| \leq 2$.

First an easy fact that will be used throughout the proof.

\begin{fact}\labelz{Fact:3K2}
If $X$ and $Y$ are two (non-empty) parts of a Gallai partition, say
with all red edges in between them, then the subgraph of $Y$ (and
similarly $X$) containing precisely the red edges contains no
monochromatic copy of $3K_{2}$. This means that the removal of at
most $4$ vertices from $Y$ yields a subgraph with no red edges.
\end{fact}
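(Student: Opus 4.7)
The plan is a short contradiction argument in two steps, using only the ambient hypothesis (carried throughout the inductive proof of Lemma~\ref{Lem:F3}) that $G$ is a Gallai colored complete graph containing no monochromatic copy of $F_{3}$.

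For the first assertion, I would suppose for contradiction that the subgraph of $Y$ formed by the red edges contains a copy of $3K_{2}$; call its three disjoint red edges $e_{1}, e_{2}, e_{3}$, with six distinct endpoints $v_{1}, \ldots, v_{6} \in Y$. Since $X$ is non-empty, pick any $u \in X$. All edges between $X$ and $Y$ are red, so each $uv_{i}$ is red. But then $\{u v_{1}, \ldots, u v_{6}\} \cup \{e_{1}, e_{2}, e_{3}\}$ is a red copy of $K_{1} + 3K_{2} = F_{3}$ centered at $u$, contradicting the hypothesis. The corresponding statement for $X$ follows by interchanging the roles of $X$ and $Y$.

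For the second assertion, the first assertion says that the matching number of the red subgraph of $Y$ is at most $2$. Let $M$ be a maximum red matching in $Y$, so $|M| \leq 2$ and $M$ touches at most four vertices. If some red edge $e$ of $Y$ were disjoint from the vertex set of $M$, then $M \cup \{e\}$ would be a strictly larger red matching, contradicting maximality. Hence the (at most four) endpoints of $M$ form a vertex cover of the red subgraph of $Y$, so deleting them removes all red edges of $Y$.

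The only subtlety is having a vertex $u \in X$ to serve as the center of the would-be $F_{3}$, which is where the non-emptiness assumption is used. What makes Fact~\ref{Fact:3K2} a workhorse in the broader proof is its ubiquity rather than its depth: whenever two parts of a Gallai partition are joined in red, it gives the uniform bound that removing at most four vertices per part destroys every red edge inside that part, and this is precisely the counting lever that drives the inductive estimates against $g(k)$ in every case of the upcoming analysis.
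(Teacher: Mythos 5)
Your proof is correct and is essentially the paper's argument: the paper justifies the first assertion with the same one-line observation that a red $3K_{2}$ inside $Y$ together with any vertex of $X$ yields a red $F_{3}$ centered in $X$. Your maximum-matching/vertex-cover argument for the second assertion is the standard (implicit) reason the paper gives the bound of $4$ deleted vertices, so there is nothing to add.
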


Indeed, otherwise there would be a red copy of $F_{3}$ centered in $X$.

We first consider the case $k=3$, so $n=33$. If $2\leq t\leq 3$,
then by the minimality of $t$, we may assume $t=2$, say with
corresponding parts $H_1$ and $H_2$. Without loss of generality,
suppose all edges between $H_1$ and $H_2$ are blue. Since $n\geq
33$, we must have $|H_1| \geq 17$. By Fact~\ref{Fact:3K2}, the
subgraph of $H_{1}$ containing precisely the blue edges contains no
copy of $3K_{2}$. We may therefore delete at most $4$ vertices from
$H_1$ so that $H_1$ no longer contains any blue edges. This yields a
$2$-colored $K_{13}$, but since $R(F_3,F_3)=13$, it follows that
there is a monochromatic $F_3$ within $H_{1}$, a contradiction. This
implies that $t \geq 4$.

If $r\geq 5$ and $t\geq 6$, then any choice of $6$ parts containing
the $5$ parts $\mathscr {H}=\{H_1,\ldots,H_5\}$ will contain a
monochromatic triangle in the corresponding reduced graph. Such a
triangle must contain at least $2$ parts from $\mathscr {H}$. The
corresponding subgraph of $G$ must therefore contain a monochromatic
copy of $F_3$, a contradiction. Thus, we may assume that either
$4\leq t\leq 5$ or $r\leq 4$. Furthermore, we have the following
easy tools.

\begin{claim}\labelz{Clm:1}
If $t \geq 9$, then there are at most $7$ parts of order at least $2$.
\end{claim}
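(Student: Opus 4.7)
The plan is to argue by contradiction: assume $t\geq 9$ and that at least eight parts, say $H_1,H_2,\dots,H_8$, have order at least $2$, and derive a monochromatic copy of $F_3$ in $G$, contradicting our hypothesis.

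The argument rests on two structural observations that I would record first. \emph{Observation A}: if the reduced graph contains a monochromatic $K_4$ on parts $P_1,P_2,P_3,P_4$ with at least three of them---say $|P_1|,|P_2|,|P_3|\geq 2$---of order at least $2$, then $G$ contains a monochromatic $F_3$. Indeed, $G[P_1\cup P_2\cup P_3]$ contains a monochromatic complete tripartite graph $K_{|P_1|,|P_2|,|P_3|}$, which (since each $|P_i|\geq 2$) has matching number at least $3$, and any vertex of $P_4$ is adjacent in that colour to all of $P_1\cup P_2\cup P_3$ and so serves as the apex. \emph{Observation B}: if the reduced graph has a monochromatic triangle on $Q_1,Q_2,Q_3$ with at least two parts---say $|Q_1|,|Q_2|\geq 3$---of order at least $3$, then $G$ again contains a monochromatic $F_3$: the monochromatic complete bipartite $K_{|Q_1|,|Q_2|}$ has a matching of size $3$, and any vertex of $Q_3$ is an apex.

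I would then select nine parts of the Gallai partition containing all of $H_1,\dots,H_8$, so that at most one of these nine parts is a singleton. Let $R$ denote the $2$-coloured $K_9$ on these nine parts. By $R(K_3,K_4)=9$ applied in each of the two colour orderings, $R$ contains either a monochromatic $K_4$, or a red $K_3$ together with a blue $K_3$. Any monochromatic $K_4$ in $R$ involves at most one singleton among its four parts, hence at least three parts of order $\geq 2$, so Observation A gives a monochromatic $F_3$---a contradiction. Thus $R$ contains no monochromatic $K_4$, and must contain both a red triangle and a blue triangle.

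The final and most delicate step is to derive a monochromatic $F_3$ from the presence of a red $K_3$ and a blue $K_3$ in $R$, no monochromatic $K_4$, and at most one singleton among the nine parts. Fix a red triangle $P_1P_2P_3$ in $R$. Since no red $K_4$ exists, no other part of the nine is red-adjacent to all of $P_1,P_2,P_3$, so each of the remaining six parts contributes at least one blue edge to $\{P_1,P_2,P_3\}$. Pigeonhole then gives some $P_i$---say $P_1$---with blue edges to at least two other parts $B_1,B_2$. A short case analysis on the colour of $B_1B_2$ and on which $P_j$ each of the other parts is blue-adjacent to eventually forces a monochromatic $K_4$ in $R$ containing at least three parts of order $\geq 2$ (for instance, if $B_1B_2$ is red and some $B_k$ is red-adjacent to both $B_1$ and $B_2$, or, symmetrically, through the blue triangle supplied by the previous Ramsey step), contradicting Observation A. The principal difficulty in executing the argument is exactly this bookkeeping: ensuring that every monochromatic $K_4$ uncovered during the case analysis contains sufficiently many parts of order $\geq 2$ so that Observation A applies. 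Once this is checked in each subcase, the claim follows.
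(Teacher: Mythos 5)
There is a genuine gap in the final step. Your reduction via $R(K_3,K_4)=9$ correctly splits into two cases, and Observation A handles a monochromatic $K_4$ (with at most one singleton among the nine parts, such a $K_4$ has at least three parts of order $\geq 2$, and $K_{2,2,2}$ indeed has a matching of size $3$). But in the remaining case you assert that a case analysis starting from a red triangle and a blue triangle ``eventually forces a monochromatic $K_4$'' in the reduced $K_9$. That cannot be true: since $R(K_4,K_4)=18$, there exist $2$-colorings of $K_9$ with no monochromatic $K_4$ at all, so no amount of bookkeeping can produce one. Observation B does not rescue this case either, because it needs two parts of order at least $3$, and all eight non-singleton parts may have order exactly $2$. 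A monochromatic triangle on three parts of order $2$ only yields a matching of size $2$ in the neighborhood of an apex (five vertices cannot carry three disjoint edges), i.e.\ only an $F_2$, not an $F_3$.

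The missing ingredient is the bowtie. The paper's proof takes any nine parts containing the eight parts of order at least $2$ and applies $R(F_2,F_2)=9$ (Proposition~\ref{Prop:Fn}(1)) to the reduced $2$-colored $K_9$, obtaining a monochromatic copy of $F_2$ on five parts, at least four of which have order at least $2$. Writing the bowtie as a center part $c$ and wing pairs $\{x_1,y_1\}$ and $\{x_2,y_2\}$, at least one wing pair has both parts of order $\geq 2$ and therefore contributes two disjoint monochromatic edges in $G$, while the other wing pair contributes at least one more disjoint edge; any vertex of $c$ is joined in that color to all of them, giving a monochromatic $F_3$. If you replace your $R(K_3,K_4)$ step and the attempted $K_4$-forcing argument with this single application of $R(F_2,F_2)=9$ and the lifting observation, the proof closes.
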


\begin{proof}
Suppose that there are at least $8$ parts of order at least $2$, say
$\mathscr{H} = \{H_1,H_2,\ldots,H_8\}$. Then any choice $9$ parts
containing $\mathscr{H}$ will contain a mono-chromatic copy of $F_2$
in the reduced graph. Note that such a copy of $F_2$ must contain at
least $4$ vertices corresponding to parts from $\mathscr{H}$. This
means that the corresponding subgraph of $G$ must contain a
monochromatic copy of $F_3$, a contradiction.
\end{proof}

\begin{claim}\labelz{Clm:C5}
If $X$ and $Y$ are two parts of a Gallai partition of a graph with
no monochromatic copy of $F_{3}$, say with all red edges in between
them, and $|X| \geq 3$, then the subgraph of $Y$ containing
precisely the red edges is a subgraph of $C_{4}$, $C_{5}$, or
$2K_{3}$.
\end{claim}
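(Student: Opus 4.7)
The plan is to pin down the structure of the red subgraph $R_Y$ of $Y$ via two constraints and then enumerate the graphs satisfying both. First, I would argue that the matching number of $R_Y$ is at most $2$: if three pairwise disjoint red edges lay inside $Y$, then any vertex $x \in X$ --- which is red-adjacent to all of $Y$ --- would be the center of a red $F_3$. This is essentially the content of Fact~\ref{Fact:3K2}, and only requires $|X| \geq 1$.

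Next I would show that $R_Y$ has maximum degree at most $2$, and here the hypothesis $|X| \geq 3$ enters. Suppose, for contradiction, that some $v \in Y$ has three red neighbors $u_1, u_2, u_3 \in Y$. Pick three distinct vertices $x_1, x_2, x_3 \in X$. All edges from $X$ to $Y$ are red, so in particular the three ``rim'' edges $x_1u_1$, $x_2u_2$, $x_3u_3$ are pairwise disjoint and red; moreover, $v$ is red-adjacent to each of $u_1, u_2, u_3, x_1, x_2, x_3$. Together these form a red copy of $F_3$ centered at $v$, a contradiction. This argument is a close cousin of Fact~\ref{Fact:Deg2}, with the rim of the candidate fan supplied for free by the all-red bipartite structure between $X$ and $Y$.

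With both constraints in hand, the remainder is a finite case check. Any graph of maximum degree at most $2$ is a disjoint union of paths and cycles, and any $P_n$ or $C_n$ with $n \geq 6$ already has matching number at least $3$. Hence each component of $R_Y$ lies in $\{K_1, K_2, P_3, P_4, P_5, C_3, C_4, C_5\}$, and since the total matching number is at most $2$, either a single component from $\{P_4, P_5, C_4, C_5\}$ accounts for everything, or at most two components are drawn from $\{K_2, P_3, C_3\}$. Enumerating the resulting list --- the single-component options together with $2K_2$, $K_2+P_3$, $K_2+C_3$, $2P_3$, $P_3+C_3$, and $2C_3 = 2K_3$ --- each candidate is routinely verified to embed in at least one of $C_4$, $C_5$, or $2K_3$, completing the argument. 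The only step requiring real care is the maximum-degree bound; the rest reduces to standard $F_3$-avoidance and bookkeeping over a short list of graphs.
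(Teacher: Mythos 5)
Your proposal is correct and follows essentially the same route as the paper: bound the matching number of the red subgraph of $Y$ by $2$ (Fact~\ref{Fact:3K2}), bound its maximum degree by $2$ using three vertices of $X$ as the rims of a would-be red $F_3$ (this is exactly what Figure~\ref{Fig:Deg3} depicts), and then observe that such graphs embed in $C_4$, $C_5$, or $2K_3$. You simply carry out the final enumeration more explicitly than the paper does.
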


\begin{proof}
Then in order to avoid creating a red copy of $F_{3}$ centered in
$Y$ using the red edges to $X$ as in Figure~\ref{Fig:Deg3}, the
subgraph of $Y$ containing precisely the red edges has maximum
degree at most $2$. By Fact~\ref{Fact:3K2}, this red subgraph of $Y$
also contains no copy of $3K_2$. Thus, the subgraph induced by the
red edges within $Y$ must be a subgraph of $C_{4}$, $C_{5}$, or
$2K_{3}$.
\end{proof}

\begin{figure}[H]
\begin{center}
\includegraphics{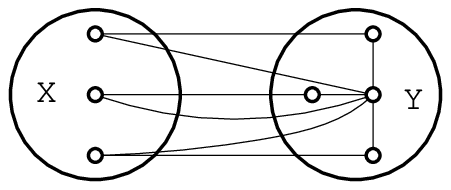}
\caption{A red copy of $F_{3}$ \labelz{Fig:Deg3}}
\end{center}
\end{figure}

This also leads to another related claim.

\begin{claim}\labelz{Claim:F1}
In any Gallai $3$-colored $K_9$ using colors $1,2,3$ in which the
subgraph containing precisely those edges of color $1$ and the
subgraph containing precisely those edges of color $2$ each are
subgraphs of $C_{4}$, $C_{5}$, or $2K_{3}$, there must be a
monochromatic copy of $F_3$ in color $3$.
\end{claim}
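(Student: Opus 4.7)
The plan is to combine an edge count that forces color $3$ to be extremely dense with Erd\H{o}s--Gallai to produce a color-$3$ matching of size $3$ inside the neighborhood of a well-chosen vertex.

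First I will use that each of $C_4$, $C_5$, $2K_3$ has at most $6$ edges, maximum degree at most $2$, and matching number at most $2$. Consequently colors $1$ and $2$ each have at most $6$ edges, giving $|E_3|\ge\binom{9}{2}-12=24$ and $d_3(v)\ge 8-d_1(v)-d_2(v)\ge 4$ for every vertex. Since each of colors $1,2$ covers at most $6$ vertices with positive degree, at least three vertices have $d_1(v)=0$ (and hence $d_3(v)\ge 6$), and symmetrically for color $2$.

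Next I look for a vertex $v$ with $d_1(v)=d_2(v)=0$. If one exists then $|N_3(v)|=8$ and $N_3(v)$ carries at least $\binom{8}{2}-12=16$ color-$3$ edges. By Erd\H{o}s--Gallai, a graph on $8$ vertices with matching number at most $2$ has at most $\max\bigl(\binom{5}{2},\,\binom{2}{2}+2\cdot 6\bigr)=13$ edges, so there must be a color-$3$ matching of size $3$ in $N_3(v)$, completing the proof. Otherwise the color-$1$ zero set and the color-$2$ zero set are disjoint, each of size exactly $3$, which pins the structures of colors $1$ and $2$ to be near-extremal (essentially $2K_3$ apiece).

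In this residual case I will take a Gallai partition with the minimum number of parts $t\ge 2$. Since colors $1$ and $2$ have maximum degree $2$, any color-$1$ or color-$2$ edge in the reduced graph forces both incident parts to have size at most $2$; together with an edge count $\sum\binom{|H_i|}{2}\ge 24$ ruling out a pure color-$\{1,2\}$ between-palette, this forces color $3$ to appear between parts. A case analysis on whether the between-colors are $\{3\}$, $\{1,3\}$, or $\{2,3\}$ then yields a center vertex whose color-$3$ neighborhood contains a $3$-matching, stitched together from between-part color-$3$ edges (automatically disjoint across disjoint pairs of parts) and within-part color-$3$ edges. The main obstacle will be a Gallai partition with small, balanced parts, where the needed disjoint color-$3$ edges must be assembled carefully and the no-rainbow-triangle constraint used to exclude obstructive color-$1$/color-$2$ patterns inside the chosen neighborhood.
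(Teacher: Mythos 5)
Your first case is correct and is a nice alternative to the paper's argument: if some vertex $v$ has $d_1(v)=d_2(v)=0$, then $N_3(v)$ has $8$ vertices carrying at least $\binom{8}{2}-12=16$ color-$3$ edges, while Erd\H{o}s--Gallai bounds an $8$-vertex graph with matching number at most $2$ by $\max\left\{\binom{5}{2},\binom{2}{2}+2\cdot 6\right\}=13$ edges; so a color-$3$ $3$-matching exists in $N_3(v)$ and $v$ is the center of a color-$3$ copy of $F_3$. (The paper instead deletes such a vertex and uses a circumference bound for $2$-connected graphs.)

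The gap is in your residual case, and it is twofold. First, the structural claim driving it is false: if the two zero-sets are disjoint they need not each have size exactly $3$, and the colors need not be ``essentially $2K_3$ apiece.'' For example, color $1$ a $C_4$ on four vertices and color $2$ a $C_5$ on the remaining five gives disjoint zero-sets of sizes $5$ and $4$; this is a legitimate Gallai coloring (no triangle can contain an edge of each color), so your reduction to near-extremal $2K_3$ structures does not go through. Second, even with a corrected setup, the remaining argument is only a plan: you name the case split on the between-part palette and explicitly flag ``the main obstacle'' of small balanced parts without resolving it, yet this is exactly where the work lies. Counting no longer closes it --- a vertex $v$ with $d_1(v)=0$ and $d_2(v)=2$ has only $6$ color-$3$ neighbors, and $\binom{6}{2}-12=3$ guaranteed color-$3$ edges there is nowhere near enough to force a $3$-matching --- so one must exploit the no-rainbow-triangle condition to pin down how the color-$1$ and color-$2$ subgraphs can interleave (this is what the paper does, ultimately reducing to the disjoint $C_4$/$C_5$ configuration and checking it by hand). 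As written, the proposal proves the claim only when some vertex avoids both colors $1$ and $2$.
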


\begin{proof}
Let $G$ be a $3$-coloring of $K_{9}$, say using red (color $1$),
blue (color $2$), and green (color $3$). Let $G_{R}$, $G_{B}$, and
$G_{G}$ be the subgraphs of $G$ containing precisely the red, blue,
and green edges respectively and suppose each of $G_{R}$ and $G_{B}$
are subgraphs of $C_{4}$, $C_{5}$, or $2K_{3}$. Then since
$\Delta(G_{R}), \Delta(G_{B}) \leq 2$, we have $\delta(G_{G}) \geq
4$.

First suppose $|G_{R} \cup G_{B}| \leq 8$ so there is at least one
vertex $w \in G$ with no incident red or blue edges and let $H =
G_{G} \setminus w$. If $H$ is $2$-connected, then the circumference
of $H$ is at least $\min \{ 2\delta(H), |H|\} \geq 6$ so $H$
contains a copy of $3K_{2}$. This along with $w$ forms a green copy
of $F_{3}$ for a contradiction, so $H$ is not $2$-connected. Then
$G_{R}$ and $G_{B}$ cannot be subgraphs of $C_{4}$, $C_{5}$, or
$2K_{3}$.

Next suppose $|G_{R} \cup G_{B}| = 9$ so $\Delta(G_{G}) \leq 7$. If
there is a vertex $v$ with degree $2$ in one of red or blue (say
blue) and degree at least $1$ in the other color in $\{$red,
blue$\}$ (so red), then to avoid a rainbow triangle, either red and
blue must be a subgraph of the unique $2$-coloring of $K_{5}$ with
no monochromatic triangle or $G_{B}$ must be a $C_{4}$ and a chord
of this $C_{4}$ must be red. The former case contradicts $|G_{R}
\cup G_{B}| = 9$ so $G_{B}$ must be a copy of $C_{4}$. With one red
edge as a chord of the blue $C_{4}$ and the remaining red edges
disconnected from this red edge, the remaining red edges must induce
a graph on at most $3$ vertices, again contradicting the assumption
that $|G_{R} \cup G_{B}| = 9$. This means there can be no vertex
with red (or blue) degree $2$ and blue (respectively red) degree
$1$.

If there is a vertex $v'$ with one incident edge in each of red and
blue, then to avoid a rainbow triangle, the edge between those
neighbors must be either red or blue. Since the maximum degree of
red and blue is at most $2$ and to avoid a vertex $v$ as above,
these three vertices can have no more incident red or blue edges.
This means that $G_{R} \cup G_{B}$ must be disconnected. With so
many restrictions, the only way for $|G_{R} \cup G_{B}| = 9$ is if
the red (or blue) graph is a spanning subgraph of $C_{4}$ and the
blue (respectively red) graph is a spanning subgraph of $C_{5}$ and
these are disjoint. Such a coloring of $K_{9}$ clearly contains a
green copy of $F_{3}$ to complete the proof of Claim~\ref{Claim:F1}.
\end{proof}

When only two colors are present, we get even more by a similar argument.

\begin{fact}\labelz{Fact:K7}
In any Gallai $2$-colored $K_7$ using colors $1,2$ in which the
subgraph containing precisely those edges of color $1$ is a subgraph
of $C_{4}$, $C_{5}$, or $2K_{3}$, there must be a monochromatic copy
of $F_3$ in color $2$.
\end{fact}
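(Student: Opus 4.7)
The plan is to exploit how extremely restricted the red graph is. Since the red subgraph $R$ is contained in $C_4$, $C_5$, or $2K_3$, it has maximum degree at most $2$ and involves at most six vertices of positive degree, so at least one vertex of $K_7$ is red-isolated. I would fix such a vertex $v$; then $v$ sends only blue edges to the remaining six vertices $W$, furnishing the apex of a potential blue $F_3$.

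Having located $v$, the remaining task is to find a blue matching of size $3$ inside $W$, which together with $v$ completes the desired $F_3$. The key observation is a minimum-degree count within $W$: since every vertex has total red degree at most $2$ and none of its red neighbors can be $v$ (which is red-isolated), each vertex in $W$ has blue degree at least $(|W|-1)-2 = 3$ inside $W$. Thus the blue subgraph on $W$ satisfies $\delta \geq |W|/2 = 3$, and Dirac's theorem yields a Hamilton cycle on $W$, from which a blue perfect matching of size $3$ is immediate.

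I anticipate no serious obstacle, since the only inputs are the trivial maximum degree bound on $R$ and Dirac's theorem. If a more self-contained argument is preferred over invoking Dirac, one can instead split into three subcases according to whether $R \subseteq C_4$, $R \subseteq C_5$, or $R \subseteq 2K_3$ and exhibit the three disjoint blue edges explicitly: for example, the two diagonals of a red $C_4$ plus an edge between two red-isolated vertices, two diagonals of a red $C_5$ plus an edge using the remaining red-isolated vertex, or a perfect matching across the blue $K_{3,3}$ between the two red triangles.
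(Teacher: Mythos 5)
Your proof is correct and follows essentially the same approach the paper intends: the paper only justifies this Fact by analogy with its proof of Claim~\ref{Claim:F1}, which likewise takes a vertex isolated in the restricted color(s) as the fan center and uses a minimum-degree/long-cycle argument to extract the three disjoint edges, exactly as your Dirac-based step does on the remaining six vertices. Your write-up is, if anything, more explicit than the paper's, and the explicit three-case fallback you sketch also checks out.
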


We consider cases based on the value of $r$. If $r=0$, then since $n = 33$, there are at least $17 > R(F_{3}, F_{3})$ parts, a contradiction. If $r=1$, then by Claim~\ref{Clm:1}, there are at most $6$ parts of order $2$. With a total of $t \leq 12$ parts, there can be at most $11$ parts of order at most $2$. Since $n = 33$, we have $|H_1|\geq 16$ and $H_1$ has incident edges to other parts in both red and blue. Thus, by Fact~\ref{Fact:3K2}, $H_1$ contains no $3K_2$ in blue or red. By deleting at most $8$ vertices from $H_1$, what remains of $H_1$ contains no blue edges and no red edges. This yields a $1$-colored copy of $K_8$, which contains a monochromatic $F_3$, a contradiction.

We may therefore assume that $2\leq r\leq 5$. We distinguish the following cases to complete the proof.

\setcounter{case}{0}
\begin{case}\labelz{Case:F3-1}
$r=2$.
\end{case}

Suppose that blue is the color of the edges between $H_1$ and $H_2$.
By Claim~\ref{Clm:1}, there cannot be many vertices in small parts,
so $|H_1|+|H_2|\geq 18$. To avoid a blue $F_3$, there is no part
outside $H_1$ and $H_2$ with blue edges to $H_1\cup H_2$.

If there is a part in $G \setminus (H_{1} \cup H_{2})$ with all red
edges to $H_1\cup H_2$, then the subgraph induced by red edges in
$H_1 \cup H_2$ contains no $3K_2$.  Since $|H_1|+|H_2|\geq 18$,
deleting at most $4$ vertices in $H_1 \cup H_2$ results in a
$2$-colored of order at least $14$. This contains a monochromatic
copy of $F_{3}$ since $R(F_3,F_3)=13$, a contradiction. Thus, each
part other than $H_1$ and $H_2$ has both red and blue edges to
$H_1\cup H_2$.

Let $A$ be the set of parts with red edges to $H_1$ and blue edges
to $H_2$ and let $B$ be the set of parts with blue edges to $H_1$
and red edges to $H_2$. By the minimality of $t$, we must have $A
\neq \emptyset$ and $B \neq \emptyset$. Since $|H_1|+|H_2|\geq 18$,
we must have $|H_1|\geq 9$. Next suppose that $|A|\geq 3$. Then by
Claims~\ref{Clm:C5} and~\ref{Claim:F1},
$H_1$ contains a monochromatic copy of $F_3$, a contradiction. We
may therefore assume that $|A| \leq 2$.

Next suppose that $|B|\leq 10$, so $|A|+|B|\leq 12$ and
$|H_1|+|H_2|\geq 21$. Since the subgraph induced by the red edges
within each of $H_1$ and $H_2$ contains no copy of $3K_2$, we may
delete at most $8$ vertices from $H_1\cup H_2$ (at most $4$ vertices
from each of $H_{1}$ and $H_{2}$) so that what remains of $H_1\cup
H_2$ contains no red edges. This yields a $2$-colored $K_{13}$,
which contains a monochromatic copy of $F_{3}$ since
$R(F_3,F_3)=13$. We may therefore assume that $|B|\geq 11$.

Furthermore, with $|B| \geq 11$, by the same argument as above
(applying Claim~\ref{Claim:F1}), we must have $|H_{2}| \leq 8$.
This, in turn, means that $|H_{1}| \geq 10$ so $|H_{1} \cup B| \geq
21$. Since $A \neq \emptyset$, we may remove at most $8$ vertices
from $H_{1} \cup B$ (at most $4$ from each of $H_{1}$ and $B$) to
obtain a subgraph of order at least $13$ containing no red edges.
Since $R(F_{3}, F_{3}) = 13$, this subgraph contains a monochromatic
copy of $F_{3}$ for a contradiction, completing the proof of
Case~\ref{Case:F3-1}.

\begin{case}\labelz{Case:F3-2}
$r=3$.
\end{case}

Disregarding the relative orders of the parts $H_{1}$, $H_{2}$, and
$H_{3}$ for this case, we may suppose without loss of generality,
that the edges from $H_2$ to $H_3$ are red and all edges from
$H_{1}$ to $H_{2} \cup H_{3}$ are blue since a monochromatic
triangle among these large parts would produce a monochromatic copy
of $F_{3}$. We first claim that there is no part outside $H_1$,
$H_2$, and $H_3$ with blue edges to $H_1$ so suppose, to the
contrary, that there is such a part, say $H'$, with blue edges to
$H_1$. To avoid a blue triangle in the reduced graph, all edges from
$H'$ to $H_2\cup H_3$ must be red. Then $H'$ together with $H_2$ and
$H_3$ yields a red triangle in the reduced graph, yielding a red
$F_3$ in $G$, a contradiction. There can therefore be no such part
$H'$ with blue edges to $H_{1}$.

Thus all vertices outside $H_1 \cup H_2 \cup H_3$ have red edges to
$H_1$. Let $A$ be the set of parts with blue edges to $H_2$ and with
red edges to $H_3$. Let $B$ be the set of parts with red edges to
$H_2$ and with blue edges to $H_3$. Let $C$ be the set of parts with
blue edges to $H_2\cup H_3$. Suppose, for a contradiction, that
$|H_2 \cup H_3|\geq 17$. Since the subgraph induced by blue edges
within $H_2\cup H_3$ contains no $3K_2$, by deleting at most $4$
vertices from $H_2\cup H_3$ we obtain a subgraph of $H_2\cup H_3$
containing no blue edges. This yields a $2$-colored copy of
$K_{13}$, a contradiction since $R(F_{3}, F_{3}) = 13$. This means
$|H_{2} \cup H_{3}| \leq 16$.

First we consider the case when $C\neq \emptyset$. We claim that
each vertex of $C$ has at most $2$ incident edges in each of red and
blue to $A \cup B$. Otherwise suppose that there is a vertex, say $u
\in C$, with $3$ incident edges in either red or blue, say in red,
to $A \cup B$. Then these $3$ edges along with $H_1$ yields a red
copy of $F_3$ centered at $u$, a contradiction. This means that
$|A|+|B|\leq 4$, and symmetrically, that $|C|\leq 4$. Then
$|H_1|+|H_2|+|H_3|\geq 25$. Since $|H_2|+|H_3| \leq 16$, we have
$|H_1|\geq 9$. If $|A|+|B|+|C|\geq 3$, then by the same arguments
used above, the subgraphs of $H_{1}$ induced by red and blue edges
must be subgraphs of $C_5$ so by Claim~\ref{Claim:F1}, there exists
a monochromatic copy of $F_3$. On the other hand, if
$|A|+|B|+|C|\leq 2$, then $|H_1|\geq 15$. Since the subgraphs of
$H_{1}$ induced by red edges and blue edges each contain no
monochromatic copy $2K_2$, by deleting at most $8$ vertices from
$H_1$ to remove all red and blue edges, we obtain a monochromatic
$K_7$ and so a monochromatic copy of $F_{3}$, a contradiction.

We may therefore suppose $C=\emptyset$. By minimality of $t$, we
also have $A \neq \emptyset$ and $B \neq \emptyset$. We first prove
a claim.

\begin{claim}\labelz{Claim:2}
$|A|\leq 5$ and $|B|\leq 5$.
\end{claim}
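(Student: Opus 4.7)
The plan is to prove $|A|\leq 5$, since $|B|\leq 5$ then follows by the symmetry that swaps the roles of $(A,H_3)$ and $(B,H_2)$ together with red/blue. Assume for contradiction that $|A|\geq 6$. Recall that every vertex of $A$ sees all of $H_3$ (and all of $H_1$) in red and all of $H_2$ in blue, with $|H_2|,|H_3|\geq 3$.

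The first step is to establish two strong restrictions on the monochromatic subgraphs inside $A$, in the spirit of Claim~\ref{Clm:C5} and Fact~\ref{Fact:3K2}. If some $v\in A$ had three red neighbors $x,y,z$ inside $A$, then picking any three distinct $h_1,h_2,h_3\in H_3$ would yield three disjoint red edges $xh_1,yh_2,zh_3$ with $v$ red-adjacent to all six endpoints, giving a red $F_3$ centered at $v$; and any three disjoint red edges inside $A$ combine with any vertex of $H_3$ to give a red $F_3$ as well. So the red subgraph of $A$ has maximum degree at most $2$ and no $3K_2$, hence is a subgraph of $C_4$, $C_5$, or $2K_3$ and has at most $6$ edges. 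The identical argument with $H_2$ in place of $H_3$ gives the analogous bound for the blue subgraph of $A$.

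Next I would rule out singleton parts of $A$. If $v\in A$ lies in a part of size $1$, all of its $|A|-1\geq 5$ neighbors inside $A$ lie in other Gallai parts, so each such edge is red or blue; but the maximum-degree bounds allow at most $2+2=4$ such neighbors, a contradiction. Hence every part of $A$ has size exactly $2$. Writing $a$ for the number of parts, the number of between-part edges inside $A$ is $\binom{2a}{2}-a=2a(a-1)$, and each such edge is red or blue. The two edge bounds of $6$ per color then force $2a(a-1)\leq 12$, hence $a=3$, $|A|=6$, and red and blue contribute exactly $6$ between-part edges each.

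The remaining case $|A|=6$ with three size-$2$ parts I would settle by a rigidity argument. A graph on $6$ vertices with $6$ edges, maximum degree $\leq 2$, and no $3K_2$ must be $2K_3$; and since the red total of $6$ equals the between-part allotment, every red edge is between distinct parts, so each of the two red triangles must use exactly one vertex from each of the three parts. A direct check (unique up to relabeling) shows that the six leftover between-part edges then form a $6$-cycle through the parts in an alternating fashion, and such a $C_6$ contains $3K_2$, contradicting the blue matching bound on $A$. This finishes $|A|\leq 5$, and $|B|\leq 5$ follows by symmetry. The main obstacle is the final rigidity step for $|A|=6$; everything else is a clean application of the Gallai-partition toolkit already in use.
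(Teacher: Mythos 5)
Your proof is correct, and its engine is the same as the paper's: the bound of at most two red and two blue neighbours inside $A$ for each vertex of $A$ (obtained by hanging a red $F_3$ off the red edges to $H_1$ or $H_3$, or a blue $F_3$ off the blue edges to $H_2$), which immediately kills $|A|\ge 7$ and any singleton part, leaving three parts of order $2$. Where you diverge is the endgame. The paper dispatches the three-doubleton case in one line using the Gallai property: the reduced triangle on the three parts of $A$ is $2$-coloured, so some part has the same colour on its edges to both other parts, and any vertex of that part then has four neighbours in one colour, violating the degree bound. You instead add the no-$3K_2$ bound per colour, count $2a(a-1)\le 12$ to force $a=3$ with both colours saturated at six between-part edges, and then run a rigidity argument (red must be a transversal $2K_3$, so blue is $K_{3,3}$ minus a perfect matching, i.e.\ a $C_6$ containing $3K_2$). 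Every step of this checks out, but it is doing more work than necessary, and it quietly ignores the fact that between two Gallai parts all four edges share a colour --- which both gives the paper's one-line finish and shows directly that the red between-part count lies in $\{0,4,8,12\}$ and so can never equal $6$, making your entire rigidity analysis a contradiction derivable in one sentence. What your route buys is that it never invokes the block structure of the Gallai partition inside $A$, so it would survive in a setting where $A$ is merely a $2$-coloured set with the stated degree and matching bounds; what it costs is length and the risk inherent in the case analysis you flag as the main obstacle.
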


\begin{proof}
We focus on $A$ but the same argument holds for $B$. Suppose, for a
contradiction, that $|A| \geq 6$ and let $v$ be a vertex in a
smallest part $H_{A}^{0}$ within $A$. Since $A$ consists only of
parts of order $1$ or $2$, there are at least $3$ parts within $A$.
If $|A| \geq 7$, then there are at least $5$ vertices in $A
\setminus H_{A}^{0}$ so $v$ has at least $3$ incident edges in
either red or blue. Then using the red or blue edges to $H_{1}$ or
respectively to $H_{2}$, $v$ is the center of a red or blue copy of
$F_{3}$. This means that we may assume that $|A| = 6$ and that $A$
consists of exactly $3$ parts each of order $2$. At least one of
these three parts, say $H_{A}^{1}$, has all one color, red or blue,
to the other two parts by the definition of the Gallai partition.
Then for any vertex $v \in H_{A}^{1}$, using the red or blue edges
to $H_{1}$ or respectively to $H_{2}$, $v$ is again the center of a
red or blue copy of $F_{3}$, for a contradiction.
\end{proof}

Next suppose $|A \cup B|\leq 4$. Since $n=33$, we have
$|H_1|+|H_2|+|H_3|\geq 29$. Additionally since $|H_2|+|H_3|\leq 16$,
we also have $|H_1|\geq 13$. Since $A \cup B \neq \emptyset$, by
Fact~\ref{Fact:3K2}, the subgraph of $H_{1}$ containing precisely
the red edges contains no $3K_2$ and by Claim~\ref{Clm:C5}, the
subgraph of $H_{1}$ containing precisely the blue edges has order at
most $5$ and maximum degree at most $2$. We can therefore delete at
most $4$ vertices from $H_{1}$ such that what remains of $H_1$
contains no red edges. By Claim~\ref{Claim:F1}, the resulting
$2$-colored copy of $K_{9}$ must contain a monochromatic $F_3$, a
contradiction.

Thus we may assume that $|A \cup B|\geq 5$. Then at least one of $A$
or $B$ has order at least $3$, say $|A|\geq 3$. To avoid a red or
blue copy of $F_3$, by Claim~\ref{Clm:C5}, for each $i$ with $1\leq
i\leq 3$, the subgraph of $H_i$ containing precisely the red (or
similarly blue) edges is a subgraph of $C_{4}$, $C_{5}$, or
$2K_{3}$. By Claim~\ref{Claim:F1}, we know that $|H_i|\leq 8$ so
$|H_1|+|H_2|+|H_3|\leq 24$, meaning that $|A \cup B| \geq 9$ so one
of $A$ or $B$ has order $5$. On the other hand, since $n = 33$ and
$|A \cup B| \leq 10$, we must have $|H_{1} \cup H_{2} \cup H_{3}|
\geq 23$ so $7 \leq |H_{i}| \leq 8$ for all $i$. Without loss of
generality, suppose $|A| = 5$ so $A$ consists of at least $3$ parts
of the Gallai partition, each of order at most $2$. By
Claim~\ref{Clm:C5}, there is no vertex in $A$ with red or blue
degree at least $3$ so the only possible configuration is for $A$ to
be the unique $2$-colored $K_{5}$ with no monochromatic $K_{3}$
using red and blue. To avoid creating a blue copy of $F_{3}$
centered in $H_{2}$, $H_{1}$ must have no blue edges and by
Claim~\ref{Clm:C5}, any red edges in $H_{1}$ must be a subgraph of
$C_{4}$, $C_{5}$, or $2K_{2}$. Since $H_{1}$ is a $2$-colored
complete graph of order at least $7$ where the subgraph containing
precisely the red edges is a subgraph of $C_{4}$, $C_{5}$, or
$2K_{3}$, we see that $H_{1}$ contains a green copy of $F_{3}$,
completing the proof of Case~\ref{Case:F3-2}.

\begin{case}\labelz{Case:F3-3}
$r = 4$.
\end{case}

For the proof of this case, we disregard the relative ordersof the
parts $|H_{i}|$ for $i \leq 4$. In order to avoid a monochromatic
triangle within the reduced graph restricted to the $4$ largest
parts, up to symmetry, we may assume that either \be
\item all edges from $H_{1} \cup H_{2}$ to $H_{3} \cup H_{4}$ are red with all remaining edges between the parts being blue, or
\item all edges from $H_{i}$ to $H_{i + 1}$ are red for $1 \leq i \leq 3$ and all remaining edges between the parts are blue.
\ee In either coloring, by Claims~\ref{Clm:C5} and~\ref{Claim:F1},
we have $|H_{i}| \leq 8$ for all $i$. Let $A$ be the set of vertices
outside $\cup_{i} H_{i}$.

For the first coloring, any vertex of $A$ must form a monochromatic
triangle with at least one pair of parts $H_{i}$ and $H_{j}$,
producing a monochromatic copy of $F_{3}$, so $A$ must be empty.
Then
$$
|G| = \sum_{i = 1}^{4} |H_{i}| \leq 4 \cdot 8 = 32,
$$
a contradiction.

For the second coloring, every vertex of $A$ must have red edges to
$H_{1}$ and $H_{4}$ and blue edges to $H_{2}$ and $H_{3}$. By
minimality of $t$, the set $A$ must be a single part of the Gallai
partition with $|A| \leq 2$ and by the same calculation as above, $A
\neq \emptyset$, meaning that
$$
|A| + \sum_{i = 1}^{4} |H_{i}| \leq 2 + 4 \cdot 8 = 34.
$$
This implies that $1 \leq |A| \leq 2$ and $|H_{i}| = 8$ for all $i$
except at most one, for which $|H_{i}| = 7$.

By Facts~\ref{Fact:Deg2} and~\ref{Fact:2disjoint}, if $H_{1}$
contains two blue edges, then $H_{3} \cup H_{4}$ must contain no
blue edges. Then $H_{3} \cup H_{4}$ is a $2$-colored copy of a
complete graph on at least $15$ vertices, which must contain a
monochromatic copy of $F_{3}$ for a contradiction. We may therefore
assume that $H_{1}$ and similarly $H_{4}$ each contain at most one
blue edge and symmetrically, $H_{2}$ and $H_{3}$ each contain at
most one red edge.

Suppose $H_{1}$ contains a blue edge. If $H_{4}$ also contains a
blue edge, then by Fact~\ref{Fact:2disjoint}, $H_{2}$ and $H_{3}$
each contain at most one blue edge. Then there exist two vertices in
$H_{2} \cup H_{3}$ whose removal yields a $2$-colored complete graph
on at least $15 - 2 = 13$ vertices, which must contain a
monochromatic copy of $F_{3}$, for a contradiction. This means
$H_{4}$ contains no blue edge. Then there exists a vertex in $H_{3}
\cup H_{4}$ (more specifically in $H_{3}$) whose removal yields a
$2$-colored complete graph on at least $15 - 1 = 14$ vertices, which
again contains a monochromatic copy of $F_{3}$. This means that
$H_{1}$, and similarly $H_{4}$, contains no blue edges and
symmetrically, $H_{2}$ and $H_{3}$ each contain no red edges.

Finally since $H_{1}$ contains no blue edges and, by
Claim~\ref{Clm:C5}, the subgraph of $H_{1}$ containing precisely the
red edges is contained in $C_{4}$, $C_{5}$, or $2K_{3}$, $H_{1}$
contains a green copy of $F_{3}$, a contradiction to complete the
proof of Case~\ref{Case:F3-3}.

\begin{case}\labelz{Case:F3-4}
$r = 5$.
\end{case}

Certainly $t = 5$ and to avoid a monochromatic triangle in the
reduced graph, and the reduced graph must be the unique $2$-colored
$K_{5}$ consisting of a blue cycle say
$H_{1}H_{2}H_{3}H_{4}H_{5}H_{1}$ and a complementary red cycle. By
Claims~\ref{Clm:C5} and~\ref{Claim:F1}, we have $|H_{i}| \leq 8$ for
all $i$. If $H_{1}$ contains a blue edge, then $H_{2} \cup H_{5}$
contains at most one blue edge by Facts~\ref{Fact:Deg2}
and~\ref{Fact:2disjoint}. Then by deleting one vertex from $H_{2}
\cup H_{5}$, we can obtain a $2$-colored complete graph, meaning
that $|H_{2} \cup H_{5}| \leq 13$ to avoid making a monochromatic
copy of $F_{3}$. By symmetry, this same fact holds for other parts
and for red as well.

Suppose first that $H_{1}$ contains at least one edge in both red
and blue. Then $H_{2} \cup H_{5}$ contains at most one blue edge and
$|H_{2} \cup H_{5}| \leq 13$. Similarly $H_{3} \cup H_{4}$ contains
at most one red edge and $|H_{3} \cup H_{4}|$ so $|G| = \sum |H_{i}|
\leq 8 + 2\cdot 13 = 34$. By Claim~\ref{Clm:C5}, the subgraph of
$H_{i}$ consisting of the red (respectively blue) edges is a
subgraph of $C_{4}$, $C_{5}$, or $2K_{3}$ for all $i$ with $1 \leq i
\leq 5$.

If $H_1$ contains both a red $2K_{2}$ and a blue $2K_{2}$, then each
$H_i$ (for $2 \leq i \leq 5$) is missing either red or blue edges,
so by Fact~\ref{Fact:K7}, $|H_i| \leq 6$. Then
$|G|=|H_{1}|+\sum_{i=2}^5|H_{i}|=8+6\cdot 4=32$, a contradiction. We
may therefore assume that no part $H_{i}$ contains both a red
$2K_{2}$ and a blue $2K_{2}$. This means that for every part
$H_{i}$, either the red or the blue edges are a subgraph of $P_{3}$.
Since every $3$-coloring of $K_{8}$ in which the subgraph containing
the edges of one color is a subgraph of $P_{3}$ and the subgraph
containing edges of a second color is a subgraph of $C_{4}$,
$C_{5}$, or $2K_{3}$ must contain a monochromatic copy of $F_{3}$,
this means that $|H_{i}| \leq 7$ for all $i$.

Then we get the following claim.

\begin{claim}\labelz{Claim:7}
We have $|H_{i}| = 7$ for at most $2$ values of $i$ with $1 \leq i \leq 5$.
\end{claim}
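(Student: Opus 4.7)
My plan is to assume three parts have size $7$ and derive a contradiction by forcing a monochromatic $F_3$ in the third color inside one of them. By the cyclic symmetry of the reduced graph --- the unique $2$-coloring of $K_5$ whose red and blue subgraphs are complementary $5$-cycles --- I may take the three size-$7$ parts to be $H_1, H_2, H_3$ with $H_1H_2$ and $H_2H_3$ blue and $H_1H_3$ red. Throughout, write $r_i$ and $b_i$ for the numbers of interior red and blue edges of $H_i$, and call the third color (which is well defined since $k = 3$) green.

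I will need two auxiliary sub-claims. The first is a short Tur\'an-type bound: any $7$-vertex graph with at least $18$ edges contains a copy of $F_3$. Its complement has at most three edges, covering at most six vertices, so some vertex $v$ has degree $6$; the subgraph on its six neighbors is $K_6$ minus at most three edges and hence has at least $12$ edges, so it contains a perfect matching (the Erd\H{o}s--Gallai extremal $6$-vertex graph without a perfect matching is $K_5$ with an isolated vertex, having only $10$ edges), which together with $v$ yields an $F_3$. The second sub-claim is that for every $i$ with $|H_i| = 7$ one has $r_i \geq 1$ and $b_i \geq 1$. To see $r_i \geq 1$, suppose $r_i = 0$; since one of the red/blue interiors is a subgraph of $P_3$ and the other a subgraph of $C_4$, $C_5$, or $2K_3$, the blue interior has at most $6$ edges, so the green interior has at least $15$ edges. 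If $b_i \leq 3$ the Tur\'an sub-claim gives a green $F_3$ at once; if $b_i \in \{4,5,6\}$, I enumerate the admissible blue shapes ($C_4$, $K_3 + K_2$, $P_5$, $C_5$, $K_3 + P_3$, $2K_3$) and in each case pick a vertex $v$ not covered by blue edges (such a vertex always exists, since each admissible shape covers at most six vertices) and produce a $3K_2$ inside its green neighborhood. The case $b_i = 0$ is symmetric.

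With both sub-claims in hand, Facts~\ref{Fact:Deg2} and~\ref{Fact:2disjoint} propagate the constraints across the three size-$7$ parts. Since $H_1$ and $H_3$ are red-adjacent parts of size $7$ with $r_1, r_3 \geq 1$, neither may have $r \geq 2$: any two edges in its red interior either share a vertex (a $P_3$) or are disjoint (a $2K_2$), and in either case the Facts force the red interior of the other part to be empty, contradicting the second sub-claim. Hence $r_1 = r_3 = 1$, and the same reasoning applied to the blue-adjacent pairs $(H_1, H_2)$ and $(H_2, H_3)$ gives $b_1 = b_2 = b_3 = 1$. In particular $r_1 + b_1 = 2$, so the green interior of $H_1$ has $21 - 2 = 19 \geq 18$ edges and contains a green $F_3$ by the Tur\'an sub-claim, contradicting the standing assumption that $G$ has no monochromatic $F_3$. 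The main obstacle I expect is the enumeration in the second sub-claim; each individual case is a short matching argument, but the enumeration must be done carefully. The propagation step and the $18$-edge bound are then brief.
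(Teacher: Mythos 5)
Your argument is correct, but it reaches the contradiction by a genuinely different mechanism from the paper's. The paper's proof is two lines: by Fact~\ref{Fact:K7}, every part of order $7$ contains at least one red and at least one blue edge; merging the two parts of the triple that are joined to the third by a common color then produces two disjoint edges of that color inside the merged part, and Fact~\ref{Fact:2disjoint} forces a monochromatic $F_{3}$ in red or blue, spanning several parts. You instead stay inside a single part: you pin the interior counts down to $r_{1}=b_{1}=1$ exactly (the pairwise propagation via Facts~\ref{Fact:Deg2} and~\ref{Fact:2disjoint} is sound) and then extract a green $F_{3}$ from the $21-2=19$ green edges of $H_{1}$ via your $18$-edge Tur\'an-plus-matching bound. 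Your route is longer but more self-contained: your second sub-claim is in effect a proof of Fact~\ref{Fact:K7}, which the paper only asserts, and the $18$-edge bound cleanly quantifies why a size-$7$ part with only two non-green interior edges is fatal. Two small repairs are needed. First, cyclic (even dihedral) symmetry does not carry the non-consecutive triple $\{H_{1},H_{2},H_{4}\}$ to the consecutive one; the reduction requires the color-swapping automorphism of the $2$-colored $K_{5}$ --- or, more simply, just observe that your argument is symmetric in red and blue, so the other configuration (one blue-adjacent pair, two red-adjacent pairs) yields $b_{1}=b_{2}=1$ and $r_{1}=r_{2}=r_{4}=1$ and the same $19$-edge count in $H_{1}$. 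Second, your list of admissible $4$--$6$-edge blue shapes omits $P_{3}+P_{3}$ (a $4$-edge subgraph of $2K_{3}$); it is disposed of exactly like the others, since in every case the green graph on the six neighbours of the uncovered vertex is $K_{6}$ minus a graph of maximum degree at most $2$, hence has minimum degree at least $3$ and a perfect matching by Dirac's theorem. Neither point is a gap in the underlying idea.
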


\begin{proof}
Assume, to the contrary, that there are three such values of $i$. In
particular, note that by Fact~\ref{Fact:K7}, each such part $H_{i}$
must contain at least one red and at least one blue edge. Up to
symmetry, there are two possible cases: \bd
\item{\rm (i)} $|H_{1}|=|H_{2}|=|H_{3}|=7$, and
\item{\rm (ii)} $|H_{1}|=|H_{2}|=|H_{4}|=7$.
\ed

First suppose $|H_{1}|=|H_{2}|=|H_{3}|=7$. Then considering $H_{1}$
and $H_{3}$ as one part with all blue edges to $H_{2}$,
Fact~\ref{Fact:2disjoint} yields a monochromatic (blue) copy of
$F_{3}$.

Thus suppose $|H_{1}|=|H_{2}|=|H_{4}|=7$. Then considering $H_{1}$
and $H_{2}$ as one part with all red edges to $H_{4}$,
Fact~\ref{Fact:2disjoint} again yields a monochromatic (red) copy of
$F_{3}$.
\end{proof}

From Claim~\ref{Claim:7}, we have $|G| \leq 3\cdot 6 + 2\cdot 7 =
32$, a contradiction, completing the proof of Case~\ref{Case:F3-4}
and the case when $k = 3$.

Before getting into the case where $k \geq 4$, we prove a useful claim.

\begin{claim}\labelz{Claim:8}
In any Gallai $4$-colored copy of $K_{33}$ using colors $1,2,3,4$ in which the subgraph containing precisely those edges of color $4$ is a subgraph of $K_{3}$, there is a monochromatic copy of $F_3$.
\end{claim}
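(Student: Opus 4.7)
The approach is to mirror the Gallai-partition case analysis used for $k=3$ in the proof of Lemma~\ref{Lem:F3}, exploiting the fact that color $4$ is severely restricted: it spans at most $3$ vertices. I would consider a Gallai partition of $G = K_{33}$ with the number of parts $t$ minimized, say with parts $H_1, \ldots, H_t$ and $|H_1| \geq \cdots \geq |H_t|$, and split into two scenarios according to whether color $4$ is a reduced-graph color.

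In the first scenario, color $4$ is not one of the two reduced-graph colors, so the between-part structure uses only colors in $\{1,2,3\}$ and color $4$ appears only inside (at most one) part. I would then run through the cases $r = 0, 1, 2, 3, 4, 5$ on the number $r$ of parts of size at least $3$ exactly as in Cases~\ref{Case:F3-1}--\ref{Case:F3-4} of the $k=3$ argument, invoking Claims~\ref{Clm:C5} and~\ref{Claim:F1} and Facts~\ref{Fact:3K2}, \ref{Fact:Deg2}, \ref{Fact:2disjoint}, and~\ref{Fact:K7}. In every case the bound obtained from the three-color analysis is at most $32$, so the extra vertex available in $K_{33}$ produces a contradiction. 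Any color-$4$ edges sitting inside a part only further constrain that part (they cannot combine with between-part colors without creating a rainbow triangle), so the counts from the $k = 3$ proof carry through essentially unchanged.

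In the second scenario, color $4$ is one of the two reduced-graph colors. Since all color-$4$ edges in $G$ span at most $3$ vertices in total, whenever color $4$ appears on a reduced edge between parts $P$ and $Q$ every $G$-edge between $P$ and $Q$ must be color $4$, forcing $|P|\cdot|Q| \leq 3$. A short analysis shows that the parts touching any color-$4$ reduced edge are singletons or a single size-$2$ part paired with a singleton, and are contained in the set of at most $3$ color-$4$ vertices. Removing these few vertices leaves a Gallai partition of a graph on at least $30$ vertices in which every remaining between-part edge is in the other reduced-graph color, say color $3$. An application of Lemma~\ref{Lemma:MonoSmallParts} (together with bounds on the part sizes obtained as in the first scenario) then produces a color-$3$ copy of $F_3$.

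The main obstacle will be the $r = 5$ subcase of the first scenario, which is the tight case for the three-color bound $gr_3(K_3 : F_3) = 33$: the extremal construction $G_3$ from Lemma~\ref{Lem:F3Low} has exactly $32$ vertices, so only a single vertex of slack exists. I expect to need a careful check, using Facts~\ref{Fact:Deg2} and~\ref{Fact:2disjoint} together with the rigidity of the unique $2$-coloring of $K_5$ without a monochromatic triangle, that any attempt to insert a color-$4$ subgraph of $K_3$ into an otherwise near-extremal configuration on $33$ vertices either violates the Gallai condition or completes a monochromatic $F_3$ in one of colors $1$, $2$, or $3$.
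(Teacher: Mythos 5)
Your high-level skeleton matches the paper's: take a Gallai partition with $t$ minimized, case on the number $r$ of large parts, and lean on the fact that color $4$ occupies at most $3$ vertices (so, when it is not a reduced color, it lives inside a single part). But the load-bearing step of your first scenario --- ``the counts from the $k=3$ proof carry through essentially unchanged'' --- is exactly what fails, and it is why the paper writes out a full separate case analysis for Claim~\ref{Claim:8} rather than citing the $k=3$ argument. The auxiliary tools of the $k=3$ proof (the appeal to $R(F_3,F_3)=13$ after deleting the reduced colors, Claim~\ref{Claim:F1}, Fact~\ref{Fact:K7}) all assume that once the two reduced colors are stripped from a part, what remains is $1$- or $2$-colored; here the part containing color $4$ remains $2$-colored with an extra triangle's worth of edges hidden in it, so those bounds do not apply to that part as stated. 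Concretely: in the $t=2$ case the $k=3$ proof deletes $4$ vertices to reach a $2$-colored $K_{13}$, but here the result is a $3$-colored $K_{13}$ (colors $2,3,4$) and $R(F_3,F_3)$ says nothing; in the $r=1$ case deleting $8$ vertices no longer yields a monochromatic $K_8$; and in the tight case $r=5$, where the $k=3$ bound is only $32<33$, the part hosting the color-$4$ triangle escapes the Claim~\ref{Claim:F1} bound $|H_i|\le 8$, so the one vertex of slack is not obviously enough. The paper's proof handles each of these with new arguments (e.g.\ the counting of $2K_2$'s in $A$ when $r=1$, and the trick of merging two single-edge color classes and applying Fact~\ref{Fact:K7} when $r=5$ or $r=3$). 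You flag $r=5$ as needing ``a careful check'' but that check is the content of the proof, not a detail; several of the other cases need comparable repair.

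Two smaller points. First, in your second scenario the appeal to Lemma~\ref{Lemma:MonoSmallParts} is misapplied: that lemma requires \emph{all} parts of the partition to have order at most $n-1=2$, whereas after deleting the few color-$4$ vertices you may well be left with one huge part; you would instead need the Fact~\ref{Fact:3K2} deletion argument followed by $R(F_3,F_3)=13$. (To your credit, you at least address the possibility that color $4$ is a reduced color, which the paper's proof passes over silently.) Second, in your first scenario the two reduced colors need not be $\{1,2\}$; if color $2$, say, is not a reduced color, then two non-reduced colors coexist inside the parts and the target color for the final monochromatic $F_3$ must be chosen with care --- another place where ``run the $k=3$ cases verbatim'' does not parse. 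As written, the proposal is a plausible plan whose decisive quantitative claim is unverified and, in the cases that matter, false without additional argument.
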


\begin{proof}
Since $G$ is a Gallai coloring, it follows from Theorem~\ref{Thm:G-Part} there is a Gallai partition of $V(G)$. Suppose colors $1$ and $2$ are the two colors appearing on edges between parts in the Gallai partition. Let $t$ be the number of parts in the partition and choose such a partition where $t$ is minimized. Since $R(F_3,F_3)=13$, the reduced graph must have at most $12$ vertices so $t \leq 12$. Let $r$ be the number of ``large'' parts of the Gallai partition with order at least $3$. Let $H_{i}$ be the parts of this Gallai partition and, without loss of generality, suppose that $|H_{i}| \geq |H_{i + 1}|$ for all $i$. This means that $|H_{r}| \geq 3$ and $|H_{r + 1}| \leq 2$.

If $2\leq t\leq 3$, then by the minimality of $t$, we may assume $t=2$, say with corresponding parts $H_1$ and $H_2$. Without loss of generality, suppose all edges between $H_1$ and $H_2$ are color $1$. Since $n = 33$, we must have $|H_1| \geq 17$. If $|H_2| \geq 13$, then $H_2$ contains at least one edge that is not color $2$ or $3$, so either an edge with color $1$ or a subgraph of a triangle with color $4$. If $H_2$ contains an edge with color $1$, then $H_1$ contains a subgraph of a triangle with color $4$ and does not contain a copy of $2K_2$ with color $1$. Then by deleting at most $4$ vertices, we can remove all edges of color $1$ and $4$ from $H_{1}$, leaving behind a $2$-colored subgraph of order at least $13$, and hence there is a monochromatic $F_3$. If $H_2$ contains at least one edge with color $4$, then $H_{1}$ contains no edge of color $4$ and no copy of $3K_{2}$ in color $1$. Then, by deleting at most $4$ vertices from $H_{1}$, we can obtain a subgraph of order at least $13$ containing no edges of color $1$ or $4$, and hence there is a monochromatic $F_3$. If $|H_2|\leq 12$, then $|H_1| \geq 21$. Since $H_{2} \neq \emptyset$, $H_1$ does not contain a copy of $3K_2$ with color $1$ and also $H_1$ contains at most a triangle with color $4$. Thus, by removing at most $6$ vertices, we can produce a subgraph of $H_{1}$ of order at least $15$ with no edges of $1$ and $4$, and hence there is a monochromatic $F_3$. We may therefore assume that $t \geq 4$.

If $r\geq 5$ and $t\geq 6$, then any choice of $6$ parts containing
the $5$ parts $\mathscr {H}=\{H_1,\ldots,H_5\}$ will contain a
monochromatic triangle in the corresponding reduced graph. Such a
triangle must contain at least $2$ parts from $\mathscr {H}$. The
corresponding subgraph of $G$ must therefore contain a monochromatic
copy of $F_3$, a contradiction. Thus, we may assume that either
$4\leq t\leq 5$ or $r\leq 4$.

If $r=0$, then there are at least $17$ small parts. Since
$R(F_3,F_3)=13$, it follows that there is a monochromatic $F_3$.

If $r=1$, then let $A$ be the set of parts with edges with color $1$ to $H_1$ and $B$ be the set of parts with edges with color $2$ to $H_1$. Without loss of generality, suppose $|A| \geq |B|$. If $|A|\geq 11$, then $A$ contains no copy of $3K_2$ in color $2$ so by deleting at most $4$ vertices, we can obtain a subgraph of order at least $7$ within $A$ in which there is no edge with color $2$. Since all parts within $A$ have order at most $2$ and the edges in between the parts in $A$ all have color $3$, it follows that there is a monochromatic copy of $F_3$ in color $3$ within $A$. This means that $|B| \leq |A|\leq 10$ and so $|H_1|\geq 13$. By Fact~\ref{Fact:3K2}, there is no copy of $3K_{2}$ in either color $1$ or color $2$ within $H_{1}$. This means that if $|H_{1}| \geq 15$, then removing $8$ vertices from $H_{1}$ to destroy all edges of colors $1$ and $2$ would yield a subgraph colored entirely in color $3$ except for a subgraph of a triangle colored in color $4$, clearly containing a copy of $F_{3}$. This means that $|H_{1}| \leq 14$ so $|A| = 10$ and $9 \leq |B| \leq 10$. Furthermore, if we could remove all edges of colors $1$ and $2$ from $H_{1}$ by deleting at most $6$ vertices, what remains would easily contain a copy of $F_{3}$ in color $3$, we must remove at least $7$ vertices from $H_{1}$ to destroy all edges of colors $1$ and $2$ from $H_{1}$.  There must therefore be a copy of $2K_{2}$ in one of color $1$ or $2$ and at least an edge in the other color within $H_{1}$. In particular, there is at least one edge in color $1$ within $H_{1}$. This means that there can be at most one edge in color $1$ within $A$. Since $A$ is made up of parts of the Gallai partition of order at most $2$, this means that the edges of color $2$ within $A$ form a complete graph minus a matching and with $|A| = 10$, there is a copy of $F_{3}$ in color $2$ within $A$.


If $r=5$, then $t=5$ and to avoid a monochromatic triangle in the reduced graph, and the reduced graph must be the unique $2$-colored $K_{5}$ consisting of a cycle say $H_{1}H_{2}H_{3}H_{4}H_{5}H_{1}$ with color $1$ and a complementary cycle with color $2$. Without loss of generality, suppose that if there is any edge of color $4$, it appears within $H_{1}$, meaning that $H_{i}$ contains no edge of color $4$ for $2 \leq i \leq 5$. By Claim~\ref{Clm:C5}, within each part $H_{i}$ the edges of colors $1$ and $2$ are each subgraphs of either $C_{4}$, $C_{5}$, or $2K_{3}$. By Claim~\ref{Claim:F1}, we have $|H_{i}|\leq 8$ for $2 \leq i \leq 5$. Since $n = 33$, either there are three parts of order at least $7$ or one part of order $8$ and another part of order at least $7$. First suppose there is a part of order $8$, say $H_{1}$, and another part of order at least $7$, say $H_{2}$. By Fact~\ref{Fact:K7}, there can be no vertex $v \in H_{1}$ such that $H_{1} \setminus \{v\}$ has no edges of color $i$ where $i$ is either of $1$ or $2$. This means that $H_{1}$ contains either a triangle or a copy of $2K_{2}$ in each of colors $1$ and $2$. By Facts~\ref{Fact:Deg2} and~\ref{Fact:2disjoint}, $H_{2}$ must have no edges of color $1$, and so by Fact~\ref{Fact:K7}, $H_{2}$ contains a monochromatic copy of $F_{3}$. We may therefore assume there is no part of order $8$, so there are at least $3$ parts of order $7$. Finally suppose three parts have order $7$, say $|H_{1}| = |H_{2}| = |H_{3}| = 7$. By Fact~\ref{Fact:K7}, each part $H_{i}$ contains at least one edge of color $1$ and one edge of color $2$. By Facts~\ref{Fact:Deg2} and~\ref{Fact:2disjoint}, since $H_{2}$ has at least one edge in color $1$, each of $H_{1}$ and $H_{3}$ must have at most one edge in color $1$, meaning that they each have exactly one edge in color $1$. Similarly, since each of $H_{1}$ and $H_{3}$ has at least one edge of color $2$, they must each also have at most one edge of color $2$, so $H_{1}$ and $H_{3}$ must each have exactly one edge of color $1$ and one edge of color $2$. Merging these two edges into a single color and applying Fact~\ref{Fact:K7}, we obtain a monochromatic copy of $F_{3}$ for a contradiction.

If $r=4$, then in order to avoid a monochromatic triangle within the reduced graph restricted to the $4$ largest parts, up to symmetry, we may assume that all edges from $H_{i}$ to $H_{i + 1}$ have color $1$ for $1 \leq i \leq 3$ and all remaining edges between the parts have color $2$. Since $n=33$ and $|H_{i}|\leq 8$ (by Claim~\ref{Claim:F1}) for $1\leq i\leq 4$, it follows that $t\geq 5$. 
Let $A$ be the set of vertices in $G \setminus (H_{1} \cup H_{2} \cup H_{3} \cup H_{4})$. In order to avoid creating a monochromatic triangle in the reduced graph using two large parts, all vertices in $A$ have all edges in color $1$ to $H_{1}$ and $H_{4}$ and all vertices in $A$ have all edges in color $2$ to $H_{2}$ and $H_{3}$. If $|A| \leq 8$, then we may treat $A$ as one part of the Gallai partition and apply the arguments in the above case when $r = 5$. We may therefore assume $|A| \geq 9$. Since $A$ is made up entirely of parts from the Gallai partition of order at most $2$, each vertex has at least $\frac{|A| - 2}{2} \geq 3$ incident edges in either color $1$ or $2$. Then again treating $A$ as a single part of the Gallai partition, Fact~\ref{Fact:Deg2} yields a monochromatic copy of $F_{3}$.

If $r=3$, then we assume that the edges from $H_1$ to $H_2\cup H_3$ have color $1$ and the edges from $H_2$ to $H_3$ have color $2$. By Claim~\ref{Claim:F1}, $|H_{2}|, |H_{3}| \leq 8$. If there is a part $H_{i}$ with $i \geq 4$ with edges of color $1$ to $H_{1}$, then to avoid creating a copy of $F_{3}$ in color $1$, all edges from $H_{i}$ to $H_{2} \cup H_{3}$ must have color $2$, making a copy of $F_{3}$ in color $2$. This means that all parts $H_{i}$ with $i \geq 4$ must have edges of color $2$ to $H_{1}$. If $|H_{2}|\geq 7$ and $|H_{3}|\geq 7$, then by Fact~\ref{Fact:K7}, each of $H_{2}$ and $H_{3}$ contains at least one edge of color $1$ and one edge of color $2$. To avoid creating a copy of $F_{3}$ in color $1$ (centered at a vertex in $H_{1}$), each of $H_{2}$ and $H_{3}$ contains exactly one edge in color $1$. By Fact~\ref{Fact:Deg2}, each of $H_{2}$ and $H_{3}$ contains exactly one edge in color $2$. Then as in the case $r = 5$ above, we may merge colors $1$ anr $2$ into a single color and apply Fact~\ref{Fact:K7} to obtain a monochromatic copy of $F_{3}$. Then $|H_{2}|\leq 6$ or $|H_{3}|\leq 6$ so without loss of generality, suppose $|H_{2}| \leq 6$. Let $A$ be the set of vertices not in large parts, so $A=\{H_i\,|\,4\leq i\leq t\}$. If $|A|\geq 3$, then $|H_{1}|\leq 8$ (by Claim~\ref{Claim:F1}) and so $|A|\geq 11$. By deleting at most $4$ vertices from $A$, we can remove all edges of color $2$, leaving behind a subgraph of $A$ of order at least $7$ consisting of parts of the Gallai partition each of order at most $2$. Since all edges between these parts have color $1$, there is a copy of $F_{3}$ in color $1$. We may therefore assume that $|A| \leq 2$ so $|H_{1}| \geq 17$. Since $H_{1}$ contains no copy of $3K_{2}$ in either color $1$ or color $2$, we may remove at most $8$ vertices from $H_{1}$ to leave behind a subgraph with no edges of colors $1$ or $2$. This is a copy of $K_{9}$ in which color $4$ is a subgraph of a triangle and all remaining edges have color $3$, clearly producing a copy of $F_{3}$ in color $3$.

If $r=2$, then we assume that the edges from $H_1$ to $H_2$ have color $1$. First suppose $|H_{1}| \geq 14$. Then by removing at most $4$ vertices, we obtain a subgraph of $H_{1}$ with no edges of color $2$ and by removing at most an additional $2$ vertices, we obtain a subgraph of $H_{1}$ in which the edges of color $1$ induce a subgraph of $2K_{2}$. In this remaining subgraph of $H_{1}$, a colored copy of $K_{8}$, any edges of color $1$ induce a subgraph of $2K_{2}$ and any edges of color $4$ induce a subgraph of $K_{3}$ and all remaining edges have color $3$, yielding a copy of $F_{3}$ in color $3$. We may therefore assume that $|H_{1}| \leq 13$ and similarly $|H_{2}| \leq 13$. Let $A$ be the set of vertices not in large parts, so $A=\{H_i\,|\,3\leq i\leq t\}$. Note that $|A|\geq 7$ since $n=33$. Let $B$ be the set of vertices in $A$ with edges of color $1$ to $H_{1}$ and let $C$ be the set of vertices in $A$ with edges of color $2$ to $H_{1}$ so $C = A \setminus B$. Note that all edges from $B$ to $H_{2}$ must have color $2$ to avoid creating a copy of $F_{3}$ in color $1$. If $|C|\geq 3$, then by Claim~\ref{Claim:F1}, $|H_{1}|\leq 8$ and similarly if $|B|\geq 3$, then $|H_{2}|\leq 8$. Since $|A| \geq 7$, at least one of $B$ or $C$ has order at least $3$ so at least one of $H_{1}$ or $H_{2}$ has order at most $8$. First suppose $|C| \leq 2$ so $5 \leq |B| \leq 8$, and $3 \leq |H_{2}| \leq 8$. With $|H_{1}| \leq 13$, this means $|G| \leq 31$ which is a contradiction. Next suppose $|B| \leq 2$ so $|C| \geq 5$ and $3 \leq |H_{1}| \leq 8$. With $|H_{2}| \leq 13$, this means $|C| \geq 10$. Furthermore, by removing at most $2$ vertices from $H_{2}$, we obtain a subgraph of $H_{2}$ with no edges of color $4$ and in which colors $1$ and $2$ both satisfy Claim~\ref{Clm:C5}. By Claim~\ref{Claim:F1}, this means that $|H_{2}| \leq 8 + 2 = 10$. This implies that $|C| \geq 13$. By Claim~\ref{Clm:C5}, we may remove at most $4$ vertices from $C$ to obtain a subgraph of $C$ of order at least $9$ with no edges of color $2$. Since $C$ is made up of only parts of the Gallai partition of order at most $2$ and all edges between these parts in the aforementioned subgraph of $C$ have color $2$, this clearly produces a copy of $F_{3}$ in color $2$. We may therefore assume that $|B|, |C| \geq 3$ so $|H_{1}|, |H_{2}| \leq 8$. As in the proof of the case $r = 3$, one of $B$ or $H_{2}$ has at most $6$ vertices so this means $|C| \geq 11$. By Claim~\ref{Clm:C5}, the removal of at most $2$ vertices from $C$ leaves a subgraph in which the edges of color $2$ form a subgraph of $2K_{2}$. This subgraph is a complete graph of order at least $9$ where all except a matching has color $1$, which contains a copy of $F_{3}$ in color $1$, to complete the proof of Claim~\ref{Claim:8}.
\end{proof}

\medskip

For the remainder of this proof, we suppose $k \geq 4$. Since $G$ is
a Gallai coloring, it follows from Theorem \ref{Thm:G-Part} that
there is a Gallai partition of $V(G)$. Suppose that the two colors
appearing on edges in between the parts of the Gallai partition are red and blue. Let $t$ be the
number of parts in this partition and choose such a partition where
$t$ is minimized. Let $H_{1}, H_{2}, \dots, H_{t}$ be the parts of
this partition, say with $|H_{1}| \geq |H_{2}| \geq \dots \geq
|H_{t}|$.

If $2\leq t\leq 3$, then by the minimality of $t$, we may assume that 
$t=2$. Let $H_1$ and $H_2$ be the corresponding parts and suppose
all edges from $H_1$ to $H_2$ are red. If $|H_{2}|\leq 2$, then by
Fact~\ref{Fact:3K2}, $H_1$ contains no red $3K_2$ and by deleting
$4$ vertices, we can obtain a subgraph of $H_{1}$ with no red edges. This means
that
$$
|G| = |H_{1}| + |H_{2}| \leq [g(k - 1) - 1] + 4 + 2 < n,
$$
a contradiction. If $|H_{1}|\geq 3$ (and $|H_{2}|\geq 3$), then by
deleting at most $8$ vertices in total, we can obtain subgraphs of $H_1$ and $H_2$ with no red edges inside. This means that
$$
|G| = |H_{1}| + |H_{2}| \leq 2[g(k - 1) - 1] + 8 < n,
$$
a contradiction 
when $k \geq 5$. Hence, we may assume that $k = 4$ so $n = 69$. By Claim~\ref{Clm:C5}, we can remove at most $4$ vertices from each part $H_{i}$ for $i = 1, 2$ to obtain subgraphs in which there are no red edges. This means that $|H_{i}| \leq [g(3) - 1] + 4 = 36$, implying that $|H_{i}| \geq 69 - 36 = 33$ for $i = 1, 2$. Suppose now that by deleting $q_{i}$ vertices from $H_{i}$, we can obtain a subgraph in which there are no red edges for $i = 1, 2$. If $q_{1} + q_{2} \leq 4$, then
$$
69 = |H_{1}| + |H_{2}| \leq 2 \cdot 32 + 4 = 68 < 69,
$$
a contradiction. Hence, we may assume that $q_{1} + q_{2} \geq 5$, say with $3 \leq q_{1} \leq 4$, also meaning that $1 \leq q_{2} \leq 4$. Hence, $H_{1}$ contains a red copy of $2K_{2}$ and $H_{2}$ contains a red edge. Since $|H_{1}| \geq 33 > 5$, there is a red copy of $F_{3}$ using these red edges and the red edges between $H_{1}$ and $H_{2}$, a contradiction.
We may therefore assume that $t \geq 4$.

Since $R(F_{3}, F_{3})=13$, it follows that $4\leq t\leq 12$. Let
$r$ be the number of ``large'' parts of the Gallai partition with order at
least $3$. As before, we disregard the relative orders of the parts
$H_{i}$ for $i \leq r$. By Fact~\ref{Fact:3K2}, we can remove at
most $8$ vertices from each part $H_{i}$ for $i \leq r$ to obtain subgraphs with no red or blue edges. 
If $0\leq r\leq 4$, then we get
\beqs
|G| & = & \sum_{i=1}^r|H_{i}| + \sum_{i=r+1}^t|H_{i}| \\
~ & \leq & r[g(k-2)-1] + 8r + 2(t-r) \\
~ & < & g(k),
\eeqs
a contradiction when $k \geq 5$ or $k = 4$ and $0 \leq r \leq 2$. Hence, we assume, for a moment, that $k = 4$ and $3 \leq r \leq 4$. 


In order to avoid a monochromatic triangle within the reduced graph restricted to the $r$ large parts, we may assume that each large part is adjacent in red to another large part. By Claim~\ref{Clm:C5}, the red edges in each of these parts form a subgraph of $C_{4}$, $C_{5}$, or $2K_{2}$. By Fact~\ref{Fact:3K2} (and the minimality of $t$ which guarantees that each part has blue edges to some other part), we may remove at most $4$ vertices from each large part to obtain subgraphs with no blue edges. By Claim~\ref{Claim:F1} applied within these subgraphs, we see that $|H_{i}| \leq 8 + 4 = 12$ for each $i$ with $1 \leq i \leq r$. This means that
$$
g(4) = 69 = \sum_{i = 1}^{r} |H_{i}| + \sum_{i = r + 1}^{t} |H_{i}| \leq r\cdot 12 + 2(t - r) \leq 64,
$$
a contradiction.


In order to avoid a monochromatic triangle within the reduced graph restricted to the $r$ large parts, we must have $r \leq 5$. Since the cases with $r \leq 4$ have already been considered, we may therefore assume, for the remainder of the proof, that $r = 5$.

Certainly $t = 5$ and to avoid a monochromatic triangle in the reduced graph restricted to the $5$ large parts, the reduced graph must be the unique $2$-colored copy of $K_{5}$ consisting of a blue cycle say $H_{1}H_{2}H_{3}H_{4}H_{5}H_{1}$ and a complementary red cycle. First some helpful claims.

\begin{claim}\labelz{Claim:Only2K2} 
If one part, say $H_1$, contains a blue (or red) copy of $2K_2$,
then there are no blue (respectively red) edges in $H_2\cup H_3\cup
H_4\cup H_5$.
\end{claim}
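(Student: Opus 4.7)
The plan is to use Fact~\ref{Fact:2disjoint} for two of the four parts and a direct $F_3$ construction for the remaining two. By the symmetry of red and blue in the reduced graph (the unique $2$-coloring of $K_5$ with no monochromatic triangle), it suffices to treat the blue version: assume $H_1$ contains a blue $2K_2$, with disjoint blue edges $ab$ and $cd$, and show that no $H_i$ with $i \geq 2$ contains a blue edge. Up to relabelling, the blue cycle in the reduced graph is $H_1 H_2 H_3 H_4 H_5 H_1$, so $H_1$ is blue-adjacent to $H_2$ and $H_5$ and red-adjacent to $H_3$ and $H_4$.

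First I would dispose of $H_2$ and $H_5$. Since all edges between $H_1$ and $H_2$ are blue and $H_1$ already contains two disjoint blue edges, Fact~\ref{Fact:2disjoint} (applied with the roles of red and blue exchanged) forces $H_2$ to contain no blue edges; the same argument works for $H_5$.

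Next I would handle $H_3$ (and $H_4$ analogously). Suppose for contradiction that $H_3$ contains a blue edge $xy$. Pick any vertex $v \in H_2$, which is non-empty since it is a part of a Gallai partition. Because $H_2$ is blue-adjacent in the reduced graph to both $H_1$ and $H_3$, the blue neighbourhood $N_{\mathrm{blue}}(v)$ contains $H_1 \cup H_3$. The three edges $ab$, $cd$, $xy$ are all blue and lie in $H_1 \cup H_3$; they are pairwise vertex-disjoint because $H_1 \cap H_3 = \emptyset$ and $\{a,b\} \cap \{c,d\} = \emptyset$. Hence $v$ is the centre of a blue copy of $F_3$, contradicting our assumption on $G$. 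The case of $H_4$ is identical, choosing $v \in H_5$, which is blue-adjacent in the reduced graph to both $H_1$ and $H_4$.

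There is no real obstacle; the only point to check is that every $H_i$ is non-empty (so that the required centre vertex $v \in H_2$ or $v \in H_5$ exists), which is automatic for a Gallai partition. The red version then follows by exchanging the two colours and swapping the blue cycle with its complementary red cycle on $\{H_1, \ldots, H_5\}$.
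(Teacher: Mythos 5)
Your proof is correct and follows essentially the same route as the paper: Fact~\ref{Fact:2disjoint} eliminates blue edges from the blue-adjacent parts $H_2$ and $H_5$, and then a blue $3K_2$ spanning $H_1$ and $H_3$ (resp.\ $H_4$) together with a vertex of $H_2$ (resp.\ $H_5$) yields a blue $F_3$ — the paper phrases this last step as merging $H_1\cup H_3$ into one part and invoking Fact~\ref{Fact:3K2}, which is the same construction you spell out explicitly.
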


\begin{proof}
Suppose $H_{1}$ contains a blue copy of $2K_{2}$. By Fact~\ref{Fact:2disjoint}, the parts $H_{2}$ and $H_{5}$ each contain no blue edges. Then treating $H_{1} \cup H_{3}$ (or symmetrically $H_{1} \cup H_{4}$) as one part, if $H_{3}$ (respectively $H_{4}$) contains a blue edge, then $H_{2}$ along with $H_{1} \cup H_{3}$ (respectively $H_{5}$ along with $H_{1} \cup H_{4}$) violates Fact~\ref{Fact:3K2}.
\end{proof}

\begin{claim}\labelz{Claim:4K2}
There is no blue (or red) copy of $4K_{2}$ in $H_{1} \cup H_{2} \cup
H_{3} \cup H_{4} \cup H_{5}$ as a disjoint union of subgraphs, not
including the edges between the parts.
\end{claim}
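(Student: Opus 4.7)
The plan is a proof by contradiction, leveraging Claim~\ref{Claim:Only2K2}. Suppose the blue edges lying inside $H_1 \cup \dots \cup H_5$ contain a $4K_2$, with edges $e_1, \dots, e_4$. If two of these edges lie in the same part, that part contains a blue $2K_2$ and, by Claim~\ref{Claim:Only2K2}, no other part contains a blue edge, which forces all four edges to lie in that single part. Thus we reduce to exactly two cases: (i) all four of $e_1, \dots, e_4$ lie in one common part $H_j$, or (ii) they lie in four distinct parts.

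In case (i), say all four edges lie in $H_1$. Because the blue edges of the reduced graph form the $5$-cycle $H_1 H_2 H_3 H_4 H_5 H_1$, every vertex $v \in H_2$ has all blue edges to $H_1$. Choosing any three of the $e_i$, the vertex $v$ together with these three disjoint blue edges inside $H_1$ forms a blue copy of $F_3$ centered at $v$, a contradiction.

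In case (ii), relabel so that $e_i = a_i b_i \subseteq H_i$ for $i = 1, 2, 3, 4$. I would take $v = a_2$ as the center of the sought blue $F_3$. The vertex $v$ is joined in blue to every vertex of $H_1$ and of $H_3$ (since $H_1 H_2$ and $H_2 H_3$ are blue in the reduced graph), and it is also joined in blue to $b_2$ via the edge $e_2$. Since $r = 5$ gives $|H_1| \geq 3$, there is a vertex $w_1 \in H_1 \setminus \{a_1, b_1\}$; the edge $b_2 w_1$ is blue because it runs between $H_2$ and $H_1$. Then $e_1 = a_1 b_1$, $e_3 = a_3 b_3$, and $b_2 w_1$ are three pairwise disjoint blue edges all six of whose endpoints are blue neighbors of $v$, yielding a blue $F_3$ centered at $v$, again a contradiction. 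The red version is proved identically using the complementary red Hamilton cycle $H_1 H_3 H_5 H_2 H_4 H_1$ in the reduced graph.

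The only subtle point is case (ii): a vertex of $H_5$ is blue-adjacent to only two of the other parts, so centering there yields just a blue $F_2$. The trick is to take the center $v$ to be an \emph{endpoint} of one of the intra-part edges $e_2$, so that the ``extra'' blue neighbor $b_2$ extends the obvious blue $F_2$ built from $e_1$ and $e_3$ into a third triangle via a crossing edge $b_2 w_1$, which requires only $|H_1| \geq 3$ (guaranteed because $r = 5$).
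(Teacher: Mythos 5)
Your proof is correct and follows essentially the same route as the paper: both reduce, via Claim~\ref{Claim:Only2K2} and Fact~\ref{Fact:3K2}, to one edge in each of four distinct parts, then observe that three consecutive parts on the monochromatic $5$-cycle each contain such an edge. Where the paper merges $H_{1}\cup H_{3}$ into one part and invokes Fact~\ref{Fact:2disjoint}, you simply unfold that fact's proof explicitly, centering the $F_{3}$ at an endpoint of the edge in the middle part $H_{2}$ and using $|H_{1}|\geq 3$ to supply the extra vertex $w_{1}$ --- the same argument in expanded form.
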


Before proving this claim, we would like to note that it is possible
for $H_{1} \cup H_{2} \cup H_{3} \cup H_{4} \cup H_{5}$ to contain a
blue (or symmetrically red) $3K_{2}$. Indeed, placing one blue edge
in each of $H_{1}$, $H_{2}$, and $H_{4}$ does not produce a blue
copy of $F_{3}$.

\begin{proof}
If there is a red copy of $4K_{2}$ in $H_{1} \cup H_{2} \cup H_{3} \cup H_{4} \cup H_{5}$ as a disjoint union of subgraphs, then there are only three possible cases that do not immediately violate Fact~\ref{Fact:3K2} or Claim~\ref{Claim:Only2K2}. In each of these cases, there is only one part that does not have a blue edge so there must be three parts in a row, say $H_{1}$, $H_{2}$, and $H_{3}$ that each contain a blue edge. Then by considering $H_{1} \cup H_{3}$ as a single part, this structure violates Fact~\ref{Fact:2disjoint}.
\end{proof}

Finally we claim that the red and blue edges can be completely destroyed from within all parts $H_{i}$ by the removal of a total of at most $8$ vertices.

\begin{claim}\labelz{Claim:Kill8}
There exists a set of at most $4$ vertices $V_{0}$ such that $H_{i}
\setminus (V_{0} \cap H_{i})$ contains no blue (or similarly red)
edges for all $i$ with $1 \leq i \leq 5$.
\end{claim}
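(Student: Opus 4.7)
The plan is to bound the minimum number of vertices needed to destroy all blue edges inside the parts $H_1, \dots, H_5$. By Claim~\ref{Clm:C5}, applied with $H_i$ paired against a blue-cycle-neighbor (both of order $\geq 3$ since $r = 5$), each blue subgraph $B_i$ of $H_i$ is a subgraph of $C_4$, $C_5$, or $2K_3$.

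First I would dispatch the case where some $B_i$ contains a blue $2K_2$. By Claim~\ref{Claim:Only2K2}, every other part is then free of blue edges, and any subgraph of $C_4$, $C_5$, or $2K_3$ has vertex cover at most $4$ (the worst case being $2K_3$), so the claim is immediate.

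Otherwise no $B_i$ contains a blue $2K_2$, which together with the bound on max degree forces each $B_i$ to be empty, a single edge, a $P_3$, or a $K_3$. Let $S = \{i : B_i \neq \emptyset\}$ and $T = \{i : B_i \in \{P_3, K_3\}\}$; a vertex cover uses one vertex per index in $S \setminus T$ and at most two per index in $T$. Claim~\ref{Claim:4K2} yields $|S| \leq 3$, and Fact~\ref{Fact:Deg2} applied to blue (valid because all parts have order $\geq 3$) shows that if $i \in T$ then both blue-cycle-neighbors of $H_i$ lie outside $S$. Hence $T$ is an independent set in the blue $5$-cycle, so $|T| \leq 2$.

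The main obstacle is the case $|T| = 2$: naively the cover could reach $2 + 2 + 1 = 5$ if a third part also contained a blue edge. The resolution is that for any independent pair $\{i, j\}$ in the blue $5$-cycle, the union of the blue-cycle-neighborhoods of $H_i$ and $H_j$ exhausts the three remaining parts, so Fact~\ref{Fact:Deg2} forces $S = T$ and the cover has size at most $2 \cdot 2 = 4$. The remaining subcases ($|T| \leq 1$) give cover at most $2 + (|S| - |T|) \leq 2 + 2 = 4$ directly. The red half of the claim follows by the identical argument applied to the red $5$-cycle.
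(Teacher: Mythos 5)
Your proof is correct and rests on exactly the same ingredients as the paper's: Claim~\ref{Clm:C5} to restrict each blue subgraph, Claim~\ref{Claim:Only2K2} for the $2K_2$ case, Claim~\ref{Claim:4K2} to bound the number of nonempty parts, and Fact~\ref{Fact:Deg2} applied around the blue $5$-cycle to kill neighbors of parts with a blue vertex of degree~$2$. Your $S$/$T$ vertex-cover bookkeeping (with $T$ an independent set in the $5$-cycle, hence $|T|\leq 2$, and $S=T$ when $|T|=2$) is just a more systematic packaging of the paper's case analysis, so there is nothing to flag.
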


\begin{proof}
If all blue edges within parts $H_{i}$ are disjoint, then the claim
follows by Claim~\ref{Claim:4K2} (in fact with only $3$ vertices) so
suppose there is a pair of adjacent blue edges, say in $H_{1}$. Also
if $H_{1}$ contains a blue copy of $2K_{2}$, then the claim follows
from Claim~\ref{Claim:Only2K2} so by Claim~\ref{Clm:C5}, we may
assume that the blue edges in $H_{1}$ are contained in a triangle.
Then by Fact~\ref{Fact:Deg2}, $H_{2}$ and $H_{5}$ contain no blue
edges. Similarly, if $H_{3}$ (or $H_{4}$) contains two adjacent blue
edges, then by Fact~\ref{Fact:Deg2}, $H_{4}$ (respectively $H_{3}$)
contains no blue edges. Otherwise $H_{3}$ and $H_{4}$ each contain
at most one blue edge. In either case, the removal of at most $4$
vertices (two from $H_{1}$ and either two from $H_{3}$ or one from
each of $H_{3}$ and $H_{4}$) destroys all blue edges within the
parts.
\end{proof}

By Claim~\ref{Claim:Kill8}, the removal of at most $8$ vertices
destroys all red and blue edges within the parts $H_{i}$.
This means that, to avoid a monochromatic copy of $F_{3}$, we have
$$
|G| = \sum_{i = 1}^{5} |H_{i}| \leq 5[g(k - 2) - 1] + 8.
$$

If $k$ is even, this means that
\beqs
|G| & \leq & 5[g(k - 2) - 1] + 8\\
~ & = & 5\left[ 14 \times 5^{\frac{k - 4}{2}} - 2\right] + 8\\
~ & = & 14 \times 5^{\frac{k - 2}{2}} - 2\\
~ & < & 14 \times 5^{\frac{k - 2}{2}} - 1 = |G|, \eeqs a
contradiction.

If $k$ is odd and $k\geq 7$, this means that \beqs
|G| & \leq & 5[g(k - 2) - 1] + 8\\
~ & = & 5 \left[ 33\times 5^{\frac{k-5}{2}}+a\times
5^{\frac{k-7}{2}}-a-1 \right] + 8\\
~ & < & 33\times 5^{\frac{k-3}{2}}+a\times 5^{\frac{k-5}{2}}-a =
|G|, \eeqs a contradiction.

We may therefore assume that $k=5$ and so $n = 33 \cdot 5 = 165$, say with red and blue being colors $4$ and $5$ respectively. Then by Claim~\ref{Clm:C5}, the 
subgraphs induced by red or blue within each part $\{H_{1}, H_{2}, H_{3}, H_{4}, H_{5}\}$ are contained in $C_5$ or $C_4$ or $2C_3$.

Without loss of generality, suppose that $|H_{1}|\geq |H_{2}|\geq |H_{3}|\geq |H_{4}|\geq |H_{5}|$. If $|H_{1}|\geq |H_{2}|\geq |H_{3}|\geq 33$, then it follows from Claim~\ref{Claim:8}, that for each $i$ with $i \in \{1,2,3\}$, $H_i$ contains $2K_2$ in either red or blue. Then there is a pair of parts in $\{H_{1}, H_{2}, H_{3}\}$ that violates either Fact~\ref{Fact:2disjoint} or~\ref{Fact:3K2}.

Next suppose that $|H_{2}| \geq 33$ but $|H_{3}|\leq 32$. Then $|H_{1}|\geq 35$ and say red is the color of the edges between $H_{1}$ and $H_{2}$. Since the reduced graph is the unique $2$-coloring of $K_{5}$ with no monochromatic triangle, there is another part, say $H_{3}$, with all blue edges to $H_{1} \cup H_{2}$. By Claim~\ref{Clm:C5}, the blue subgraph of $H_{1} \cup H_{2}$ is contained in $C_{4}$, $C_{5}$, or $2K_{3}$. Similarly using Facts~\ref{Fact:2disjoint} or~\ref{Fact:3K2}, for $i \in \{1, 2\}$, if $H_{i}$ contains a vertex with red degree $2$, then $H_{3 - i}$ contains no red edges. If $H_{2}$ contains either no red edges and a subgraph of $K_{3}$ in blue or no blue edges and a subgraph of $K_{3}$ in red, then we may apply Claim~\ref{Claim:8} within $H_{2}$ to complete the proof. This means that $H_{2}$ contains two independent edges (a copy of $2K_{2}$) that are either both red, or both blue, or one red and one blue. If both edges are red, then $H_{1}$ contains no red edges. If both edges are blue, then $H_{1}$ contains no blue edges. If $H_{2}$ contains at least one red and at least one blue edge, then the red and blue subgraphs of $H_{1}$ are both subgraphs of $K_{3}$. In any of these cases, the removal of at most two vertices from $H_{1}$ leaves behind a subgraph of $H_{1}$ of order at least $33$ in which one of red or blue is absent and the other is a subgraph of $K_{3}$. It is on this subgraph that we may apply Claim~\ref{Claim:8} to complete the proof in this case.

Finally, we may assume that $|H_{2}| \leq 32$ so since $n = 165$, we have $|H_{1}| \geq 37$. Since we have shown that $gr_{3}(K_{3} : F_{3}) = 33$, it is safe to assume that for $2 \leq i \leq 5$, the parts $H_{i}$ all have order exactly $32$ and contain no red and no blue edges, so we assume, for the remainder of the proof, that $|H_{1}| = 37$.

As above, if there is a subgraph of $H_{1}$ of order at least $33$ in which one of red or blue is absent and the other is a subgraph of $K_{3}$, then we may apply Claim~\ref{Claim:8} to complete the proof, so suppose this is not the case. By Claim~\ref{Clm:C5}, the red and blue subgraphs of $H_{1}$ are each a subgraph of $C_{4}$, $C_{5}$, or $2K_{3}$, so the only possible remaining possible cases for red and blue subgraphs of $H_{1}$ are precisely as follows:

\bi
\item a red copy of $C_{5}$ and a blue copy of $C_{5}$,
\item a red copy of $C_{5}$ and a blue copy of $2K_{3}$ (or symmetrically a red copy of $2K_{3}$ and a blue copy of $C_{5}$), or
\item a red copy of $2K_{3}$ and a blue copy of $2K_{3}$ (possibly missing at most one edge from exactly copy).
\ei

Since $H_{1}$ contains no rainbow triangle, Theorem~\ref{Thm:G-Part} gives a partition of $V(H_{1})$, say into parts $X_{1}, X_{2}, \dots, X_{a}$. Suppose green and purple (colors $2$ and $3$) are the colors that appear between parts of this partition and choose such a partition so that $a$ is minimized. Since $R(F_{3}, F_{3}) = 13$, we see that $a \leq 12$. Let $b$ be the number of ``large'' parts $X_{i}$ of order at least $3$ in this partition and call all other parts ``small''.

If $2\leq a\leq 3$, then by the minimality of $a$, we may assume $a=2$. Let $X_1$ and $X_2$ be the two parts of this partition and suppose all edges from $X_1$ to $X_2$ have color $3$. If $|X_{1}|\geq 25$, then by deleting at most $12$ vertices, we can obtain a subgraph of $X_{1}$ in which there are no edges of colors $3,4,5$. This subgraph is a $2$-coloring of a complete graph of order at least $13$, which must contain a monochromatic copy of $F_3$, a contradiction. If $|X_{1}| = 25 - i$ for some $i$ with $1 \leq i \leq 6$, then $|X_{2}| = 12 + i$. In order to avoid creating a monochromatic copy of $F_{3}$, $X_2$ contains at least $i$ disjoint edges with colors in $\{3, 4, 5\}$. By Fact~\ref{Fact:2disjoint} and the assumptions on $H_{1}$, each of these edges in $X_{2}$ precludes one such edge from appearing within $X_{1}$. The result is that we can remove at most $12 - 2i$ vertices from $X_{1}$ to obtain a subgraph of $X_{1}$ in which there are no edges with any color in $\{3, 4, 5\}$. Such a subgraph is a $2$-colored complete graph of order at least $25 - i - (12 - 2i) = 13 + i$, so this subgraph contains a monochromatic copy of $F_{3}$, completing the proof in the case $a \leq 3$.

Therefore, suppose $4\leq a\leq 12$. In order to avoid a monochromatic triangle within the reduced graph restricted to the large parts, we have $b\leq 5$. Since we know that $H_{1}$ satisfies one of the cases listed above concerning the presence of red and blue edges, it is clear that $b \geq 1$. We consider cases based on the value of $b$.

First a small claim that will be used within the cases.

\begin{claim}\labelz{Claim:12}
$|X_{1}| \leq 12$.
\end{claim}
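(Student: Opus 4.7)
The plan is to assume for contradiction that $|X_1| \geq 13$ and derive a monochromatic copy of $F_3$, exploiting the coincidence $|X_1| \geq R(F_3, F_3) = 13$ together with the severe constraints already established on four of the five colors inside $X_1$.

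First I would catalog the constraints on each color inside $X_1$. Since $X_1 \subseteq H_1$, the subgraphs of $X_1$ in colors $4$ (red) and $5$ (blue) are subgraphs of the red and blue subgraphs of $H_1$, which by assumption lie in $C_4$, $C_5$, or $2K_3$; in particular each has matching number at most $2$ in $X_1$. Since $4 \leq a \leq 12$, the minimality of $a$ forces both colors $2$ (green) and $3$ (purple) to appear on edges from $X_1$ to other parts of the Gallai partition of $H_1$, so Fact~\ref{Fact:3K2} applied inside $H_1$ gives that neither the green nor the purple subgraph of $X_1$ contains a copy of $3K_2$, hence each has matching number at most $2$. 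Thus in $X_1$ each of the four colors $2,3,4,5$ admits a vertex cover of size at most $4$.

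Next I would use these constraints to find a large subgraph of $X_1$ in which essentially only color $1$ survives, and then invoke Claim~\ref{Claim:F1} or Fact~\ref{Fact:K7} to produce a monochromatic copy of $F_3$. The key plan is to remove a small set of vertices (chosen from the vertex covers of the restricted colors) leaving a subgraph of $X_1$ on which color $1$ dominates, and then to combine this with the crisp structural descriptions ($C_4$, $C_5$, $2K_3$) of the red and blue subgraphs of $H_1$ so that the leftover subgraph is large enough to apply one of those two tools. The output is either a monochromatic $F_3$ sitting inside $X_1$ directly, or a near-copy of $F_3$ which can be completed using edges from $X_1$ to $X_2, \dots, X_a$ (which exist by the minimality of $a$).

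The main obstacle will be the tightness of the bound $|X_1| \leq 12$: a naive vertex-cover bound of $4 \cdot 4 = 16$ is too weak, so the argument must simultaneously use the exact structure (not merely the matching number) of colors $4$ and $5$, the Fact~\ref{Fact:3K2} restriction on colors $2$ and $3$, and the $2$-color Ramsey threshold $R(F_3, F_3) = 13$. I expect the proof to split into the three sub-cases corresponding to the three listed configurations for the red and blue subgraphs of $H_1$ (red $C_5$ plus blue $C_5$; red $C_5$ plus blue $2K_3$; red $2K_3$ plus blue $2K_3$), with the third configuration being the critical one since together red and blue there cover up to $12$ vertices, matching the claimed bound.
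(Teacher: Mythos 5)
Your proposal assembles the correct toolkit (Fact~\ref{Fact:3K2}, Claim~\ref{Clm:C5}, Claim~\ref{Claim:F1}, Fact~\ref{Fact:K7}, and $R(F_3,F_3)=13$) and the correct overall shape (delete a few vertices from $X_1$, then invoke a Ramsey-type claim), but it stops exactly where the actual work is: you raise the counting problem and never resolve it. The resolution is that you should not try to cover colors $4$ and $5$ at all. Claim~\ref{Claim:F1} is stated for a $3$-colored $K_9$ in which two of the colors are \emph{already present but confined} to subgraphs of $C_4$, $C_5$, or $2K_3$; since the red and blue subgraphs of $H_1$ (hence of $X_1$) already satisfy this restriction, it suffices to delete vertices covering only the colors $2$ and $3$ inside $X_1$. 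By Claim~\ref{Clm:C5} this costs at most $4$ vertices, leaving a subgraph of $X_1$ of order at least $13-4=9$ whose edges use only colors $1$, $4$, $5$ with colors $4$ and $5$ still confined as required, so Claim~\ref{Claim:F1} produces a copy of $F_3$ in color $1$, the desired contradiction.

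Your stated obstacle --- that a ``naive vertex-cover bound of $4\cdot 4=16$ is too weak'' --- is a symptom of planning to delete covers for all four restricted colors, which is the wrong move and is precisely what Claim~\ref{Claim:F1} lets you avoid. Likewise, the three-way case split on the red/blue configuration of $H_1$ that you anticipate is not needed for this claim; the paper's proof is two sentences and is uniform over those configurations. As written, the proposal is a research plan rather than a proof: the single quantitative step that makes $12$ the right threshold (at most $4$ deletions, at least $9$ survivors, apply Claim~\ref{Claim:F1}) is missing, so I would count this as a genuine gap rather than a complete argument.
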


\begin{proof}
If $|X_1|\geq 13$, then applying Claim~\ref{Clm:C5}, by deleting at most $4$ vertices, there is a subgraph of $X_{1}$ in which no edges with colors $2$ or $3$ appear. Since $|X_1|-4\geq 9$, it follows from Claim~\ref{Claim:F1} that this subgraph contains a monochromatic copy of $F_3$, a contradiction.
\end{proof}

\setcounter{case}{0}
\begin{case}
$b = 1$.
\end{case}

Let $A$ be the set of parts with edges with color $2$ to $X_1$ and $B$ be the set of parts with edges with color $3$ to $X_1$. By Claim~\ref{Claim:12}, we have $|X_1|\leq 12$, so $|A|\geq 13$ or $|B|\geq 13$. Without loss of generality, let $|A|\geq 13$. By deleting at most $4$ vertices from $A$, there is a subgraph of $A$ of order at least $9$ with no edges of color $2$. Since each part within $A$ has order at most $2$ and all edges between these parts have color $3$, $A$ clearly contains a copy of $F_3$ in color $3$, a contradiction.

\begin{case}
$b = 2$.
\end{case}

Assume that the edges from $X_1$ to $X_2$ have color $2$. By Claim~\ref{Claim:12}, we know that $|X_1|\leq 12$ and $|X_2|\leq 12$. Let $Y$ be the set of parts with edges with color $2$ to $X_1$ and $Z$ be the set of parts with edges with color $3$ to $X_1$. Let $Y'$ be the set of parts with edges with color $2$ to $X_2$ and $Z'$ be the set of parts with edges with color $3$ to $X_2$. If $|Z|\geq 11$, then by deleting at most $4$ vertices from $Z$, we obtain a subgraph of $Z$ in which there are no edges with color $3$. Since $Z$ contains only small parts and all edges in between these parts have color $2$, $Z$ contains a copy of $F_3$ with color $2$, a contradiction. This means that $|Z| \leq 10$ and symmetrically, $|Y|, |Y'|, |Z'| \leq 10$. Since $|X_{1}| + |X_{2}| \leq 24$, this also means that $|Y|\geq 3$, $|Z|\geq 3$, $|Y'|\geq 3$ and $|Z'|\geq 3$.

\begin{claim}\labelz{Claim:X1X2-10}
If $|Y|\geq 3$, $|Z|\geq 3$, $|Y'|\geq 3$ and $|Z'|\geq 3$, then
$|X_1|\leq 10$ and $|X_2|\leq 10$.
\end{claim}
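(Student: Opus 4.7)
The plan is to prove $|X_1|\leq 10$; the bound $|X_2|\leq 10$ will then follow by the symmetric argument with $Y',Z'$ in place of $Y,Z$. Suppose for contradiction that $|X_1|\geq 11$.

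The first step is to pin down the structure of each colour class inside $X_1$. Using $|Y|\geq 3$, pick three vertices $u_1,u_2,u_3\in Y$, each joined to every vertex of $X_1$ by a color-$2$ edge. Adapting Fact~\ref{Fact:3K2} and Claim~\ref{Clm:C5}: any color-$2$ copy of $3K_2$ in $X_1$ would, together with $u_1$, form a color-$2$ copy of $F_3$ centred at $u_1$; and any vertex $v\in X_1$ with three color-$2$ neighbours $x_1,x_2,x_3\in X_1$ would be the centre of a color-$2$ copy of $F_3$ via the three disjoint color-$2$ edges $x_1u_1$, $x_2u_2$, $x_3u_3$. Hence the color-$2$ subgraph of $X_1$ has maximum degree at most $2$ and contains no $3K_2$, forcing it to be a subgraph of $C_4$, $C_5$, or $2K_3$. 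The identical argument applied with $|Z|\geq 3$ gives the same conclusion for color $3$. Finally, since $X_1\subseteq H_1$ and the enumerated cases for $H_1$ list the color-$4$ and color-$5$ subgraphs as subgraphs of $C_4$, $C_5$, or $2K_3$, the same holds for their restrictions to $X_1$.

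With all four colors $2,3,4,5$ restricted inside $X_1$ to subgraphs of $C_4$, $C_5$, or $2K_3$, each has a vertex cover of size at most $4$. Deleting a vertex cover for color $4$ and one for color $5$ yields $X_1'\subseteq X_1$ with $|X_1'|\geq |X_1|-8$ in which only colors $1,2,3$ appear; colors $2$ and $3$ remain subgraphs of $C_4$, $C_5$, or $2K_3$ inside $X_1'$. Then Claim~\ref{Claim:F1} immediately produces a monochromatic $F_3$ as soon as $|X_1'|\geq 9$, i.e.\ as soon as $|X_1|\geq 17$.

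The hard part will be closing the gap between this easy bound of $17$ and the target $10$. To bridge it I would refine the argument by a case analysis on the actual configuration of each of the four restricted colors inside $X_1$: when a colour attains $2K_3$, Facts~\ref{Fact:Deg2} and~\ref{Fact:2disjoint} force the other three restricted colours to be substantially smaller (and in particular to share their supports with the $2K_3$), so their joint vertex covers can be chosen to overlap and the total deletion is strictly less than $4+4$; when the restricted colours only realise $C_4$ or a $P_3$, many vertices of $X_1$ carry no edge at all in colours $2,3,4,5$, and since every vertex of $X_1$ has combined degree at most $8$ in these four colours, any such high-color-$1$-degree vertex serves as the centre of a color-$1$ copy of $F_3$ (its color-$1$ neighbourhood having order $\geq |X_1|-9\geq 2$ and the remaining colours being too sparse to block three disjoint color-$1$ edges in it). I expect the main obstacle to be the combinatorial bookkeeping required to enumerate the compatible joint configurations of $(C_4,C_5,2K_3)$ across the four colors $2,3,4,5$ on a common vertex set of size $\geq 11$, and to show that in every such configuration the above refined deletion (or direct centre-finding) argument fits inside $|X_1|\leq 10$.
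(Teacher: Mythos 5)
Your first step is sound and matches the paper: using $|Y|,|Z|\geq 3$ together with Fact~\ref{Fact:3K2} and the degree argument of Claim~\ref{Clm:C5}, the subgraphs of $X_1$ in colors $2$ and $3$ are subgraphs of $C_4$, $C_5$, or $2K_3$, and the same holds for colors $4$ and $5$ by the standing assumptions on $H_1$. But from there your argument only rigorously delivers $|X_1|\leq 16$, and the plan you sketch for closing the gap to $10$ does not work as stated. In particular, the observation that every vertex of $X_1$ has combined degree at most $8$ in colors $2,3,4,5$ gives a color-$1$ neighbourhood of order at least $|X_1|-9\geq 2$, which is nowhere near enough to locate three disjoint color-$1$ edges for an $F_3$; and the proposed enumeration of compatible joint configurations of $(C_4,C_5,2K_3)$ across four colors, with overlapping vertex covers, is left entirely unexecuted. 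As written, the proof establishes a weaker bound and then gestures at a completion whose central step is quantitatively insufficient.

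The idea you are missing is to apply Theorem~\ref{Thm:G-Part} to $X_1$ itself rather than deleting vertices. Since $X_1$ contains no rainbow triangle, it has its own Gallai partition; because each of colors $2,3,4,5$ inside $X_1$ is a subgraph of $C_4$, $C_5$, or $2K_3$ (maximum degree $2$, at most $6$ vertices), none of these colors can serve as a between-parts color of a nontrivial partition of a set of $11$ or more vertices, so all edges between the parts of this partition have color $1$. Moreover each part can only use colors $2$ through $5$ internally and must itself avoid rainbow triangles, which (given the $C_4/C_5/2K_3$ constraints) forces every part to have order at most $5$. With $|X_1|\geq 11$ this yields at least $3$ parts, all joined pairwise in color $1$, and a vertex in one part together with three disjoint color-$1$ edges among the remaining (at least $6$) vertices produces a color-$1$ copy of $F_3$ --- the contradiction. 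This structural route is what gets the bound all the way down to $10$; your deletion-based approach cannot reach it without the configuration analysis you defer.
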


\begin{proof}
Assume, to the contrary, that $|X_1|\geq 11$. The subgraph of $X_{1}$ induced by the edges with each color $i$ for $2 \leq i \leq 5$ is a subgraph of one of $C_5$ or $C_4$ or $2C_3$. Since $X_{1}$ contains no rainbow triangle, there is a Gallai partition of the vertices of $X_{1}$ in which all edges between the parts have color $1$. 
Recall that the subgraphs of $H_{1}$ of colors $4$ and $5$ are each subgraphs of $C_{5}$ or $2K_{3}$ and by Claim~\ref{Clm:C5}, the subgraphs of $X_{1}$ of colors $2$ and $3$ are each subgraphs of either $C_{5}$, $C_{4}$, or $2K_{3}$. To avoid a rainbow triangle, these must either share vertices (for example, two complementary copies of $K_{5}$) or be vertex disjoint with all edges of color $1$ in between. 
Therefore, each part of this partition has order at most $5$, meaning that there are at least $3$ parts. Hence, there is a copy of $F_3$ in color $1$, a contradiction.
\end{proof}

We may therefore assume that $|X_{1}|, |X_{2}| \leq 10$. Indeed, in the proof above, if $|X_{1}| \in \{9, 10\}$, then the subgraphs of $X_{1}$ in color $i$ with $2 \leq i \leq 5$ are very restricted to avoid having $3$ parts in the Gallai partition (with all edges of color $1$ in between the parts. This observation is used in the following proof.

\begin{claim}\labelz{Claim:X2-6}
If $|X_1| \geq 9$, then $|X_2|\leq 6$.
\end{claim}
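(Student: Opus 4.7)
The plan is to suppose for a contradiction that $|X_1| \geq 9$ and $|X_2| \geq 7$ and to produce a monochromatic copy of $F_3$. The approach parallels that of Claim~\ref{Claim:X1X2-10}, but must succeed at the lower threshold $|X_1| \geq 9$, so the argument needs to extract more from the joint structure of $X_1$ and $X_2$ than from $X_1$ alone.

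First I would record the constraints available inside $X_2$. Since all edges between $X_1$ and $X_2$ have colour $2$ and $|X_1| \geq 3$, Claim~\ref{Clm:C5} forces the colour~$2$ subgraph of $X_2$ to be contained in $C_4$, $C_5$, or $2K_3$. Because $X_2 \subseteq H_1$ and the red/blue subgraphs of $H_1$ are themselves subgraphs of $C_5$ or $2K_3$, the same restriction holds for colours $4$ and $5$ within $X_2$. Analogous restrictions hold inside $X_1$, with the possible addition of colour~$3$ if $X_1$ sends colour~$3$ to some other part of the inner partition.

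Next I would replay, now for $X_2$, the Gallai--partition argument used in the proof of Claim~\ref{Claim:X1X2-10}. Since every colour other than $1$ induces a graph of maximum degree at most $2$ inside $X_2$, the two between--colours of any Gallai partition of $X_2$ must include colour~$1$; otherwise there are simply too few candidate edges to support such a partition. This inner partition of $X_2$ therefore has at most two parts (three or more parts joined by colour~$1$ would produce a copy of $F_3$ in colour~$1$), each of size at most~$5$, so $|X_2| \leq 10$. A symmetric statement holds for $X_1$, giving $|X_1| \leq 10$ and a rigid two--block structure on each side.

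Finally I would use the colour~$2$ bridge between $X_1$ and $X_2$. By Fact~\ref{Fact:Deg2}, if any vertex of $X_2$ has colour~$2$ degree at least $2$ inside $X_2$ then $X_1$ contains no colour~$2$ edge, and vice versa; Fact~\ref{Fact:2disjoint} further forbids a pair of disjoint colour~$2$ edges on one side whenever the other side already carries one. I would then enumerate the possibilities for the colour~$2$ subgraph of $X_2$ (empty, a star, a matching, or a copy of $C_4$, $C_5$, or $2K_3$) and, in each case, delete at most four vertices of $X_2$ to eliminate all colour~$2$ edges. With $|X_2| \geq 7$ the resulting subgraph is a $2$--coloured complete graph of order at least $7$ in which one colour (now $4$ or $5$) is still a subgraph of $C_4$, $C_5$, or $2K_3$, so Fact~\ref{Fact:K7} yields the desired monochromatic $F_3$.

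The main obstacle is the last step. One has to juggle three layers of constraints simultaneously -- from the outer partition into the $H_i$'s (governing colours $4$ and $5$), from the inner partition into the $X_i$'s (governing colours $2$ and $3$), and from the sub-partition of $X_2$ (governing colour~$1$) -- and argue that every possible shape of the colour~$2$ subgraph on $X_2$ either collapses immediately via Fact~\ref{Fact:2disjoint} or leaves behind a $2$--coloured $K_7$ with one colour sufficiently sparse to apply Fact~\ref{Fact:K7}. The tightest case is the one in which $X_2$ still carries a full $C_5$ in colour~$2$ together with edges in both of colours $4$ and $5$, and it is precisely the balancing there that forces the bound $|X_2| \leq 6$.
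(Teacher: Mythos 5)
There is a genuine gap in the final step, and it is exactly the step you flag as ``the main obstacle.'' Two things go wrong. First, the arithmetic: deleting up to four vertices from $X_2$ with $|X_2|\geq 7$ leaves a complete graph on at least $3$ vertices, not at least $7$, so Fact~\ref{Fact:K7} cannot be invoked on what remains. Second, even ignoring the count, the surviving graph is not $2$-colored: after removing the color-$2$ edges, $X_2$ may still carry edges in each of colors $1$, $3$, $4$, and $5$, and Fact~\ref{Fact:K7} requires that \emph{all but one} of the non-dominant colors be entirely absent, not merely that each be a subgraph of $C_4$, $C_5$, or $2K_3$. Your preliminary observations only establish sparseness of colors $2,4,5$ inside $X_2$, never their absence, so the case ``$X_2$ carries edges in several of the colors $3,4,5$ simultaneously'' is not closed.

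The missing idea is that the hypothesis $|X_1|\geq 9$ must be used to rigidify $X_1$, not just $X_2$. In the paper's argument, the Gallai-partition analysis from Claim~\ref{Claim:X1X2-10} shows that a part of order $9$ or $10$ in which every color $2,3,4,5$ induces a subgraph of $C_4$, $C_5$, or $2K_3$ must decompose into at most two blocks of order at most $5$, forcing $X_1$ to contain two complementary monochromatic copies of $C_5$ in colors drawn from $\{2,3,4,5\}$ (plus, when $|X_1|=9$, a $K_4$ in the remaining colors). This \emph{saturates} the color classes: since the color-$4$ and color-$5$ subgraphs of all of $H_1$ are subgraphs of $C_5$ or $2K_3$, and since $X_1$ already realizes such a subgraph in each of colors $4$ and $5$, the part $X_2$ can contain \emph{no} edge of color $4$ or $5$; likewise the $C_5$ of color $2$ in $X_1$ together with Fact~\ref{Fact:2disjoint} across the color-$2$ join between $X_1$ and $X_2$ kills every color-$2$ edge of $X_2$. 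Only color $3$ survives in $X_2$, and it is a subgraph of $C_4$, $C_5$, or $2K_3$ by Claim~\ref{Clm:C5}, so Fact~\ref{Fact:K7} applies directly to $X_2$ itself (order $\geq 7$, no deletions needed) and yields a copy of $F_3$ in color $1$. Your proposal never extracts this forced structure of $X_1$, which is why the threshold $|X_1|\geq 9$ plays no real role in your argument and the final case analysis cannot be completed.
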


\begin{proof}
Assume, to the contrary, that $|X_2|\geq 7$. If $|X_1|=10$, then by the arguments in the proof of Claim~\ref{Claim:X1X2-10}, the subgraph of $X_{1}$ in each of the colors $2,3,4,5$ must be a copy of $C_{5}$, forming two copies of $K_{5}$, each consisting of complementary monochromatic copies of $C_{5}$ in pairs of these colors. By Claim~\ref{Clm:C5}, since there is a part in $G$ with all edges of color $4$ to $X_{1} \cup X_{2}$ and a part in $G$ with all edges in color $5$ to $X_{1} \cup X_{2}$, there can be no edges of color $4$ or $5$ within $X_{2}$. By Fact~\ref{Fact:2disjoint}, there can be no edge with color $2$ in $X_2$. Even if there are edges of color $3$ within $X_{2}$, by Claim~\ref{Clm:C5}, these must form a subgraph of $C_{4}$, $C_{5}$, or $2K_{3}$. Then by Fact~\ref{Fact:K7}, there is a copy of $F_{3}$ in color $1$ within $X_{2}$, for a contradiction. Similarly, if $|X_1|=9$, then again using the arguments from the proof of Claim~\ref{Claim:X1X2-10}, $X_1$ must contain two (complementary) $5$-cycles using two colors from $2,3,4,5$, and a complete graph on $4$ vertices colored with the remaining colors from $\{2,3,4,5\}$, and hence there are again no edges with a color in $\{2, 4, 5\}$ in $X_2$ and the edges of color $3$ are restricted to subgraphs of $C_{4}$, $C_{5}$, and $2K_{3}$. By Fact~\ref{Fact:K7}, there is again a copy of $F_{3}$ in color $1$ as a subgraph of $X_{2}$, a contradiction.
\end{proof}

From Claim~\ref{Claim:X2-6}, we have $|X_1|+|X_2|\leq 16$ so this means that $|Y|\geq 11$ or $|Z|\geq 11$, say $|Y|\geq 11$. By deleting at most $4$ vertices from $Y$, we obtain a subgraph of $Y$ with no edges of color $2$. Since $Y$ consists of small parts of the Gallai partition of $H_{1}$ and all edges between the parts in this subgraph of $Y$ have color $3$, this subgraph contains a copy of $F_3$ in color $3$, a contradiction.

\begin{case}
$b = 3$.
\end{case}

In order to avoid a monochromatic triangle in the reduced graph restricted to the large parts, we assume that the edges from $X_1$ to $X_2\cup X_3$ have color $2$ and the edges from $X_2$ to $X_3$ have color $3$. In order to avoid a monochromatic triangle in the reduced graph using two large parts, for each part $X_i$ with $4\leq i\leq a$, the edges from $X_i$ to $X_1$ have color $3$. Let $X=\bigcup_{i=4}^aX_i$. By Claim~\ref{Claim:X2-6} (note that Claims~\ref{Claim:X1X2-10} and~\ref{Claim:X2-6} can be applied to any pair of the parts $X_{1}, X_{2}, X_{3}$ here), we have $|X_{1}| + |X_2| + |X_3|\leq 24$. This means $|X| \geq 13$. By Claim~\ref{Clm:C5}, we can delete at most $4$ vertices from $X$ to obtain a subgraph of $X$ with no edges of color $3$. Since $X$ consists entirely of small parts of the partition and the edges between these parts within the aforementioned subgraph are colored entirely with color $2$, this subgraph of $X$ contains a copy of $F_{3}$ in color $2$.

\begin{case}
$b = 4$.
\end{case}

In order to avoid a monochromatic triangle within the reduced graph restricted to the $4$ largest parts, up to symmetry, we may assume that all edges from $X_{i}$ to $X_{i + 1}$ have color $2$ for $1 \leq i \leq 3$ and all remaining edges between these large parts have color $3$. By Claim~\ref{Claim:X1X2-10}, we have $|X_{i}|\leq 10$ for $1\leq i\leq 4$. In order to avoid creating a monochromatic triangle in the reduced graph using two of the large parts, all small parts must have edges of color $2$ to $X_{1} \cup X_{4}$ and edges of color $3$ to $X_{2} \cup X_{3}$. Thus, by the minimality of $a$, we may assume that $4\leq a\leq 5$ so there is at most one small part. If there exists some $X_{j}$ such that $|X_{j}|\geq 9$, then $|X_{i}|\leq 6$ for $1\leq i\neq j\leq a$. In this case, $$
|H_1|=\sum_{i=1}^{a} |X_i| \leq 10 + 3 \cdot 6 + 2 = 30 < 37,
$$
a contradiction. On the other hand, if $|X_{i}|\leq 8$ for all $1 \leq i \leq 4$, then 
$$
|H_1|=\sum_{i=1}^{a} |X_i| \leq 4\cdot 8 + 2 = 34 < 37,
$$
again a contradiction.

\begin{case}
$b = 5$.
\end{case}

Then $a=5$ with all large parts and in order to avoid a monochromatic triangle in the reduced graph, the reduced graph must be the unique $2$-colored $K_{5}$ consisting of a cycle say $X_{1}X_{2}X_{3}X_{4}X_{5}X_{1}$ with color $2$ and a complementary cycle with color $3$.  From Claim~\ref{Claim:X1X2-10}, $|X_{i}|\leq 10$ for all $1\leq i\leq 5$. If there exists some part $X_{j}$ with $|X_{j}|\geq 9$, then $|X_{i}|\leq 6$ for $1\leq i\neq j\leq 5$, and hence
$$
|H_1| = \sum_{i=1}^{a} |X_i| \leq 10 + 4\cdot 6 = 34<37,
$$
a contradiction. Thus, we may assume that $|X_{i}|\leq 8$ for $1 \leq i \leq 5$, say with $|X_{1}|\geq |X_{2}|\geq |X_{3}|\geq |X_{4}|\geq 7$. For $1 \leq i \leq 4$, by the arguments leading up to Fact~\ref{Fact:K7}, in order to avoid a copy of $F_{3}$ in color $1$ within $X_{i}$, each part $X_{i}$ contains at least $3$ disjoint edges of colors from $\{2, 3, 4, 5\}$ for a total of at least $12$ such edges. Since there are at most $8$ such edges (two for each of these colors), this is a contradiction, completing the proof of this last case and the proof of Lemma~\ref{Lem:F3}.
\end{proof}

\end{appendices}

\end{document}